\documentclass[12pt,leqno,draft]{article} 
\usepackage{amsmath,amssymb,amsthm,amsfonts, indentfirst}
\usepackage{enumerate,color,bm}
\topmargin=-1cm
\oddsidemargin=0cm
\pagestyle{plain}
\textwidth=16cm
\textheight=23cm

\makeatletter
\def\@cite#1#2{[{{\bfseries #1}\if@tempswa , #2\fi}]}
\renewcommand{\section}{%
\@startsection{section}{1}{\z@}
{0.5truecm plus -1ex minus -.2ex}%
{1.0ex plus .2ex}{\bfseries\large}}
\def\@seccntformat#1{\csname the#1\endcsname.\ }
\makeatother

\setlength\arraycolsep{2pt}

\numberwithin{equation}{section} 
\pagestyle{plain}
\newtheorem{thm}{Theorem}[section]
\newtheorem{corollary}[thm]{Corollary}
\newtheorem{lem}[thm]{Lemma}
\newtheorem{prop}[thm]{Proposition}
\theoremstyle{definition}

\newtheorem{remark}{Remark}[section] 

\newcommand{\pa}{\partial}
\newcommand{\Rn}{\mathbb{R}^n}

\newcommand{\N}{\mathbb{N}}

\newcommand{\tmax}{T_{\rm max}}
\newcommand{\lp}[2]{\|#2\|_{L^{#1}(\Omega)}}
\newcommand{\io}{\int_\Omega}

\begin{document}
\footnote[0]
    {
    2010{\it Mathematics Subject Classification}\/. 
    Primary: 35B44; Secondary: 35K65, 92C17.
    }
\footnote[0]
    {
    {\it Key words and phrases}\/: 
    degenerate chemotaxis system; flux limitation; finite-time blow-up. 
    }

\begin{center}
    \Large{{\bf 
    Finite-time blow-up in a quasilinear 
    degenerate chemotaxis system with flux limitation    
    }}
\end{center}
\vspace{5pt}
\begin{center}
     Yuka Chiyoda, 
    Masaaki Mizukami\footnote{Corresponding author}\footnote{Partially supported by JSPS Research 
    Fellowships for Young Scientists, No.\ 17J00101}, 
    Tomomi Yokota\footnote{Partially supported by Grant-in-Aid for
    Scientific Research (C), No.\ 16K05182.}
   \footnote[0]{
    E-mail:  
    {\tt g11914oboe@gmail.com},
    {\tt masaaki.mizukami.math@gmail.com},
    }\footnote[0]{ 
    {\tt yokota@rs.kagu.tus.ac.jp}
    }\\
    \vspace{12pt}
    Department of Mathematics, 
    Tokyo University of Science\\
    1-3, Kagurazaka, Shinjuku-ku, 
    Tokyo 162-8601, Japan\\
    \vspace{2pt}
\end{center}
\begin{center}    
    \small \today
\end{center}

\vspace{2pt}
%
%
%
\newenvironment{summary}
{\vspace{.5\baselineskip}\begin{list}{}{%
     \setlength{\baselineskip}{0.85\baselineskip}
     \setlength{\topsep}{0pt}
     \setlength{\leftmargin}{12mm}
     \setlength{\rightmargin}{12mm}
     \setlength{\listparindent}{0mm}
     \setlength{\itemindent}{\listparindent}
     \setlength{\parsep}{0pt}
     \item\relax}}{\end{list}\vspace{.5\baselineskip}}
\begin{summary}
{\footnotesize {\bf Abstract.} 
This paper deals with the quasilinear degenerate chemotaxis system 
with flux limitation 
    	\begin{align*}
    	\begin{cases}
         u_t = \nabla\cdot\left(\dfrac{u^p
         \nabla u}{\sqrt{u^2 + |\nabla u|^2}}
         \right) -\chi \nabla\cdot\left(
         \dfrac{u^q\nabla v}{\sqrt{1 + 
         |\nabla v|^2}}\right),
         &x\in \Omega,\ t>0,
         \\
         0 = \Delta v - \mu + u, 
         &x\in \Omega,\ t>0,  
         \end{cases}
        \end{align*} 
where $\Omega := B_R(0) \subset \mathbb{R}^n$ ($n \in \N$) 
is a ball with some $R>0$, and $\chi >0$, $p,q\geq1$, 
$\mu := \frac 1{|\Omega|} \io u_0$ 
and $u_0$ 
 is an initial data of an unknown function $u$. 
Bellomo--Winkler (Trans.\  Amer.\  Math.\  Soc.\  Ser.\  B;2017;4;31--67) 
established existence of an initial data such that the 
corresponding solution 
blows up in finite time when $p=q=1$. This paper gives 
existence of blow-up 
solutions under some condition for $\chi$ and $u_0$ 
when $1\leq p\leq q$.
}
\end{summary}
\vspace{10pt}
\newpage
\section{Introduction} \label{Sec1}
In this paper we consider the quasilinear 
degenerate chemotaxis system with flux limitation:
\begin{align}\label{P}     
   \begin{cases}
         u_t = \nabla\cdot\left(\dfrac{u^p
         \nabla u}{\sqrt{u^2 + |\nabla u|^2}}
         \right) -\chi \nabla\cdot\left(
         \dfrac{u^q\nabla v}{\sqrt{1 + 
         |\nabla v|^2}}\right),
         &x\in \Omega,\ t>0,
\\[2mm]
         0 = \Delta v - \mu + u, 
         &x\in \Omega,\ t>0,         
\\[2mm]
          \left(\dfrac{u^p\nabla u}{\sqrt{u^2 +
          |\nabla u|^2}} - \chi
          \dfrac{u^q\nabla v}{\sqrt{1 +
          |\nabla v|^2}}\right)\cdot\nu = 0, 
         &x \in \partial\Omega,\ t>0,
\\[5mm]
         u(x,0) = u_0(x),
         &x \in \Omega,
   \end{cases}
 \end{align}
 where $\Omega = B_R(0)\subset\Rn (n\in \N)$ is a ball 
with some $R > 0$, $\chi > 0$, $p,q\geq 1$ and the 
initial data $u_0$ is a function fulfilling that 
 \begin{align}\label{condi;ini1}
        u_0\in C^3 (\overline{\Omega})\  \mbox{is radially 
        symmetric and positive in}\  \overline{\Omega}\ \mbox{with}\ 
        \frac{\pa u_0}{\pa \nu} = 0\ \mbox{on}\ \pa\Omega
 \end{align}
 and where
 \begin{align}\label{def;mu}
        \mu := \frac{1}{|\Omega|}
        \int_\Omega{u_0(x)\,dx}.
 \end{align}
The system \eqref{P} 
represents the situation such that 
a cellular slime moves towards higher concentrations of the 
chemical substance, and the unknown function $u=u(x,t)$ 
describes the density of  cell and the unknown function 
$v=v(x,t)$ 
denotes the concentration of chemoattractant at $x\in \Omega$ 
and $t\ge 0$.   
This model is development of the chemotaxis system 
\begin{align}\label{KS}
u_t = \Delta u -\nabla\cdot (u \nabla v), \quad v_t=\Delta v-v+u; 
\end{align} 
thanks to effect of the flux limitation, the system \eqref{P} 
describes the case that cell diffusivity is suppressed; 
therefore the system \eqref{P} is innovative and important 
because it 
is considered a sensitive dynamics in aggregation phenomena. 

Before we introduce previous works about the system 
 \eqref{P}, 
we will recall known results about the chemotaxis system \eqref{KS}: 

The system \eqref{KS} is proposed by Keller--Segel \cite{KS} 
and is called a Keller--Segel system. About the Keller--Segel system 
it was known that the size of the initial data in some Lebesgue norm 
determines behaviour of solutions; 
in the case that $n=1$, Osaki--Yagi \cite{OY} obtained global existence 
and boundedness of classical solutions of \eqref{KS}; 
in the case that $n=2$, it is shown that there is a critical value 
$C>0$ ($C=8\pi$ in the radial setting and $C=4\pi$ in the other setting) 
such that, 
if $\lp{1}{u_0} < C$ then global solutions exist (\cite{NSY}), and 
if $m>C$ then there is an initial data satisfying that $\lp{1}{u_0}=m$ and 
the corresponding solution blows up in finite time (\cite{HW,MW}); 
in the case that $n\geq3$, Horstmann--Winkler \cite{HWI} 
asserted possibility of existence of unbounded solutions; 
Winkler \cite{W} showed that for all $m>0$ there exists an initial data 
such that $\lp1{u_0}=m$ 
and the corresponding solution blows up in finite time; 
also in the case that $n=3$, 
Cao \cite{C} established global existence and  boundedness 
under the condition that $\lp {\frac{n}{2}}{u_0}$ and 
$\lp n{\nabla v_0}$ are sufficiently small. 

The Keller--Segel system \eqref{KS} is now studied by 
many mathematicians intensively. 
Moreover, many variations of generalizations of 
the Keller--Segel system \eqref{KS} are 
 also sprightly studied. 
Here one of the important generalized problems is 
the degenerate chemotaxis system 
\[
u_t = \Delta u^p-\nabla\cdot (u^{q-1}\nabla v), \quad v_t=\Delta v-v+u,
\]
where $p\geq1$, $q\geq2$. This problem is one of the model which 
has a nonlinear diffusion suggested by Hillen--Painter \cite{HP}, 
and it is shown that the relation between $p$ and $q$ 
determines behaviour of solutions; 
in the case $q<p+\frac{2}{n}$, global solutions are 
obtained when $\Omega$ is a bounded 
domain (see \cite{IY}). In the case $q=p+\frac{2}{n}$, the result is divided 
by the size of the initial data with some critical mass $m_c=m_c(n)$. 
When 
 $\lp1{u_0}=m<m_c$ 
and $q=2$, under the 
Dirichlet--Neumann boundary condition, 
Mimura \cite{M} 
showed that there are global 
solutions when $\Omega$ is a bounded domain. 
On the other hand, if $m>m_c$, 
then Lauren\c{c}ot--Mizoguchi \cite{LM} established that 
existence of an initial data such that $\lp1{u_0}=m$ 
and the corresponding 
solution blows 
up in finite time when $q=2$, $\Omega=\Rn$ and $n=3,4$. 
In the case $q>p+\frac{2}{n}$, it is known that there exists an initial 
data such that 
the corresponding solution blows up in finite time 
(see \cite{HIY}). 
Thus, it is clear that the relation between $p,q$ and $n$ 
strongly affects behaviour of solutions. 

On the other hand, 
the system which describes the situation such that 
the movement of the species is suppressed, that is, 
the chemotaxis system with flux limitation 
\begin{align}\label{P0}
    	\begin{cases}
         u_t = \nabla\cdot\left(D_u(u,v)\dfrac{u
         \nabla u}{\sqrt{u^2 + |\nabla u|^2}}
         \right) - \nabla\cdot\left(S(u,v)
         \dfrac{u\nabla v}{\sqrt{1 + 
         |\nabla v|^2}}\right)+H_1(u,v),
     & 
         \\[5mm]
         v_t = D_v\Delta v + H_2(u,v),
       \qquad  x\in \Omega,\ t>0   &
         \end{cases}
        \end{align}
is proposed by Bellemo--Winkler \cite{BW0}, 
where $D_u$ and $D_v$ show 
properties of 
diffusion of the species and the chemoatractant, respectively, 
and $S$ represents the chemotactic interaction, 
and $H_1$ and $H_2$ are mechanisms of propagation, 
degeneration, and interaction. 
Since it has not been known whether there exist valid 
functions 
like an energy function and a Lyapunov function yet, 
the system \eqref{P0} seems to be difficult. 
Therefore, Bellomo--Winkler 
\cite{BW0,BW} have considered the following 
 simplified system 
\begin{align}\label{P1}     
         u_t = \nabla\cdot\left(\dfrac{u
         \nabla u}{\sqrt{u^2 + |\nabla u|^2}}
         \right) -\chi \nabla\cdot\left(
         \dfrac{u\nabla v}{\sqrt{1 + 
         |\nabla v|^2}}\right),\quad  
        0 = \Delta v - \mu + u.         
 \end{align}
In this system, Bellomo--Winkler \cite{BW0} overcame the 
difficulty, and they showed   
global existence when $\chi<1$. 
On the other hand,  if $\chi>1$ and
\begin{align}\label{condi;m}
  \begin{cases}
    m> \dfrac{1}{\sqrt{\chi^2 -1}} & \mbox{if} \ n=1, 
\\[5mm]  
    m> 0 \ \mbox{is arbitrary} & \mbox{if} \ n\ge 2, 
  \end{cases}
\end{align}
Bellomo--Winkler \cite{BW} found an initial data such that 
the corresponding solution of \eqref{P1} blows up in finite time.
%
However, the problem \eqref{P0} has not been 
studied yet when $D_u$ and $S$ are general; 
in view of the study of the Keller--Segel system, 
the case that $D_u(u,v)=u^{p-1}$ and $S(u,v)=u^{q-1}$ 
in 
\eqref{P0}, i.e., the system \eqref{P} seems to 
be 
one of important problems. 
 Recently global existence of solutions to system \eqref{P} 
was shown  
when $p > q+1-\frac{1}{n}$ (see \cite{OMY}). 
From the results in the degenerate chemotaxis system 
we can expect that some largeness condition for $q$ 
derives existence of blow-up solutions. 

The purpose of this paper is to determine 
the condition for $p$ and $q$ such that the corresponding 
solution blows up in finite time. 
Here we need to establish 
different methods because 
we cannot 
adopt the same argument as 
 in \cite{BW} when $p<q$ holds.

Now main 
results read as follows.

%
%
%
%
\begin{thm}\label{mainthm1}
Let $\Omega := B_R(0)\subset\Rn$ $(n\in \N)$ with $R > 0$ 
and suppose that  $1\leq p \le  q$. 
\begin{item}
{\rm (i)} If $n=1$, then for all $\chi > 1$ $(\chi >0$ when $q > p)$, 
there exists $m_c=m_c(\chi,p,q,R) > 0$ with the 
following property\/{\rm :} If 
 \begin{align}\label{condi;mass}
    m > m_c,
 \end{align}
then there exists a nondecreasing function $M_m \in  C^0([0,R])$ 
satisfying  $\sup_{r\in(0,R)}\frac{M_m(r)}{|B_r(0)|}< \infty$, 

$M_m(R)\leq m$, and that for all $u_0 \in C^3(\overline{\Omega})$ 
with $\int_{\Omega}{u_0(x)\,dx} = m$ and
 \begin{align}\label{condi;ini3}
     \int_{B_r(0)}{u_0(x)\,dx}\geq M_m(r)
 \end{align}
for all $r\in[0,R]$, there exists $T^\ast\in(0,\infty)$ 
such that a corresponding solution 
$(u,v)$ of \eqref{P} blows up in 
finite time $T^\ast$ in the sense that
 \begin{align}\label{blow-up}
     \limsup_{t\nearrow T^\ast}\lp{\infty}{u(\cdot,t)}
      = \infty.
 \end{align} 
\end{item}
\begin{item}
{\rm (ii)} If $n \geq 2$ and $m > 0$, then 
for all $\chi >0$ satisfying 
 \begin{align}\label{condi;chi}
     \chi > \left(\frac{mn}{\omega_n R^n}\right)^{p-q}, 
 \end{align}
where $\omega_n$ defines the $(n-1)$-dimensional 
measure of the unit sphere in $\Rn$, there exists a nondecreasing 
function $M_m \in  C^0([0,R])$ satisfying  $\sup_{r\in(0,R)}\frac{M_m(r)}{|B_r(0)|}< \infty$, 
 and $M_m(R)\leq m$, such that for all 
$u_0 \in C^3(\overline{\Omega})$ with 
$\int_{\Omega}{u_0(x)\,dx} = m$ and 
\eqref{condi;ini3} 
 for all $r\in[0,R]$, there exists $T^\ast\in(0,\infty)$ such that 
 the 
  corresponding solution 
$(u,v)$ of \eqref{P} blows up in 
finite time $T^\ast$ in the sense of \eqref{blow-up}.  
\end{item}
\end{thm}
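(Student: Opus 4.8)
The plan is to adapt the moment-type argument of Bellomo--Winkler \cite{BW} to the quasilinear setting. Working in radial coordinates, I introduce the mass accumulation function $w(s,t) := \int_{B_{s^{1/n}}(0)} u(x,t)\,dx$ for $s \in [0,R^n]$; this is the natural change of variables that turns \eqref{P} into a scalar PDE. Differentiating the first equation of \eqref{P} and using the second ($v$ is determined explicitly in terms of $u$, giving $v_r = \frac{1}{r^{n-1}}\bigl(\frac{w}{n\omega_n} - \frac{\mu}{n} s\bigr)$ up to constants, so $\nabla v$ is controlled by $w$ itself), I will derive an evolution inequality for $w$. The degenerate diffusion term $\nabla\cdot\bigl(u^p\nabla u/\sqrt{u^2+|\nabla u|^2}\bigr)$ is, crucially, uniformly bounded in magnitude by roughly $u^{p}$ (since $|\nabla u|/\sqrt{u^2+|\nabla u|^2}\le 1$), so after the substitution the diffusive contribution to $w_t$ is estimated from above by a term that is sublinear/manageable, while the chemotactic term provides the destabilizing superlinear contribution.

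Next I would test $w$ against a suitable weight. Following the Bellomo--Winkler scheme, I consider a functional of the form $\phi(t) := \int_0^{s_0} s^{-a}(s_0 - s)\, w(s,t)\,ds$ (or a close variant) for appropriate $a \in (0,1)$ and small $s_0 \in (0,R^n)$, and compute $\phi'(t)$. The diffusion term, after integration by parts in $s$, yields a contribution bounded below by $-C\phi(t)^{\theta} - C$ for some $\theta<1$ thanks to the flux limitation bound and the hypothesis $\int_{B_r} u_0 \ge M_m(r)$ which gives a pointwise lower bound $w(s,t)\ge M_m(s^{1/n})$ propagated in time (this pointwise lower bound must be justified via a comparison principle for the $w$-equation). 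The chemotactic term produces a contribution of the form $c\,\chi \int_0^{s_0} s^{-a}(s_0-s)\, w\, w_s\, s^{\text{(power)}}\,ds$, which after manipulation and using $w(s,t) \ge M_m(s^{1/n})\gtrsim s$ near $s=0$ (the role of $\sup_r M_m(r)/|B_r|<\infty$ together with the lower bound) dominates and gives $\phi'(t) \ge c_1 \phi(t)^{1+\delta} - c_2$ for some $\delta>0$, forcing blow-up of $\phi$ in finite time; standard arguments then upgrade this to \eqref{blow-up}.

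The key point where $1\le p\le q$ enters, and where the two cases $n=1$ and $n\ge 2$ diverge, is in balancing the powers of $s$ (equivalently of $u$) in the diffusion versus the chemotaxis term: the chemotactic flux carries $u^q$ while the diffusive flux carries only $u^p$ with $p\le q$, so after the substitution the chemotaxis term wins in the regime where $u$ (hence $w/s$) is large, but one must check the exponent bookkeeping to see that the forcing exponent $1+\delta$ exceeds $1$ and that the required smallness of $\chi$ (in dimension $n=1$, $\chi>1$ or the condition on $m_c$; in dimension $n\ge2$, the explicit condition \eqref{condi;chi}) is exactly what makes the leading coefficient $c_1$ positive. In dimension $n\ge 2$ the factor $(mn/\omega_n R^n)^{p-q}$ in \eqref{condi;chi} arises precisely as a lower bound on $u$ (via the mean value $\mu$ and the lower bound on $w$) raised to the power $q-p$, converting the $u^q$ chemotaxis strength into an effective $u^p$-comparable strength times $\chi \cdot u^{q-p} \ge \chi (\mu)^{q-p}$.

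The main obstacle I anticipate is establishing and then exploiting the pointwise lower bound $w(s,t) \ge M_m(s^{1/n})$ for all $t$ in the existence interval, since the equation for $w$ is degenerate (the diffusion coefficient vanishes where $w_s = 0$, i.e.\ where $u=0$ — though positivity of $u_0$ helps, degeneracy can still form) and non-standard due to the flux limitation, so the usual parabolic comparison principle does not apply off the shelf; one likely needs to work with the regularized problems (the $\ep$-approximations presumably constructed earlier via \cite{OMY}-type arguments) and pass to the limit, or construct an explicit subsolution. A secondary technical difficulty is handling the singular weight $s^{-a}$ near $s=0$ together with the degenerate diffusion, ensuring all integrals converge and the integration by parts is legitimate; this is where the constraint $\sup_{r}M_m(r)/|B_r(0)| < \infty$ is used, guaranteeing $w(s,t) \lesssim s$ near the origin so that $\phi(t)$ is finite.
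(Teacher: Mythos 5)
Your proposal takes a genuinely different route from the paper --- a J\"ager--Luckhaus-type moment functional $\phi(t)=\int_0^{s_0}s^{-a}(s_0-s)w\,ds$ driven to blow-up by a superlinear ODE --- whereas the paper follows Bellomo--Winkler: it constructs an explicit, time-dependent subsolution $\underline{w}$ (an exponential profile $\frac{2\lambda}{de^d}(e^{d\xi}-1)$ glued to a hyperbola, with inner scale $K\sqrt{B(t)}$ where $B(t)=(B_0^{1/(2n)}-\tfrac{\kappa}{2n}t)^{2n}\searrow 0$), verifies $\mathcal{P}\underline{w}\le 0$ separately in outer, very inner and intermediate regions, invokes the comparison principle of [\textbf{2}, Lemma 5.1], and concludes from $w_s(\theta(t),t)\ge \lambda A(t)/B(t)\to\infty$. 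Unfortunately, the step on which your entire argument rests --- ``the chemotactic term ... gives $\phi'(t)\ge c_1\phi(t)^{1+\delta}-c_2$'' --- is asserted rather than derived, and it is exactly here that the flux limitation obstructs the classical scheme. In the $w$-equation the chemotactic forcing is
\begin{align*}
n^q\chi\,\frac{\bigl(w-\tfrac{\mu}{n}s\bigr)w_s^{\,q}}{\sqrt{1+s^{\frac{2}{n}-2}\bigl(w-\tfrac{\mu}{n}s\bigr)^2}}
\;\le\; n^q\chi\, s^{1-\frac1n}\,w_s^{\,q},
\end{align*}
i.e.\ the factor carrying $w$ saturates: precisely in the regime where $w$ is large the $w$-dependence cancels against the denominator. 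The quadratic-in-$w$ structure that powers the moment ODE in the classical Keller--Segel setting is therefore absent, and there is no evident way to bound $\int s^{-a}(s_0-s)s^{1-1/n}w_s^q\,ds$ from below by $\phi^{1+\delta}$ without pointwise lower bounds on $w_s=u/n$ that you do not have. Meanwhile the diffusion term, though bounded by $n^p s^{1-1/n}w_s^{\,p}$ in magnitude, is signed and competes with the forcing at the \emph{same} order in $s$ and in $w_s$ (up to the exponent gap $q-p$), so the comparison between them must be done pointwise on a carefully designed profile --- which is what the paper's region-by-region subsolution analysis accomplishes.

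A second, related gap: you propose to propagate the \emph{static} lower bound $w(s,t)\ge M_m(s^{1/n})$ in time and feed it into the functional. Even granting a comparison principle, a time-independent lower bound can never force $\limsup_t\|u(\cdot,t)\|_{L^\infty}=\infty$; the hypothesis \eqref{condi;ini3} is used in the paper only at $t=0$, as the initial ordering $w(\cdot,0)\ge\underline{w}(\cdot,0)$ with $M_m(r):=\omega_n\underline{w}(r^n,0)$, and it is the \emph{concentration} of $\underline{w}$ (through $B(t)\searrow 0$ while $A(t)$ stays bounded below) that produces blow-up. Your instinct that the comparison principle is the delicate point is sound, but the resolution is not regularization: [\textbf{2}, Lemma 5.1] applies directly once one checks $\partial\phi/\partial y_2\ge 0$ and local Lipschitz bounds in $(y_0,y_1)$, as done at the end of Section 4. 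To repair your argument you would essentially have to abandon the moment functional and rebuild the concentrating subsolution, at which point you have reproduced the paper's proof.
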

%
%
%
%
\begin{remark}\label{Re1.5}
This theorem shows existence of blow-up solutions to \eqref{P}  
when $q\ge p$. 
On the other hand, 
in \cite{OMY} 
global existence of solutions is shown 
when $p> q+1-\frac{1}{n}$. 
However, except for the case $n=1$, 
behaviour of solutions in the case $q < p \le q+1-\frac 1n$ 
is still an open problem (see Figure \ref{Graph}).
\begin{figure}[h]
\begin{center}
\input{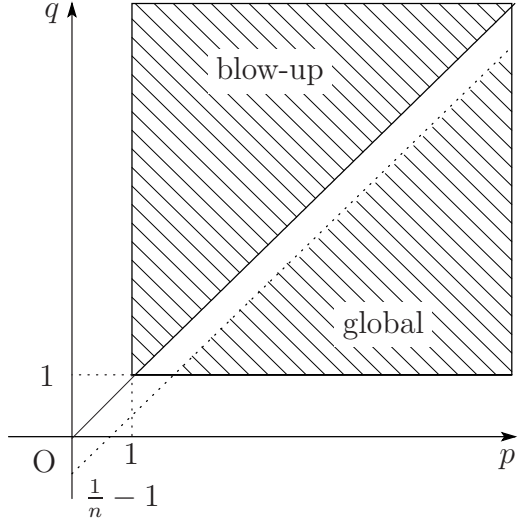}
\end{center}
\caption{Classification of behaviour}
\label{Graph}
\end{figure}
\end{remark}
%
%
%
%
\begin{remark}\label{Re1}
If $p=q=1$, then the 
 condition for $m$ 
connects with that in \cite{BW}; 
indeed, the constant $m_c(\chi,p,q,R)$ 
in \eqref{condi;mass} is given by 
 \[
    m_c=
    \inf \left\{m\  \Bigg|\  \exists\, \lambda \in 
    \left(\frac{5-\sqrt{17}}{2},1\right];\,\frac{\left\{1-\frac{(1-\lambda)^2}
    {3\lambda-1}\right\}m \chi}
    {\sqrt{\frac{1}{\lambda^2}+
    \frac{(1-\lambda)^2}{\lambda^2(3\lambda-1)}
    + m^2}}=\left(\frac{m}{\omega_n}\cdot 
    \frac{(1-\lambda)^2}{\lambda(\lambda+1)R}\right)^{p-q} \right\} 
 \]
(see \eqref{def;ab} and \eqref{condi;mc}). 
Thus, in 
 the case $p=q$, since 
 \begin{align*}
  m_c(\chi,p,p,R)
 & 
  =\inf \left\{m\  \Bigg|\  \exists\, \lambda \in 
  \left(\frac{5-\sqrt{17}}{2},1\right];\,\frac{\left\{1-\frac{(1-\lambda)^2}
  {3\lambda-1}\right\}m \chi}
 {\sqrt{\frac{1}{\lambda^2}+\frac{(1-\lambda)^2}{\lambda^2(3\lambda-1)}
 + m^2}} = 1 \right\}
\\
    & =\inf
    \left\{m\  \Bigg|\  \frac{m \chi}
 {\sqrt{1 + m^2}} = 1 \right\}
    =\frac{1}{\sqrt{\chi^2 -1}}
 \end{align*}
and moreover 
 \[
    \left(\frac{mn}{\omega_n R^n}\right)^{p-p}=1,
 \]
the conditions \eqref{condi;mass} 
and \eqref{condi;chi} are reduced to \eqref{condi;m}. 
Thus Theorem \ref{mainthm1} is a 
generalization of the previous work \cite{BW}.
\end{remark} 

In view of Remark \ref{Re1} we have the following corollary.
%
%
%
%
\begin{corollary}
Let $\Omega := B_R(0)\subset\Rn$ $(n\in \N)$ with 
$R > 0$ and let $p=q\ge 1$, and suppose that $\chi>1$ 
and \eqref{condi;m}. 
Then there exists a nondecreasing function 
$M_m\in C^0([0,R])$ fulfilling 
$\sup_{r\in (0,R)} \frac{M_m(R)}{|B_r(0)|} < \infty$ 
and $M_m(R) \le m$, 
which is such that whenever $u_0$ satisfies \eqref{condi;ini1}, 
as well as \eqref{condi;ini3}  for all $r\in[0,R]$, 
the solution $(u,v)$ of \eqref{P} blows up in finite time 
in the sense of \eqref{blow-up}. 
\end{corollary}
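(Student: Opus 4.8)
\emph{Proof plan.} The plan is to obtain this corollary directly from Theorem~\ref{mainthm1} by specializing to $p=q$ and feeding in the two computations recorded in Remark~\ref{Re1}. First I would rewrite the two conditions of Theorem~\ref{mainthm1} in the case $p=q$. By Remark~\ref{Re1} one has
\[
    m_c(\chi,p,p,R)=\frac{1}{\sqrt{\chi^2-1}}
    \qquad\text{and}\qquad
    \left(\frac{mn}{\omega_n R^n}\right)^{p-q}=1,
\]
so that in this case \eqref{condi;mass} reads $m>\frac{1}{\sqrt{\chi^2-1}}$ and \eqref{condi;chi} reads $\chi>1$. Hence, under the standing hypotheses $\chi>1$ and \eqref{condi;m}: if $n=1$ then \eqref{condi;m} is exactly $m>m_c(\chi,p,p,R)$, i.e.\ \eqref{condi;mass} holds; if $n\ge 2$ then any $m>0$ is admissible and $\chi>1$ is exactly \eqref{condi;chi}. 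In either case the hypotheses of the relevant part of Theorem~\ref{mainthm1} are fulfilled for this value of $m$, so I would simply let $M_m\in C^0([0,R])$ be the nondecreasing function it provides, which by construction already satisfies the boundedness of $M_m(r)/|B_r(0)|$ and $M_m(R)\le m$.

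Next I would verify that an initial datum $u_0$ obeying \eqref{condi;ini1} meets the input requirements of Theorem~\ref{mainthm1}: indeed, \eqref{condi;ini1} yields $u_0\in C^3(\overline{\Omega})$ together with radial symmetry, positivity and the homogeneous Neumann condition, hence $u_0$ belongs to the class of data for which the local solution theory underlying \eqref{P} applies; combined with the normalization $\int_\Omega u_0\,dx=m$ (so that $\mu$ in \eqref{def;mu} equals $m/|\Omega|$) and with \eqref{condi;ini3} for all $r\in[0,R]$, Theorem~\ref{mainthm1} then produces $T^\ast\in(0,\infty)$ for which the corresponding solution blows up in the sense of \eqref{blow-up}. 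This completes the argument.

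The one place that needs a line of care—rather than a genuine obstacle—is the mass bookkeeping: one should record that \eqref{condi;ini3} with $r=R$ forces $\int_\Omega u_0\ge M_m(R)$, and that, precisely because in the case $p=q$ both the threshold $m_c(\chi,p,p,R)$ and the condition on $\chi$ are \emph{independent of the mass}, there is no mismatch between the function $M_m$ used here and the one that Theorem~\ref{mainthm1} would attach to a slightly different mass. All the analytic content is already contained in Theorem~\ref{mainthm1}, so I expect the corollary to follow with essentially no further work beyond making the $p=q$ reduction explicit.
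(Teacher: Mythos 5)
Your proposal is correct and is exactly the argument the paper intends: the corollary is stated as an immediate consequence of Theorem \ref{mainthm1} via the computations in Remark \ref{Re1}, namely $m_c(\chi,p,p,R)=\frac{1}{\sqrt{\chi^2-1}}$ and $\left(\frac{mn}{\omega_n R^n}\right)^{p-p}=1$, which reduce \eqref{condi;mass} and \eqref{condi;chi} to $\chi>1$ together with \eqref{condi;m}. No further work is needed.
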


Theorem \ref{mainthm1} gives the following byproduct; 
since arguments similar to those in the proof of \cite[Proposition 1.2]{BW} 
enable us to see this proposition, we only write the statement. 
%
%
%
%
\begin{prop}\label{prop}
Let $n\in \N$, $R>0$, $\Omega := B_R(0)\subset\Rn$ 
 and $\chi > 1$.
 \begin{item}
 {\rm(i)} Let $m > 0$ satisfy \eqref{condi;mass}. 
 Then there exists a radially symmetric positive 
 $u_m \in C^{\infty}(\overline{\Omega})$ 
 which is such that 
 \[
 \frac{\partial u_m}{\partial \nu} 
 = 0\ \mbox{on}\ \partial\Omega\quad \mbox{and}\ \int_\Omega{u_m}=m, 
 \]
 and for which it is possible to choose 
 $\varepsilon > 0$ with the property that whenever 
 $u_0$ 
 satisfies \eqref{condi;ini1} 
  as well as
 \[
\lp{\infty}{u_0-u_m}\leq \varepsilon,
 \]   
  the corresponding solution of \eqref{P} 
  blows up in finite time. 
\end{item}
\begin{item}
  {\rm(ii)} Given any $u_0$ fulfilling \eqref{condi;ini1}, 
  one can find functions 
   $u_{0k}$, 
   $k\in \N$, which
   satisfy \eqref{condi;ini1} and 
   \[
u_{0k}
\to u_0 
   \quad \mbox{in} \ L^p(\Omega)\quad \mbox{as}\ k\to\infty   
   \]
   for all $p\in(0,1)$,
    and which are such that for all $k\in\N$ 
    the solution of \eqref{P} emanating from 
    $u_{0k}$
    blows up in finite time.
 \end{item}
\end{prop}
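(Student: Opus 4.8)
The plan is to deduce Proposition~\ref{prop} from Theorem~\ref{mainthm1} by exploiting the stability of its blow-up criterion, namely that the total mass be admissible and that the accumulation bound \eqref{condi;ini3} hold. All the perturbation arguments below rest on a handful of structural properties of the profile $M_m$ produced by Theorem~\ref{mainthm1}, which one reads off from its construction (as in \cite{BW}): $M_m(0)=0$; the ratio $M_m(r)/|B_r(0)|$ stays bounded, by some $C_m<\infty$; $M_m(R)\le\kappa m$ with a fixed $\kappa\in(0,1)$ independent of $m$; and $m\mapsto M_m$ varies continuously, indeed controllably, over compact subintervals of $(m_c,\infty)$. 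These amount to saying that the room between an admissible datum and \eqref{condi;ini3} is of order $|B_r(0)|$ near the origin and uniformly positive away from it --- exactly what is needed below.

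For part~(i), fix $m$ satisfying \eqref{condi;mass}. I would first construct an explicit radially symmetric, positive $u_m\in C^\infty(\overline{\Omega})$ with $\partial u_m/\partial\nu=0$ on $\partial\Omega$ and $\io u_m=m$, of the shape ``a large constant value $a$ on a small central ball $B_\delta(0)$, smoothly glued to a bounded tail carrying the remaining mass''; for $\delta$ small and then $a$ large one gets $a>C_m$ and $\kappa m<a|B_\delta(0)|<m$, so that $\int_{B_r(0)}u_m\ge a|B_r(0)|\ge M_m(r)+(a-C_m)|B_r(0)|$ for $r\in[0,\delta]$ and $\int_{B_r(0)}u_m\ge a|B_\delta(0)|\ge M_m(r)+\eta_0$ for $r\in[\delta,R]$ with some $\eta_0>0$. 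Then, whenever $u_0$ satisfies \eqref{condi;ini1} with $\lp{\infty}{u_0-u_m}\le\ep$, one has $\bigl|\int_{B_r(0)}u_0-\int_{B_r(0)}u_m\bigr|\le\ep|B_r(0)|$ for every $r$ and $|\io u_0-m|\le\ep|\Omega|$; hence for $\ep$ small enough the mass $m':=\io u_0$ is still admissible, and, using the controlled $m$-dependence of $M_{m'}$ to absorb $M_{m'}-M_m$ into the above margins, $\int_{B_r(0)}u_0\ge M_{m'}(r)$ for all $r\in[0,R]$. Theorem~\ref{mainthm1} applied with mass $m'$ then forces the solution emanating from $u_0$ to blow up in finite time, which proves (i).

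For part~(ii), let $u_0$ satisfy \eqref{condi;ini1} and put $m_0:=\io u_0>0$. Fix once and for all a number $m^\ast$ that is admissible in Theorem~\ref{mainthm1} --- so $m^\ast>m_c$, and, when $n\ge2$, also so large that \eqref{condi;chi} holds with $m=m^\ast$ (in the borderline case $p=q$ this is where $\chi>1$ enters) --- and moreover so large that $(1-\kappa)m^\ast>m_0$, whence $m^\ast-m_0>\kappa m^\ast\ge M_{m^\ast}(R)$. For $k\in\N$ pick a nonnegative radial bump $\psi_k\in C^\infty(\overline{\Omega})$ supported in $B_{\rho_k}(0)$, with $\rho_k\downarrow0$ (already small for $k=1$), $\io\psi_k=m^\ast-m_0$, and essentially constant height $H_k\sim(m^\ast-m_0)/|B_{\rho_k}(0)|$, and set $u_{0k}:=u_0+\psi_k$. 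Then $u_{0k}$ satisfies \eqref{condi;ini1} (it is smooth, radial, positive, and equals $u_0$ near $\partial\Omega$), has total mass $m^\ast$, and $\io|u_{0k}-u_0|^p=\io\psi_k^p\lesssim(m^\ast-m_0)^p\,|B_{\rho_k}(0)|^{1-p}\longrightarrow0$, so $u_{0k}\to u_0$ in $L^p(\Omega)$ for every $p\in(0,1)$. Finally $u_{0k}$ satisfies \eqref{condi;ini3} with $m=m^\ast$: on small balls because $\psi_k$ is very tall, $\int_{B_r(0)}u_{0k}\gtrsim H_k|B_r(0)|\ge C_{m^\ast}|B_r(0)|\ge M_{m^\ast}(r)$, while on the remaining radii $\int_{B_r(0)}u_{0k}$ is (all but a vanishing fraction of) $\io\psi_k=m^\ast-m_0$, which exceeds $\kappa m^\ast\ge M_{m^\ast}(R)\ge M_{m^\ast}(r)$. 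Theorem~\ref{mainthm1} with mass $m^\ast$ then forces each solution emanating from $u_{0k}$ to blow up in finite time, which is (ii).

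The step I expect to be the genuine obstacle is not the soft perturbation-and-approximation bookkeeping above but the quantitative input it needs: one has to reopen the proof of Theorem~\ref{mainthm1} and confirm, with enough uniformity, precisely those features of $M_m$ invoked here --- $M_m(0)=0$, boundedness of $M_m(r)/|B_r(0)|$, the bound $M_m(R)\le\kappa m$ with a fixed $\kappa<1$, and the continuity of $m\mapsto M_m$. With those in hand, everything proceeds along the lines of \cite[Proposition~1.2]{BW}.
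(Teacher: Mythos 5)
The paper offers no proof of Proposition \ref{prop}: it explicitly defers to the argument of \cite[Proposition 1.2]{BW}, and your outline (perturb/approximate the data while preserving the mass condition and the accumulation bound \eqref{condi;ini3}, then invoke Theorem \ref{mainthm1}) is exactly that argument in spirit. However, one of the four structural properties of $M_m$ on which your entire bookkeeping rests is false, and it is the one doing the work away from the origin. You assume $M_m(R)\le\kappa m$ with a fixed $\kappa\in(0,1)$, so that the margin in \eqref{condi;ini3} is ``uniformly positive away from the origin.'' But the profile produced in Section \ref{Sec4} is $M_m(r)=\omega_n\underline{w}(r^n,0)$ with $\underline{w}(R^n,0)=m/\omega_n$, i.e.\ $M_m(R)=m$ exactly (the theorem's statement $M_m(R)\le m$ is an equality). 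Consequently the margin $m-M_m(r)$ degenerates as $r\nearrow R$, and both of your uses of it break down there: in (i) the claimed bound $\int_{B_r(0)}u_m\ge M_m(r)+\eta_0$ for $r\in[\delta,R]$ is impossible at $r=R$ (where both sides equal $m$), and a perturbation of size $\ep$ can push $\int_{B_r(0)}u_0$ below $M_{m'}(r)$ for $r$ near $R$; in (ii) the inequality $m^\ast-m_0>\kappa m^\ast\ge M_{m^\ast}(r)$ is unavailable, and for $r$ near $R$ one has $\int_{B_r(0)}u_{0k}=m^\ast-\int_{\Omega\setminus B_r(0)}u_0$ competing against $M_{m^\ast}(r)\to m^\ast$.

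The correct substitute is the \emph{shape} of $M_m$ near $r=R$: in the variable $s=r^n$ the outer part of $\underline{w}(\cdot,0)$ is affine, $M_m(r)=m-\omega_n D(0)(R^n-r^n)$ for $r^n\ge K\sqrt{B_0}$, with $D(0)\ge\sigma=\frac{m}{\omega_n}\cdot\frac{a_\lambda}{K^2+a_\lambda R^n}$ by \eqref{def;D} and \eqref{inequ3-3}. The condition \eqref{condi;ini3} near $r=R$ therefore reduces to a slope comparison, $\int_{\Omega\setminus B_r(0)}u_0\le\omega_n D(0)(R^n-r^n)$, i.e.\ essentially a bound of the density of $u_0$ on the outer annulus by $nD(0)$. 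In (ii) this is arranged by taking $m^\ast$ large, since $K$ can be held fixed as $m^\ast$ grows (for $n\ge2$, or $p<q$, the constraint \eqref{estimate6} only improves; for $n=1$, $p=q$ one uses $\chi>1$) so that $D(0)$ grows linearly in $m^\ast$ and eventually dominates $\|u_0\|_{L^\infty(\Omega)}/n$. In (i) it is arranged by making the tail of $u_m$ outside $B_\delta(0)$ of small sup-norm (possible since it need only carry the small residual mass $m-a|B_\delta(0)|$) and then taking $\ep$ small, together with the mass-continuity of the family $M_{m'}$. With this replacement your intermediate-region and near-origin estimates go through as written; without it, the proof as stated has a genuine hole precisely at the outer boundary.
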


The proof of the main result 
 is based on that of \cite[Theorem1.1]{BW}. 
Thus in the same way, we introduce $s:=r^n$ for $r\in [0,R]$ 
and the mass accumulation function $w=w(s,t)$ defined as 
  \[
      w(s,t):= \int_0^{s^{\frac{1}{n}}}{r^{n-1}u(r,t)\,dr},
  \]
and  
then, a combination of the fact $u(r_\ast,t) \geq \frac{w(s_\ast,t)}{s_\ast}$ 
given by using $u \geq w_s$ and the mean value theorem and 
that $w$ is the solution of a scalar parabolic  equation yields 
that the core of the proof is to find a suitable 
subsolution 
$\underline{w}$ 
 such that for some $T>0$ and some $s_\ast\in(0,R^n)$, 
  \begin{align*}
      \frac{\underline{w}(s_\ast,t)}{s_\ast}\to \infty\  \mbox{as}\  t\nearrow T.
  \end{align*}
In the previous study \cite{BW}, the interval $(0,R^n)$  
is divided three parts, 
and in very inner region thanks to construction of the subsolution 
$\underline{w}$ using the structure of a quadratic function, 
we obtain  a 
suitable estimate. However, in this paper 
we cannot 
establish it from 
the same argument when $p<q$. 
Therefore, 
adopting a new subsolution $\underline{w}$ consisted by 
 an exponential function, we can prove 
existence of an initial data such that the corresponding 
solution blows up in finite time. 
In this proof, the key idea is to employ a new 
viewpoint in the proof of some suitable estimate; by 
establishing a new estimate 
where the effect of the aggregation come from 
chemotactic interaction 
works adequately 
(see Lemma \ref{tool;very inner}), 
we can attain a useful estimate. 

This paper is organized as follows. In Section 2  
we recall local existence in \eqref{P} and we 
consider the mass accumulation function and 
a scalar parabolic equation. 
In order to use the comparison argument as in 
\cite[Lemma 5.1]{BW} we construct subsolutions 
 and confirm properties in Section 3. Finally, we prove 
 existence of blow-up solutions in Section 4. 
\section{Local existence and a parabolic problem
 satisfied by the mass accumulation}\label{Sec2}
In this section we provide a local existence result and 
 a mass accumulation satisfies some parabolic problem. 
First, we  recall a local existence result; 
the following result was shown in \cite[Theorem 1.1]{OMY}.
%
%
%
%
\begin{lem}\label{localexistence}
Suppose that $u_0$ complies with \eqref{condi;ini1}. 
Then there exist $\tmax\in(0,\infty]$ and 
a pair $(u,v)$ 
of positive radially symmetric 
functions $u\in C^{2,1}(\overline{\Omega}\times[0,\tmax))$ 
and $v\in C^{2,0}(\overline{\Omega}\times[0,\tmax))$ which solve 
\eqref{P} classically in $\Omega\times(0,\tmax)$, and which are 
such that 
\begin{align}\label{local}
\mbox{if}\quad\tmax<\infty,\quad\mbox{then}\quad
\limsup_{t\nearrow\tmax}\|u(\cdot,t)\|_{L^{\infty}(\Omega)}
=\infty.
\end{align}
\end{lem}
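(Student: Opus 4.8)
The plan is to eliminate $v$ by solving the second equation explicitly in terms of $u$, and then to treat the resulting single quasilinear parabolic problem for $u$ by a contraction‑mapping argument, exploiting that $u_0$ is bounded away from $0$ so that the flux‑limited diffusion is \emph{uniformly} parabolic for a short time. Concretely, in the radial variable $r=|x|$ one has $\Delta v=r^{1-n}(r^{n-1}v_r)_r$, so that $0=\Delta v-\mu+u$ together with the no‑flux boundary condition integrates to
\[
  v_r(r,t)=r^{1-n}\int_0^r\rho^{n-1}\big(\mu-u(\rho,t)\big)\,d\rho ,
\]
where $v_r(R,t)=0$ holds because the conormal boundary condition in \eqref{P} forces conservation of mass and $\mu=\frac{1}{|\Omega|}\io u_0$, so that the elliptic problem is compatible. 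I would denote by $v[u]$ the resulting function (unique up to an irrelevant additive constant, since only $\nabla v$ enters \eqref{P}); the map $u\mapsto v[u]$ gains one spatial derivative, inherits the time regularity of $u$, depends on $u$ in a locally Lipschitz way, and obeys $\|\nabla v[u](\cdot,t)\|_{L^{\infty}(\Omega)}\le\tfrac{R}{n}\big(\mu+\lp{\infty}{u(\cdot,t)}\big)$. Since $v_r(R,t)=0$, the conormal condition also reduces to the homogeneous Neumann condition $\partial_\nu u=0$, so substituting $v=v[u]$ turns \eqref{P} into a single quasilinear Neumann problem for $u$.

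Next I would set up the fixed point. Since $u_0\in C^3(\overline\Omega)$ is positive on the compact set $\overline\Omega$, put $\delta_0:=\tfrac12\min_{\overline\Omega}u_0>0$; for small $T>0$ and a suitable $\beta\in(0,1)$, work in the closed bounded set
\[
  \mathcal{S}_T:=\Big\{\,\tilde u\in C^{1+\beta,\frac{1+\beta}{2}}(\overline\Omega\times[0,T])\ :\ \tilde u(\cdot,0)=u_0,\ \|\tilde u-u_0\|_{C^{1+\beta,\frac{1+\beta}{2}}}\le1,\ \tilde u\ge\delta_0\,\Big\}.
\]
Given $\tilde u\in\mathcal{S}_T$, freeze the diffusion coefficient $a:=\tilde u^{\,p}\big(\tilde u^{\,2}+|\nabla\tilde u|^2\big)^{-1/2}$, which is Hölder continuous on $\overline\Omega\times[0,T]$ and satisfies $c_1\le a\le c_2$ with $c_1,c_2>0$ depending only on $\delta_0$ and the bounds defining $\mathcal{S}_T$ (here $\tilde u\ge\delta_0$ is essential), and let $\Phi(\tilde u):=u$ solve the \emph{linear} divergence‑form problem
\[
  u_t=\nabla\cdot\big(a\,\nabla u\big)-\chi\,\nabla\cdot\!\left(\frac{\tilde u^{\,q}\,\nabla v[\tilde u]}{\sqrt{1+|\nabla v[\tilde u]|^2}}\right)\ \text{in }\Omega\times(0,T),\qquad\partial_\nu u=0,\quad u(\cdot,0)=u_0,
\]
supplied by linear parabolic Schauder theory. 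A comparison argument with spatially constant sub/supersolutions (using $u_0\ge2\delta_0$ and the $L^\infty$‑bound on the drift coming from the bound on $\nabla v[\tilde u]$) keeps $\delta_0\le u$ and controls $\lp{\infty}{u}$, while continuity of the Schauder estimates in $T$ gives $\|u-u_0\|\le1$ once $T$ is small, so $\Phi(\mathcal{S}_T)\subset\mathcal{S}_T$. For $\tilde u_1,\tilde u_2\in\mathcal{S}_T$, the difference $\Phi(\tilde u_1)-\Phi(\tilde u_2)$ solves a linear parabolic problem whose coefficients and right‑hand side differ from their $\tilde u_1$‑counterparts by terms bounded by $\|\tilde u_1-\tilde u_2\|$ — the maps $\xi\mapsto\xi^p(\xi^2+\eta^2)^{-1/2}$ and $\xi\mapsto\xi^q$ being Lipschitz on the region $\xi\ge\delta_0$ relevant here, and $v[\cdot]$ being Lipschitz — so the linear estimates plus smallness of $T$ make $\Phi$ a contraction, and Banach's theorem yields a unique local solution.

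After that, standard parabolic bootstrapping (the coefficient $a$ is then as regular as $u$, which improves iteratively) upgrades the fixed point to $u\in C^{2,1}(\overline\Omega\times[0,\tmax))$, and $v=v[u]\in C^{2,0}$ follows from the explicit formula; radial symmetry is preserved since every step of the construction respects it. Taking $\tmax$ to be the supremum of existence times, I would obtain \eqref{local} by a continuation argument: if $\tmax<\infty$ and yet $\limsup_{t\nearrow\tmax}\lp{\infty}{u(\cdot,t)}<\infty$, this bound and the elliptic formula first bound $\nabla v$, hence the drift; a comparison argument then gives a positive lower bound for $u$ on $[0,\tmax)$, so the equation is uniformly parabolic there, whence De Giorgi–Nash–Moser together with Schauder theory provide uniform Hölder bounds on $u$ and $\nabla u$ up to $\tmax$, and reapplying the short‑time step from times close to $\tmax$ continues the solution past $\tmax$ — a contradiction.

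The hard part will be the degeneracy. The coefficient $u^p(u^2+|\nabla u|^2)^{-1/2}$ loses ellipticity both as $u\downarrow0$ and as $|\nabla u|\to\infty$, and it depends on $\nabla u$ itself, so the whole scheme rests on propagating — for as long as $\lp{\infty}{u(\cdot,t)}$ is finite — a strictly positive lower bound for $u$ together with uniform Hölder control of $\nabla u$; the latter is indispensable precisely because restarting the equation needs gradient estimates rather than mere bounds on $u$, and it is also what makes the blow‑up criterion expressible through $\lp{\infty}{u(\cdot,t)}$ alone. Securing these a priori estimates is the technical heart of the argument, and if the comparison step for the lower bound is not directly available one would instead first treat the non‑degenerate approximation $u^p\rightsquigarrow(u+\varepsilon)^p$, carry out the above for it, and pass to the limit $\varepsilon\downarrow0$ using the $\varepsilon$‑uniform estimates.
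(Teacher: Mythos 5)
The paper does not prove this lemma at all: it is quoted verbatim from \cite[Theorem 1.1]{OMY}, so your proposal is being measured against a result whose proof occupies a separate paper. Your short-time existence scheme (integrating the elliptic equation explicitly in the radial variable, freezing the coefficient $a=\tilde u^{\,p}(\tilde u^{\,2}+|\nabla\tilde u|^2)^{-1/2}$, and contracting in a H\"older ball on which $\tilde u\ge\delta_0$ and $\nabla\tilde u$ is controlled) is a reasonable and essentially standard route, and the initial non-degeneracy ($u_0>0$, $u_0\in C^3$) does make it workable for small $T$; your formula for $v_r$ and the compatibility at $r=R$ via mass conservation are correct.

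The genuine gap is in the extensibility criterion \eqref{local}, which is the substantive content of the lemma: it asserts that $\lp{\infty}{u(\cdot,t)}$ \emph{alone} controls extendability. Your continuation step claims that an $L^\infty$ bound on $u$ up to $\tmax$ yields, via ``De Giorgi--Nash--Moser together with Schauder theory,'' uniform H\"older bounds on $u$ and $\nabla u$. This is circular. The principal part of the equation is
\begin{equation*}
\frac{u^{p}}{\sqrt{u^{2}+|\nabla u|^{2}}}\left(\delta_{ij}-\frac{u_{x_i}u_{x_j}}{u^{2}+|\nabla u|^{2}}\right)u_{x_ix_j},
\end{equation*}
whose smallest eigenvalue is $u^{p+2}(u^{2}+|\nabla u|^{2})^{-3/2}$, so uniform parabolicity requires an a priori bound on $|\nabla u|$ (and a lower bound on $u$) --- exactly what you are trying to derive. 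De Giorgi--Nash--Moser presupposes uniform parabolicity and in any case gives H\"older continuity of $u$, not of $\nabla u$; Schauder theory needs H\"older coefficients, but the coefficient depends on $\nabla u$ itself. Ruling out gradient blow-up under a mere $L^\infty$ bound on $u$ is precisely the nontrivial theorem of the cited reference (whose title is ``Extensibility criterion ruling out gradient blow-up\,\dots''), established there, and in \cite{BW0} for $p=q=1$, by bespoke gradient estimates. The positivity propagation suffers the same entanglement: your comparison at a spatial minimum needs $\nabla\cdot(\nabla v/\sqrt{1+|\nabla v|^2})$ bounded, hence $C^{1,\alpha}$ control of $v$ and $C^{\alpha}$ control of $u$, which again presupposes uniform parabolicity. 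Your closing paragraph honestly flags these estimates as ``the technical heart,'' but without supplying them the criterion \eqref{local} --- as opposed to a weaker one involving $\|u\|_{W^{1,\infty}}$ and $\inf u$ --- is not proved.
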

In the following let
$\Omega := B_R(0)\subset \Rn$ ($n \in \N$) with some $R > 0$ 
and let $u_0$ satisfy \eqref{condi;ini1}, and denote by 
$(u,v)=(u(r,t),v(r,t))$  the  radially symmetric local solution of 
\eqref{P} and by $\tmax$ its maximal existence time 
obtained in Lemma \ref{localexistence}. 
Moreover, we introduce the mass accumulation function 
$w$ and the parabolic operator $\mathcal{P}$ such that
 \begin{align}\label{def;w}
     w(s,t) := \int_0^{s^{\frac{1}{n}}}{r^{n-1}u(r,t)\,dr}
 \end{align}
for $s\in[0,R^n]$ and $t\in[0,T)$, and
 \begin{align}\label{def;P}
     (\mathcal{P}\widetilde{w})(s,t) := \widetilde{w}_t-n^{p+1}
     \cdot\frac{s^{2-\frac{2}{n}}\widetilde{w}_s^p
     \widetilde{w}_{ss}}
     {\sqrt{\widetilde{w}_s^2+n^2s^{2-\frac{2}{n}}
     \widetilde{w}_{ss}^2}}-n^q\chi\cdot
     \frac{(\widetilde{w}-\frac{\mu}{n}s)\widetilde{w}_s^q}
     {\sqrt{1+s^{\frac{2}{n}-2}
     (\widetilde{w}-\frac{\mu}{n}s)^2}}.
 \end{align}
 
If $\widetilde{w}\in C^1((0,R^n)\times(0,T))$ is such that 
$\widetilde{w}_s>0$ and $\widetilde{w}(\cdot,t)\in W^{2,\infty}(0,R^n)$ 
for all $t\in(0,T)$, then the expression $\mathcal{P}\widetilde{w}$ 
is well-defined. Now we show that the function $w$ defined as 
\eqref{def;w} fulfills these corresponding condition. 
Thus, the following lemma yields that the function $w$ 
satisfies some parabolic problem (see \cite[Lemma 2.1]{BW}).
%
%
%
%
\begin{lem}\label{baselem}
Let $n\in\N$, $\chi > 0$. Then 
for $T > 0$ and some nonnegative 
radially symmetric $u_0\in C^0(\overline{\Omega})$, whenever $(u,v)$ 
is a positive radially symmetric classical 
solution of \eqref{P} in $\Omega \times [0,T)$, the function $w$ 
defined as \eqref{def;w} satisfies
 \begin{align}\label{lem;base}     
     \begin{cases}
      (\mathcal{P}w)(s,t) = 0, &s\in(0,R^n),\ t\in(0,T),
      \\
      w(0,t) = 0,\quad w(R^n,t) = \dfrac{m}{\omega_n},
      &t\in(0,T),
      \\
      w(s,0) = \int_0^{s^\frac{1}{n}}{r^{n-1}u_0(r)\,dr},
      &s\in[0,R^n],
     \end{cases}
 \end{align}
where $m := \int_\Omega{u_0(x)\,dx}$ and 
where $\omega_n$ denotes the $(n-1)$-dimensional 
measure of the unit sphere in $\Rn$.
\end{lem}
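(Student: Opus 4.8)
The argument is the one used for the corresponding statement on \eqref{P1} in \cite[Lemma 2.1]{BW}; the powers $p$ and $q$ only change the exponents that appear in $\mathcal{P}$. First I would record the differentiation rules for $w$. Since $(u,v)$ is a positive classical solution, $u\in C^{2,1}$ on $\Omega\times(0,T)$ and $s=r^n$ is a $C^\infty$-diffeomorphism of $(0,R)$ onto $(0,R^n)$; differentiating \eqref{def;w} under the integral sign gives $w_s(s,t)=\frac1n\,u(s^{1/n},t)$ and $w_{ss}(s,t)=\frac1{n^2}\,s^{1/n-1}u_r(s^{1/n},t)$. Hence $w_s>0$ by positivity of $u$, and since a radially symmetric $C^2$ function satisfies $u_r(r,t)=O(r)$ as $r\to0$, the product $s^{1/n-1}u_r(s^{1/n},t)$ stays bounded near $s=0$; together with the smoothness of $w$ on $(0,R^n)\times(0,T)$ this yields $w\in C^1((0,R^n)\times(0,T))$ and $w(\cdot,t)\in W^{2,\infty}(0,R^n)$, so that $\mathcal{P}w$ is well defined.

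The boundary and initial data are immediate from \eqref{def;w}: $w(0,t)=0$ and $w(s,0)=\int_0^{s^{1/n}}r^{n-1}u_0(r)\,dr$. For $w(R^n,t)$ I would integrate the first equation of \eqref{P} over $\Omega$ and use the no-flux boundary condition to obtain $\frac{d}{dt}\int_\Omega u=0$, so $\int_\Omega u(\cdot,t)=\int_\Omega u_0=m$ and therefore $w(R^n,t)=\int_0^R r^{n-1}u(r,t)\,dr=\frac1{\omega_n}\int_\Omega u(\cdot,t)=\frac{m}{\omega_n}$.

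For the identity $(\mathcal{P}w)(s,t)=0$ I would rewrite the first PDE in radial form, so that $r^{n-1}u_t=\partial_r\bigl(r^{n-1}\frac{u^p u_r}{\sqrt{u^2+u_r^2}}\bigr)-\chi\,\partial_r\bigl(r^{n-1}\frac{u^q v_r}{\sqrt{1+v_r^2}}\bigr)$, then differentiate \eqref{def;w} in $t$ and integrate in $r$ from $0$ to $\rho:=s^{1/n}$; the contributions at $r=0$ vanish by radial symmetry ($u_r(0,t)=v_r(0,t)=0$, and $r^{n-1}\to0$ when $n\ge2$), so that $w_t(s,t)$ equals the flux at $r=\rho$. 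The quantity $v_r$ is eliminated by integrating the radial form $(r^{n-1}v_r)_r=r^{n-1}(\mu-u)$ of the second equation of \eqref{P} from $0$ to $\rho$, which gives $\rho^{n-1}v_r(\rho,t)=\frac{\mu}{n}s-w(s,t)$. Substituting $u=nw_s$, $u_r=n^2 s^{1-1/n}w_{ss}$ and this formula for $v_r$ into the flux at $r=\rho$ and multiplying through by $\rho^{n-1}=s^{1-1/n}$, the diffusion part turns into $\frac{n^{p+1}s^{2-2/n}w_s^p w_{ss}}{\sqrt{w_s^2+n^2 s^{2-2/n}w_{ss}^2}}$ and the chemotactic part into $-n^q\chi\,\frac{(w-\frac{\mu}{n}s)w_s^q}{\sqrt{1+s^{2/n-2}(w-\frac{\mu}{n}s)^2}}$, the minus sign coming from the $-\chi$ in the PDE together with the sign in $\rho^{n-1}v_r=\frac{\mu}{n}s-w$. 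Comparing with \eqref{def;P} gives $(\mathcal{P}w)(s,t)=0$ on $(0,R^n)\times(0,T)$.

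The main obstacle is purely computational: one must track the exponents of $s$ and the powers of $n$ carefully so that the two rescaled fluxes match the two nonlinear terms of $\mathcal{P}$ in \eqref{def;P} term by term, and one must justify that all boundary contributions at $r=0$ vanish and that $w_{ss}$ does not blow up there — both of which rest on the radial symmetry of $u$, i.e.\ on $u_r(r,t)=O(r)$ near $r=0$. Everything else is a direct verification.
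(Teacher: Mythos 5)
Your proposal is correct and follows exactly the route the paper intends: the paper's own proof is a one-line citation of \cite[Lemma 2.1]{BW}, and your computation is precisely that argument adapted to the exponents $p,q$ — the substitutions $u=nw_s$, $u_r=n^2s^{1-\frac1n}w_{ss}$, $\rho^{n-1}v_r=\frac{\mu}{n}s-w$, the vanishing of the flux at $r=0$, and the mass-conservation identity all check out and reproduce \eqref{def;P} term by term. One small imprecision worth noting: for $n\ge3$ the estimate $u_r=O(r)$ only gives $w_{ss}=O(s^{\frac2n-1})$, which is unbounded near $s=0$, so the justification of boundedness should instead be applied to the combinations $s^{2-\frac2n}w_{ss}=\frac1{n^2}s^{1-\frac1n}u_r(s^{\frac1n},t)$ and $s^{1-\frac1n}w_{ss}$ that actually occur in $\mathcal{P}$, which are bounded and make $(\mathcal{P}w)(s,t)$ well defined for $s\in(0,R^n)$.
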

\begin{proof}
An argument 
 similar to that in 
 the proof of \cite[Lemma 2.1]{BW}
 implies the conclusion of this lemma.
\end{proof}
\section{Construction of subsolutions for \eqref{lem;base}}\label{Sec3}

In this section we construct a subsolution 
$\underline{w}$ of \eqref{lem;base}. 
Then, using a suitable comparison 
principle (see \cite[Lemma 5.1]{BW}) to obtain $w\geq \underline{w}$ 
in $[0,R^n]\times[0,T)$, we derive that $u(r,t)$ blows up in 
finite time $T>0$ (fixed later). In Section \ref{subsolution} we prepare 
a family of functions, and define $\underline{w}$. In Sections \ref{outer}, 
\ref{inner}, \ref{veryinner} and \ref{intermadiate} 
we divide $[0,R^n]$ into three parts 
and show properties of a subsolution 
 $\underline{w}$ in respective regions. 
%
%
%
%
\subsection{Constructing a family of 
candidates.}\label{subsolution}
In order to construct subsolutions $\underline{w}$ for \eqref{lem;base} 
we first provide some parameter and some function; 
for $\lambda\in [\frac{1}{3},1]$ we put
 \begin{align}\label{def;ab}
     a_\lambda := \frac{(1-\lambda)^2}{2\lambda}\geq 0\quad
     \mbox{and}\quad b_\lambda := \frac{3\lambda-1}
     {2\lambda}\geq 0
 \end{align}
and define 
\begin{align}\label{def;phi}
         \varphi(\xi) :=
           \begin{cases}
              \dfrac{2\lambda}{de^d}(e^{d\xi}-1)
              &\mbox{if}\ \xi\in [0,1),
              \\[4mm]
              1-\dfrac{a_\lambda}{\xi-b_\lambda}
              &\mbox{if}\ \xi \geq 1,
           \end{cases}
 \end{align}
where $1<d<2$ is such that
\begin{align}\label{def;d}
        (2-d)e^d-2=0.
\end{align}  
Here we note that 
there is a solution $d\in (1,2)$ of \eqref{def;d}; 
indeed, since 
\[
  ((2-d)e^d-2) |_{d=1} = e-2 > 0 
\quad 
  \mbox{and} 
\quad 
  ((2-d)e^d-2) |_{d=2} = -2<0 
\]
hold, the intermediate value theorem enables us to 
find $d\in (1,2)$ satisfying \eqref{def;d}. 
  Then we can show that
 $\varphi \in C^1([0,\infty))\cap W^{2,\infty}(0,\infty)\cap 
  C^2([0,\infty)\setminus\{1\})$ with 
  \begin{align}\label{def;phi'}
        \varphi'(\xi) =
           \begin{cases}
              2\lambda e^{d(\xi-1)}
              &\mbox{if}\ \xi\in [0,1),
              \\[4mm]
              \dfrac{a_\lambda}{(\xi-b_\lambda)^2}
              &\mbox{if}\ \xi \geq 1
           \end{cases}
 \end{align}
 and
 \begin{align}\label{def;phi''}
         \varphi''(\xi) =
           \begin{cases}
              2d\lambda e^{d(\xi-1)}
              &\mbox{if}\ \xi\in [0,1),
              \\[4mm]
              -\dfrac{2a_\lambda}{(\xi-b_\lambda)^3}
              &\mbox{if}\ \xi \geq 1.
           \end{cases}
 \end{align}
In particular, $\varphi'(\xi) > 0$ for all $\xi \geq 0$. 
Then we have to choose 
$\lambda \in (\frac{5-\sqrt{17}}{2},1]$ suitably in the case that 
$n=1$ 
(see Lemma \ref{estimate1;intermediate}), and we 
must fix $\lambda=\frac{1}{3}$ in the case $n\geq2$ 
(see Lemma \ref{estimate2;intermediate}). 
The following lemma has already been proved
 in the proof of \cite[Lemma 3.1]{BW}. Thus we 
 recall only 
  the statement of the lemma.
%
%
 
\begin{lem}\label{def;A,D,E,N}
 Let $n \in \N$, $m > 0$, $\lambda\in[\frac{1}{3},1]$, $K > 0$, 
 $T >0$, and suppose that $B \in C^1([0,T))$ satisfies that
 $B(t)\in(0,1)$, $K\sqrt{B(t)} < R^n$ for all 
 $t\in[0,T)$ and 
 \begin{align}\label{condi;B1}
     B(t)\leq \frac{K^2}{4(a_\lambda+b_\lambda)^2}   
 \end{align}
for all $t\in[0,T)$, where $a_\lambda$ and $b_\lambda$ are given by 
\eqref{def;ab}. Then the following 
functions are well-defined\/{\rm :}
 \begin{align}\label{def;A}
    A(t) := \frac{m}{\omega_n}\cdot\frac
    {K^2-2b_\lambda K\sqrt{B(t)}+b^2_\lambda B(t)}
    {N(t)} 
    \quad \mbox{for} \ t\in[0,T)
 \end{align}
and
 \begin{align}\label{def;D}
    D(t) := \frac{m}{\omega_n}\cdot\frac{a_\lambda}
    {N(t)}     \quad \mbox{for} \ t\in[0,T)
 \end{align}
as well as
 \begin{align}\label{def;E}
    E(t) := \frac{m}{\omega_n}-R^nD(t)=
    \frac{m}{\omega_n}\cdot\frac
    {K^2-(a_\lambda+b_\lambda)(2K\sqrt{B(t)}-
    b_\lambda B(t))}{N(t)} 
        \quad \mbox{for} \ t\in[0,T)
 \end{align}
with
 \begin{align}\label{def;N}
    N(t) := K^2+a_\lambda R^n-(a_\lambda +b_\lambda)
    (2K\sqrt{B(t)}-b_\lambda B(t))
     \quad \mbox{for} \ t\in[0,T).  
 \end{align}
Furthermore, we have
 \[
    A'(t) = \frac{m}{\omega_n}\cdot\frac{\Bigl(\frac{K}
    {\sqrt{B(t)}}-b_\lambda\Bigr)\cdot(a_\lambda K^2-
    a_\lambda b_\lambda R^n)\cdot B'(t)}{N^2(t)}
 \]
as well as
 \begin{align}\label{def;D'}
    D'(t) = \frac{m}{\omega_n}\cdot\frac
    {a_\lambda(a_\lambda+b_\lambda)\cdot
    \Bigl(\frac{K}
    {\sqrt{B(t)}}-b_\lambda\Bigr)\cdot B'(t)}{N^2(t)}
 \end{align}
for all $t\in(0,T)$.
\end{lem}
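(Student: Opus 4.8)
The plan is to verify that $N(t)$ is strictly positive on $[0,T)$ under the stated hypotheses, since every one of $A$, $D$, $E$ is defined by dividing a fixed quantity by $N(t)$, and the well-definedness claim reduces entirely to this nonvanishing. First I would rewrite $N(t)$ by completing the square in $\sqrt{B(t)}$: observe that $K^2 - (a_\lambda+b_\lambda)(2K\sqrt{B(t)}-b_\lambda B(t))$ can be compared to $\bigl(K - (a_\lambda+b_\lambda)\sqrt{B(t)}\bigr)^2$ up to controllable error terms, or more directly, use the constraint \eqref{condi;B1}, namely $\sqrt{B(t)} \le \frac{K}{2(a_\lambda+b_\lambda)}$, to bound $2K\sqrt{B(t)}-b_\lambda B(t) \le 2K\sqrt{B(t)} \le \frac{K^2}{a_\lambda+b_\lambda}$. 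Substituting this into the definition gives
\[
  N(t) \ge K^2 + a_\lambda R^n - (a_\lambda+b_\lambda)\cdot\frac{K^2}{a_\lambda+b_\lambda} = a_\lambda R^n \ge 0,
\]
and in fact one gets the strictly positive lower bound $N(t) \ge K^2 + a_\lambda R^n - (a_\lambda+b_\lambda)(2K\sqrt{B(t)}-b_\lambda B(t)) > 0$ once one is slightly more careful (for instance, since $B(t)<1$ and $K\sqrt{B(t)}<R^n$, the term $a_\lambda R^n$ strictly dominates whatever small deficit arises, using $a_\lambda,b_\lambda\ge 0$). This handles the well-definedness of $A$, $D$, $E$, $N$; the identity $E(t)=\frac{m}{\omega_n}-R^nD(t)$ and the second displayed form of $E$ follow by direct substitution of \eqref{def;D} and \eqref{def;N} and collecting terms over the common denominator $N(t)$.

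Next I would compute the derivatives. Since $a_\lambda$, $b_\lambda$, $K$, $R^n$, $m$, $\omega_n$ are all constants, $A$, $D$ depend on $t$ only through $B(t)$, and by the chain rule it suffices to differentiate with respect to the variable $\beta := B(t)$ and then multiply by $B'(t)$. For $D(t) = \frac{m}{\omega_n}\cdot\frac{a_\lambda}{N(t)}$ we get $D'(t) = -\frac{m}{\omega_n}\cdot\frac{a_\lambda N'(t)}{N^2(t)}$, and from \eqref{def;N},
\[
  N'(t) = -(a_\lambda+b_\lambda)\Bigl(\frac{K}{\sqrt{B(t)}} - b_\lambda\Bigr)B'(t),
\]
using $\frac{d}{d\beta}(2K\sqrt\beta - b_\lambda\beta) = \frac{K}{\sqrt\beta} - b_\lambda$; substituting yields exactly \eqref{def;D'}. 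For $A'(t)$ one differentiates the quotient in \eqref{def;A}: writing the numerator as $\frac{m}{\omega_n}P(t)$ with $P(t) = K^2 - 2b_\lambda K\sqrt{B(t)} + b_\lambda^2 B(t)$, one has $P'(t) = -b_\lambda\bigl(\frac{2K}{\sqrt{B(t)}} - 2b_\lambda\bigr)\cdot\tfrac12 B'(t)$... more cleanly, $P'(t) = b_\lambda\bigl(\frac{-K}{\sqrt{B(t)}} + b_\lambda\bigr)B'(t) = -b_\lambda\bigl(\frac{K}{\sqrt{B(t)}} - b_\lambda\bigr)B'(t)$, so that $A'(t) = \frac{m}{\omega_n}\cdot\frac{P'(t)N(t) - P(t)N'(t)}{N^2(t)}$; plugging in $P'$ and $N'$, the common factor $\bigl(\frac{K}{\sqrt{B(t)}} - b_\lambda\bigr)B'(t)$ pulls out and the bracket $\bigl[-b_\lambda N(t) + (a_\lambda+b_\lambda)P(t)\bigr]$ simplifies after expansion to $a_\lambda K^2 - a_\lambda b_\lambda R^n$, which is the claimed formula.

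The main obstacle is the last algebraic simplification: one must check that $-b_\lambda N(t) + (a_\lambda+b_\lambda)P(t)$ collapses to the $t$-independent quantity $a_\lambda(K^2 - b_\lambda R^n)$. Expanding, $-b_\lambda N(t) = -b_\lambda K^2 - a_\lambda b_\lambda R^n + b_\lambda(a_\lambda+b_\lambda)(2K\sqrt{B}-b_\lambda B)$ and $(a_\lambda+b_\lambda)P(t) = (a_\lambda+b_\lambda)K^2 - (a_\lambda+b_\lambda)(2b_\lambda K\sqrt{B} - b_\lambda^2 B) = (a_\lambda+b_\lambda)K^2 - b_\lambda(a_\lambda+b_\lambda)(2K\sqrt{B} - b_\lambda B)$; adding these, the $B$-dependent terms cancel identically and what remains is $(-b_\lambda + a_\lambda + b_\lambda)K^2 - a_\lambda b_\lambda R^n = a_\lambda K^2 - a_\lambda b_\lambda R^n$, as required. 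This is the place where a sign slip would be easy, so I would carry it out carefully; everything else is routine chain-rule bookkeeping together with the elementary estimate from \eqref{condi;B1} for positivity of $N$. Since, as the excerpt notes, this lemma is essentially \cite[Lemma 3.1]{BW}, the proof in the paper may simply cite that reference and omit the computation.
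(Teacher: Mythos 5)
Your computation is correct, and it is essentially the verification the paper delegates entirely to \cite[Lemma 3.1]{BW} (the paper prints no proof, exactly as you anticipated): positivity of $N$ from \eqref{condi;B1}, then quotient/chain rule with the cancellation $-b_\lambda N+(a_\lambda+b_\lambda)P=a_\lambda K^2-a_\lambda b_\lambda R^n$, which you carry out correctly. One small point: your displayed bound $N(t)\ge a_\lambda R^n$ is not by itself strictly positive when $\lambda=1$ (where $a_\lambda=0$); the clean fix is to keep the discarded term and write $N(t)\ge a_\lambda R^n+(a_\lambda+b_\lambda)b_\lambda B(t)>0$, noting that $a_\lambda$ and $b_\lambda$ cannot vanish simultaneously for $\lambda\in[\frac13,1]$.
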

 
%
%

Using these definitions, 
we can express clearly our comparison function 
$\underline{w}$. 
Letting $K>0$ be a constant fixed later 
and letting $B$ be a function chosen suitably later, 
we will give a composite structure of $\underline{w}$ by separating 
$[0,R^n]$ into two parts 
(an inner region and an outer region). 
 
\begin{lem}\label{def;subsolution}
 Let $n \in \N$, $m > 0$, $\lambda\in[\frac{1}{3},1]$, 
 $K > 0$ and $T>0$, and let $B \in C^1([0,T))$ satisfy 
 that $B(t)\in(0,1)$, $K\sqrt{B(t)} < R^n$ 
 and \eqref{condi;B1} hold
 for all $t\in[0,T)$. Suppose that
 \[
         \underline{w}(s,t) :=
           \begin{cases}
              w_{\rm in}(s,t)
              &\mbox{if}\ t\in[0,T)\ \mbox{and}\ 
              s\in \big[0,K\sqrt{B(t)}\big],
              \\[1mm] 
              w_{\rm out}(s,t)
              &\mbox{if}\ t\in[0,T)\ \mbox{and}\ 
              s\in \big(K\sqrt{B(t)},R^n\big],
           \end{cases}
 \]
where
 \begin{align}\label{def;win}
         w_{\rm in}(s,t) := A(t)\varphi(\xi),\quad\xi = \xi(s,t) := \frac{s}{B(t)}
 \end{align}
for $t \in[0,T)$, $s\in[0,K\sqrt{B(t)}]$ with $\varphi$ and $A$ introduced 
as \eqref{def;phi} and \eqref{def;A}, respectively, and where
 \begin{align}\label{def;wout}
         w_{\rm out}(s,t) := D(t)s+E(t)
 \end{align}
for $t\in[0,T)$ and $s\in[K\sqrt{B(t)},R^n]$ 
with $D$ and $E$ as in \eqref{def;D} 
and \eqref{def;E}, respectively. 
Then $\underline{w}$ is 
well-defined and satisfies 
\[
  \underline{w}\in C^1([0,R^n]\times[0,T))  
\] 
and $\underline{w}(\cdot,t)\in W^{2,\infty}(0,R^n)\cap 
C^2([0,R^n]\setminus\{B(t),K\sqrt{B(t)}\})$ 
for all $t\in[0,T)$ as well as
 \[
         \underline{w}(0,t) = 0\quad \mbox{and}\quad
          \underline{w}(R^n,t) = \frac{m}{\omega_n}
 \]
for all $t\in(0,T)$.
\end{lem}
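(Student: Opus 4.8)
The plan is to verify, in order, that $\underline{w}$ is well defined, that it glues together in a $C^{1}$ fashion across its two interior breakpoints, that it has the asserted $W^{2,\infty}\cap C^{2}$-regularity, and that it attains the prescribed boundary values; this is the same scheme as in the subsolution construction of \cite{BW}, and the only place requiring genuine computation is the $C^{1}$-matching at the inner/outer interface. First I would record that, under the stated hypotheses on $B$, Lemma~\ref{def;A,D,E,N} guarantees that $A,D,E,N\in C^{1}([0,T))$ are well defined (in particular $N(t)>0$ on $[0,T)$), so that $w_{\rm in}$ and $w_{\rm out}$ make sense. A one-line computation from \eqref{def;ab} gives $a_\lambda+b_\lambda=\frac{1+\lambda}{2}\ge\frac23$ for $\lambda\in[\frac13,1]$, whence \eqref{condi;B1} yields $B(t)\le\frac{9}{16}K^{2}<K^{2}$, i.e. $\sqrt{B(t)}<K$. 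This has two consequences: the breakpoint of $\varphi$ at $\xi=1$, i.e. $s=B(t)$, lies strictly inside the inner interval $[0,K\sqrt{B(t)}]$, and $B(t)<K\sqrt{B(t)}<R^{n}$, so the three sets on which $\underline{w}(\cdot,t)$ should be smooth are genuine intervals with distinct endpoints $B(t)$ and $K\sqrt{B(t)}$.

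The core step is the $C^{1}$-matching at the moving interface $s=K\sqrt{B(t)}$. Writing $\beta:=\sqrt{B(t)}$ and $\xi_{\ast}:=K/\beta$, the bound $\beta<K$ gives $\xi_{\ast}>1$, so the ``$\xi\ge1$'' branches in \eqref{def;phi}, \eqref{def;phi'} apply at the interface, and one computes
\[
 w_{\rm in}\bigl(K\beta,t\bigr)=A(t)\Bigl(1-\frac{a_\lambda\beta}{K-b_\lambda\beta}\Bigr),\qquad
 \partial_{s}w_{\rm in}\bigl(K\beta,t\bigr)=\frac{A(t)\,a_\lambda}{(K-b_\lambda\beta)^{2}}.
\]
Substituting $A(t)=\frac{m}{\omega_{n}}\cdot\frac{(K-b_\lambda\beta)^{2}}{N(t)}$ (the numerator of $A$ in \eqref{def;A} is exactly $(K-b_\lambda\beta)^{2}$) and using \eqref{def;D}, \eqref{def;E}, \eqref{def;N} together with the factorisation
\[
 K^{2}-(a_\lambda+b_\lambda)\bigl(2K\beta-b_\lambda\beta^{2}\bigr)+a_\lambda K\beta=(K-b_\lambda\beta)\bigl(K-(a_\lambda+b_\lambda)\beta\bigr),
\]
I would then check that
\[
 w_{\rm in}\bigl(K\beta,t\bigr)=\frac{m}{\omega_{n}}\cdot\frac{(K-b_\lambda\beta)\bigl(K-(a_\lambda+b_\lambda)\beta\bigr)}{N(t)}=D(t)K\beta+E(t)=w_{\rm out}\bigl(K\beta,t\bigr)
\]
and
\[
 \partial_{s}w_{\rm in}\bigl(K\beta,t\bigr)=\frac{m}{\omega_{n}}\cdot\frac{a_\lambda}{N(t)}=D(t)=\partial_{s}w_{\rm out}\bigl(K\beta,t\bigr).
\]
Since the first identity holds for every $t\in[0,T)$ and $t\mapsto K\sqrt{B(t)}$ is $C^{1}$, differentiating it along the interface and using the coincidence of the $s$-derivatives also forces $\partial_{t}w_{\rm in}=\partial_{t}w_{\rm out}$ there; hence the full gradient of $\underline{w}$ matches across $s=K\sqrt{B(t)}$.

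It then remains to assemble regularity and boundary data. On the inner region, $w_{\rm in}(s,t)=A(t)\varphi(s/B(t))$ is $C^{1}$ jointly in $(s,t)$ and $C^{2}$ off $\{s=B(t)\}$, since $\varphi\in C^{1}([0,\infty))\cap W^{2,\infty}(0,\infty)\cap C^{2}([0,\infty)\setminus\{1\})$ and $A,B\in C^{1}$ with $B>0$; on the outer region, $w_{\rm out}$ is affine in $s$ and $C^{1}$ in $t$. Combining these with the $C^{1}$-matching established above yields $\underline{w}\in C^{1}([0,R^{n}]\times[0,T))$ and, for each $t\in[0,T)$, $\underline{w}(\cdot,t)\in W^{2,\infty}(0,R^{n})\cap C^{2}([0,R^{n}]\setminus\{B(t),K\sqrt{B(t)}\})$. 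Finally $\varphi(0)=0$ gives $\underline{w}(0,t)=A(t)\varphi(0)=0$, while \eqref{def;E} gives $\underline{w}(R^{n},t)=w_{\rm out}(R^{n},t)=D(t)R^{n}+E(t)=\frac{m}{\omega_{n}}$.

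The main obstacle is precisely the algebraic identity behind the $C^{1}$-gluing at $s=K\sqrt{B(t)}$ --- this is exactly why $A,D,E,N$ are defined the way they are in Lemma~\ref{def;A,D,E,N}; everything else is bookkeeping. The one point to be careful about is invoking \eqref{condi;B1} to guarantee $\xi_{\ast}>1$, so that the correct (second) branch of $\varphi$ governs the interface, which is the role of the elementary bound $a_\lambda+b_\lambda\ge\frac23$.
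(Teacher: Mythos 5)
Your proposal is correct and follows essentially the same route as the paper, which simply defers to the gluing argument of [BW, Lemma~3.2]: the key content is the $C^1$-matching of $w_{\rm in}$ and $w_{\rm out}$ at $s=K\sqrt{B(t)}$ via the algebraic identities built into the definitions of $A,D,E,N$, together with the previously recorded regularity of $\varphi$ across $\xi=1$. Your computations (the factorisation of the numerator, $\partial_s w_{\rm in}=D(t)$ at the interface, and the chain-rule argument for the time derivatives) all check out, so no further comment is needed.
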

\begin{proof}
An argument similar to that in the proof of \cite[Lemma 3.2]{BW}
 implies the conclusion of this lemma.
\end{proof}
 
%
%
\subsection{Subsolution properties: Outer region.}\label{outer}

First, we will consider in the outer region. In the following 
lemma, we show that if the function $B$ constructing $\underline{w}$ 
is suitably small and fulfilling a differential inequality, 
then $\underline{w}$ is a subsolution of 
\eqref{lem;base} in the region.

%
%

\begin{lem}\label{estimate;outer}
 Let $n \in \N$, $\chi > 0$, $m > 0$, $\lambda\in[\frac{1}{3},1]$, 
 $K > 0$, $T>0$ and $B_0\in(0,1)$ fulfill  
 $K\sqrt{B_0} < R^n$ and 
  \[\label{condi;B2}
     B_0\leq \frac{K^2}{16(a_\lambda+b_\lambda)^2}  
  \]
with $a_\lambda$ and $b_\lambda$ taken from 
\eqref{def;ab}. Then if $B\in C^1([0,T))$ 
is positive and nonincreasing and satisfies that
 \begin{align}\label{condi;B3}
    	\begin{cases}
          B'(t)\geq -\dfrac{a_\lambda^{q-1}(nm)^q\chi K}
          {2(a_\lambda+b_\lambda)
          (K^2+a_\lambda R^n)^{q-1}\omega_n^qR^n\sqrt{1+K^{\frac{2}{n}-2}
          \cdot \frac{m^2}{\omega_n^2}}}\cdot B^{1-\frac{1}{2n}}(t),
         \\[4mm]
          B(0)\leq B_0
         \end{cases}
 \end{align}
 for all $t\in(0,T)$, 
 then 
 the function $w_{\rm out}$ given by \eqref{def;wout} fulfills that
 \begin{align}\label{ineq;Pwout}
     (\mathcal{P}w_{\rm out})(s,t) \leq 0
 \end{align}
for all t  $\in(0,T)$ and all $s\in(K\sqrt{B(t)},R^n)$
 with $\mathcal{P}$ defined in \eqref{def;P}.
\end{lem}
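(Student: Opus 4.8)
The plan is to compute $\mathcal{P}w_{\rm out}$ explicitly — this is easy because $w_{\rm out}(\cdot,t)$ is affine in $s$, so the second-order (diffusion) term in \eqref{def;P} vanishes — and then to reduce \eqref{ineq;Pwout} to a one-variable inequality saying that the chemotactic contribution dominates the time derivative of $w_{\rm out}$; the differential inequality \eqref{condi;B3} is tailored precisely to make this hold.

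First I would record, using $E(t)=\frac{m}{\omega_n}-R^nD(t)$ from \eqref{def;E}, that $w_{{\rm out},s}=D(t)\ge 0$, $w_{{\rm out},ss}=0$, and $w_{{\rm out},t}(s,t)=D'(t)(s-R^n)$. Since also $\frac{\mu}{n}=\frac{m}{\omega_nR^n}$ and $w_{\rm out}(R^n,t)=\frac{m}{\omega_n}$, the affine function $s\mapsto w_{\rm out}(s,t)-\frac{\mu}{n}s$ vanishes at $s=R^n$, hence equals $\bigl(D(t)-\frac{m}{\omega_nR^n}\bigr)(s-R^n)$. Substituting into \eqref{def;P} and factoring out $(s-R^n)$ yields
\[
(\mathcal{P}w_{\rm out})(s,t)=(s-R^n)\left[\,D'(t)-n^q\chi\,\frac{\bigl(D(t)-\frac{m}{\omega_nR^n}\bigr)D(t)^q}{\sqrt{1+s^{\frac{2}{n}-2}\bigl(D(t)-\frac{m}{\omega_nR^n}\bigr)^2(s-R^n)^2}}\,\right],
\]
and since $s-R^n<0$ on $(K\sqrt{B(t)},R^n)$, proving \eqref{ineq;Pwout} amounts to showing the bracket is nonnegative.

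Next I would fix signs. Writing $\delta(t):=(a_\lambda+b_\lambda)\bigl(2K\sqrt{B(t)}-b_\lambda B(t)\bigr)$ and using $a_\lambda+b_\lambda=\frac{\lambda+1}{2}>0$ together with the hypothesis $B_0\le\frac{K^2}{16(a_\lambda+b_\lambda)^2}$, which forces $0\le\delta(t)\le\frac{K^2}{2}$, one gets from Lemma \ref{def;A,D,E,N} that $N(t)=K^2+a_\lambda R^n-\delta(t)\in\bigl[\tfrac{K^2}{2}+a_\lambda R^n,\,K^2+a_\lambda R^n\bigr]$, that $D(t)\ge 0$ and $D(t)-\frac{m}{\omega_nR^n}=-\frac{m}{\omega_n}\cdot\frac{K^2-\delta(t)}{R^nN(t)}<0$ with $0<K^2-\delta(t)=N(t)-a_\lambda R^n$, and — since $B$ is nonincreasing and $\frac{K}{\sqrt{B(t)}}>b_\lambda$ — that $D'(t)\le 0$. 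Hence the bracket is the nonpositive term $D'(t)$ plus a nonnegative one, and its nonnegativity is exactly
\[
n^q\chi\,\frac{\bigl|D(t)-\frac{m}{\omega_nR^n}\bigr|\,D(t)^q}{\sqrt{1+s^{\frac{2}{n}-2}\bigl(w_{\rm out}(s,t)-\frac{\mu}{n}s\bigr)^2}}\ \ge\ |D'(t)|\qquad\text{for all }s\in\bigl(K\sqrt{B(t)},R^n\bigr).
\]

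The crux — which I expect to be the main obstacle — is to verify this uniformly in $s$. An elementary monotonicity computation shows $s\mapsto s^{\frac1n-1}(R^n-s)$ is strictly decreasing on $(0,R^n)$ for every $n\in\N$, so the left-hand side above is smallest at $s=K\sqrt{B(t)}$; there, using $\bigl|D-\frac{m}{\omega_nR^n}\bigr|=\frac{m}{\omega_n}\cdot\frac{K^2-\delta}{R^nN}$, the key observation $K^2-\delta\le N$, and $(R^n-K\sqrt B)^2\le R^{2n}$, one finds $s^{\frac2n-2}\bigl(w_{\rm out}(s,t)-\frac{\mu}{n}s\bigr)^2\le\frac{m^2}{\omega_n^2}K^{\frac2n-2}B^{\frac1n-1}(t)$, whence (as $B^{\frac1n-1}\ge 1$) $\sqrt{1+s^{\frac2n-2}(w_{\rm out}-\frac{\mu}{n}s)^2}\le B^{\frac1{2n}-\frac12}(t)\sqrt{1+K^{\frac2n-2}m^2/\omega_n^2}$. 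Plugging this and $D=\frac{m}{\omega_n}\frac{a_\lambda}{N}$ into the left-hand side, bounding the right-hand side via $|D'(t)|\le\frac{m}{\omega_n}\cdot\frac{a_\lambda(a_\lambda+b_\lambda)K}{\sqrt{B(t)}\,N^2(t)}|B'(t)|$, and invoking \eqref{condi;B3} — whose power $B^{1-\frac1{2n}}$ is exactly what makes all the powers of $B$ on the two sides cancel — the required inequality collapses, after elementary cancellations, to
\[
2\,\bigl(K^2-\delta(t)\bigr)\bigl(K^2+a_\lambda R^n\bigr)^{q-1}\ \ge\ K^2\,N(t)^{q-1},
\]
which holds because $2(K^2-\delta(t))\ge K^2$ and $N(t)^{q-1}\le(K^2+a_\lambda R^n)^{q-1}$ for $q\ge 1$. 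The only genuine care needed in this last step is to use $K^2-\delta\le N$ (rather than a cruder lower bound on $N$, which fails since $N$ need not exceed $K^2$) and to keep the bookkeeping of constants straight; the degenerate case $a_\lambda=0$ (i.e.\ $\lambda=1$), where $D\equiv 0$, $w_{\rm out}\equiv\frac{m}{\omega_n}$ and $(\mathcal{P}w_{\rm out})\equiv 0$, is handled separately.
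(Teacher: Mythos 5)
Your proposal is correct and follows essentially the same route as the paper's proof: both exploit $(w_{\rm out})_{ss}=0$, write $w_{\rm out}-\frac{\mu}{n}s=\bigl(\frac{m}{\omega_nR^n}-D(t)\bigr)(R^n-s)$, bound the square root by $B^{\frac{1}{2n}-\frac12}(t)\sqrt{1+K^{\frac2n-2}m^2/\omega_n^2}$, and use $N(t)\le K^2+a_\lambda R^n$ together with $K^2-2(a_\lambda+b_\lambda)K\sqrt{B(t)}\ge\frac{K^2}{2}$ to make \eqref{condi;B3} close the estimate. The only differences are organizational (you factor out $(s-R^n)$ at the start and reduce to a single scalar inequality, whereas the paper carries the factor $(R^n-s)$ through), so the argument is sound as written.
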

\begin{proof}
The proof is based on that of \cite[Lemma 3.3]{BW}.
Recalling that $E(t) = \frac{m}{\omega_n}-R^nD(t)$ 
for all $t\in(0,T)$ by \eqref{def;E}, we have  
 \begin{align}\label{equ;wout}
     w_{\rm out}(s,t) = D(t)s+E(t) 
     = \frac{m}{\omega_n}-D(t)\cdot (R^n-s)
 \end{align}
for all $t\in(0,T)$ and all $s\in(K\sqrt{B(t)},R^n)$. 
Straightforward calculations together with \eqref{def;D'} yield that
 \begin{align}\label{equ;wout_t}
     (w_{\rm out})_t(s,t) &= -D'(t)\cdot (R^n-s)
     \\
      &= -\frac{m}{\omega_n}\cdot\frac{a_\lambda
      (a_\lambda+b_\lambda)\Bigl(\frac{K}{\sqrt{B(t)}}
      -b_\lambda\Bigr)}{N^2(t)}\cdot B'(t)\cdot (R^n-s)\nonumber
 \end{align}
for all $t\in(0,T)$ and all $s\in(K\sqrt{B(t)},R^n)$ 
with $N$ as in \eqref{def;N}. 
In order to obtain an estimate for $\mathcal{P} w_{\rm out}$, 
noting from \eqref{equ;wout_t} and the fact $(w_{\rm out})_{ss}\equiv 0$  
that 
\begin{align}\label{iden;Pwout}
(\mathcal{P} w_{\rm out})(s,t) &= (w_{\rm out})_t(s,t)-n^{p+1}
     \cdot\frac{s^{2-\frac{2}{n}}\{(w_{\rm out})_s\}^p
     (w_{\rm out})_{ss}}
     {\sqrt{{(w_{\rm out})_s}^2+n^2s^{2-\frac{2}{n}}
     {(w_{\rm out})_{ss}}^2}}+I(s,t) 
    \\ \notag
    &= -\frac{m}{\omega_n}\cdot\frac{a_\lambda
      (a_\lambda+b_\lambda)\Bigl(\frac{K}{\sqrt{B(t)}}
      -b_\lambda\Bigr)}{N^2(t)}\cdot B'(t)\cdot (R^n-s) 
      +I(s,t),
\end{align}
where 
\begin{align}\label{def;I}
     I(s,t) := -
     n^q\chi\cdot\frac{(w_{\rm out}-
     \frac{\mu}{n}s)\cdot\{(w_{\rm out})_s\}^q}
     {\sqrt{1+s^{\frac{2}{n}-2}(w_{\rm out}-
     \frac{\mu}{n}s)^2}}  
\end{align}
for $t\in(0,T)$ and $s\in(K\sqrt{B(t)},R^n)$, we shall derive an estimate for $I(s,t)$. 
Since \eqref{equ;wout} holds, we first obtain from the identity 
$\frac{\mu}{n}=\frac{m}{\omega_n R^n}$ that 
 \begin{align}\label{equ;wout2}
    w_{\rm out}-\frac{\mu}{n}s &= 
     \frac{m}{\omega_n}-
     D(t)\cdot(R^n-s)-\frac{m}{\omega_n R^n}\cdot s
     \\ \nonumber
     &= \left(\frac{m}{\omega_n R^n}-D(t)\right)\cdot(R^n-s) 
 \end{align}
for all $t\in(0,T)$ and all $s\in(K\sqrt{B(t)},R^n)$. 
Noticing from 
arguments similar to those in the proof of \cite[Lemma 3.3]{BW} that 
 \[
    \sqrt{1+s^{\frac{2}{n}-2}\left(w_{\rm out}(s,t)
    -\frac{\mu}{n}s\right)^2}
    \leq
    \sqrt{1+K^{\frac{2}{n}-2}
    \cdot\frac{m^2}{\omega_n^2}}\cdot B^{\frac{1}{2n}
    -\frac{1}{2}}(t)
 \]
for all $t\in(0,T)$ and all $s\in(K\sqrt{B(t)},R^n)$,   
we infer from \eqref{def;I} and \eqref{equ;wout2} that 
 \begin{align}\label{inequ;I1}
     -I(s,t)\geq n^q\chi\frac{(\frac{m}{\omega_n R^n}
     -D(t))\cdot D^q(t)}{\sqrt{1+K^{\frac{2}{n}-2}
    \cdot\frac{m^2}{\omega_n^2}}}\cdot B^{\frac{1}{2}
    -\frac{1}{2n}}(t)\cdot (R^n-s)
 \end{align}
holds for all $t\in(0,T)$ and all $s\in(K\sqrt{B(t)},R^n)$. 
Here in light of the definitions of $D$ and $N$ 
(see \eqref{def;D} and \eqref{def;N}, respectively), we can see that  
 \begin{align}\label{equ3-1}
     \left(\frac{m}{\omega_n R^n}
     -D(t)\right)\cdot D^q(t) &= \left(\frac{m}{\omega_n R^n}
     -\frac{m}{\omega_n}\cdot\frac{a_\lambda}{N(t)}\right)
     \left(\frac{m}{\omega_n}\cdot\frac{a_\lambda}{N(t)}
     \right)^q
     \\
     &= \frac{m^{q+1}a_\lambda^q}{\omega_n^{q+1}
     N^q(t)}\cdot\frac{N(t)-a_\lambda R^n}{R^n N(t)}
     \notag
     \\
     &= \frac{m^{q+1}a_\lambda^q}{\omega_n^{q+1}
     N^2(t)}\cdot\frac{K^2-2(a_\lambda +b_\lambda)
     K\sqrt{B(t)}+(a_\lambda +b_\lambda)b_\lambda B(t)}
     {R^n {N(t)}^{q-1}}\notag
 \end{align}
for all $t\in(0,T)$ and all $s\in(K\sqrt{B(t)},R^n)$.  
Moreover, the fact 
$B_0<\frac{4K^2}{b_\lambda^2}$ by \eqref{condi;B2} leads to that 
 \begin{align}\label{inequ3-3}
     N(t) = K^2+a_\lambda R^n-(a_\lambda +b_\lambda)
    (2K\sqrt{B(t)}-b_\lambda B(t))\leq K^2+a_\lambda R^n
    \\ \nonumber
 \end{align}
for all $t\in(0,T)$, 
a combination of the relation 
\[
    K^2-2(a_\lambda+b_\lambda)K\sqrt{B(t)}
    \geq\frac{1}{2}K^2
\]
(by \eqref{condi;B1}),  
 \eqref{equ3-1}, \eqref{inequ3-3} and the fact $(a_\lambda+b_\lambda)
 b_\lambda B(t)\geq 0$ yields that
 \begin{align*}
     \left(\frac{m}{\omega_n R^n}
     -D(t)\right)\cdot D^q(t)&\geq  \frac{m^{q+1}a_\lambda^q}
     {\omega_n^{q+1}N^2(t)}\cdot\frac{K^2-2(a_\lambda 
     +b_\lambda )K\sqrt{B(t)}}{R^n 
     (K^2+a_\lambda R^n)^{q-1}}\notag
     \\
     &\geq  \frac{m^{q+1}a_\lambda^q}
     {\omega_n^{q+1}N^2(t)}\cdot\frac{K^2}{2R^n 
     (K^2+a_\lambda R^n)^{q-1}}\notag
     \\
     &=\frac{m}{\omega_n}\cdot\frac{a_\lambda}{N^2(t)}
     \cdot\frac{m^q a_\lambda^{q-1}}{\omega_n^q}\cdot
     \frac{K^2}{2R^n (K^2+a_\lambda R^n)^{q-1}}
 \end{align*}
for all $t\in(0,T)$.  
Therefore we verify from \eqref{inequ;I1} that 
 \begin{align}\label{inequ;I2}
     -I(s,t)\geq n^q \chi \cdot \frac{\frac{m}{\omega_n}
     \cdot\frac{a_\lambda}{N^2(t)}\cdot\frac
     {m^q a_\lambda^{q-1}}{\omega_n^q}\cdot
     \frac{K^2}{2R^n (K^2+a_\lambda R^n)^{q-1}}}
     {\sqrt{1+K^{\frac{2}{n}-2}\cdot\frac{m^2}
     {\omega_n^2}}}\cdot B^{\frac{1}{2}-\frac{1}{2n}}(t)
     \cdot(R^n-s)
 \end{align}
for all $t\in(0,T)$ and all $s\in(K\sqrt{B(t)},R^n)$.
Here putting 
 \[
 c_1 := \frac{a_\lambda^{q-1}(nm)^q\chi}
          {
          2(K^2+a_\lambda R^n)^{q-1}\omega_n^qR^n\sqrt{1+
          K^{\frac{2}{n}-2}
          \cdot \frac{m^2}{\omega_n^2}}} 
 \]
and using the fact 
$(a_\lambda +b_\lambda)b_\lambda B'(t) \leq 0$ for all $t\in(0,T)$, 
from \eqref{iden;Pwout} and \eqref{inequ;I2} we can confirm that 
 \begin{align}\label{inequ;Pwout}
      (\mathcal{P}w_{\rm out})(s,t)&\leq 
      \frac{m}{\omega_n}\cdot\frac{a_\lambda}{N^2(t)}
      \cdot(R^n-s)\cdot\Biggl\{\left(-\frac{(a_\lambda +b_\lambda)K}
      {\sqrt{B(t)}}+(a_\lambda +b_\lambda)b_\lambda\right)
      \cdot B'(t)
      \\
      &\quad\,-n^q\chi \cdot \frac{\frac{m^q a_\lambda^{q-1}}
      {\omega_n^q}\cdot\frac{K^2}{2R^n (K^2+a_\lambda
       R^n)^{q-1}}}{\sqrt{1+K^{\frac{2}{n}-2}\cdot\frac
       {m^2}{\omega_n^2}}}\cdot B^{\frac{1}{2}-\frac{1}{2n}}
       (t)\Biggr\} \notag
       \\
       &\leq \frac{m}{\omega_n}\cdot\frac{a_\lambda}
       {N^2(t)}\cdot(R^n-s)\cdot \Biggl\{ -\frac{(a_\lambda 
       +b_\lambda)K}{\sqrt{B(t)}}\cdot B'(t)\notag
      \\
      &\quad\,-\frac{a_\lambda^{q-1}(nm)^q \chi K^2}
      {2(K^2+a_\lambda R^n)^{q-1}\omega_n^q R^n
      \sqrt{1+K^{\frac{2}{n}-2}\cdot \frac{m^2}
      {\omega_n^2}}}\cdot B^{\frac{1}{2}-\frac{1}{2n}}(t) \Biggr\}
       \notag 
       \\
       &= \frac{m}{\omega_n}\cdot\frac{a_\lambda}
       {N^2(t)}\cdot(R^n-s)\cdot \Biggl\{ -\frac{(a_\lambda 
       +b_\lambda)K}{\sqrt{B(t)}}\cdot B'(t)\notag
      -c_1 K^2 B^{\frac{1}{2}-\frac{1}{2n}}(t) \Biggr\}
 \end{align}
for all $t\in(0,T)$ and all $s\in(K\sqrt{B(t)},R^n)$. 
Thanks to \eqref{condi;B3}, 
we finally derive that
 \begin{align*}
     -\frac{(a_\lambda +b_\lambda)K}{\sqrt{B(t)}}\cdot 
     B'(t)-c_1K^2 B^{\frac{1}{2}-\frac{1}{2n}}(t)
     &=\frac{(a_\lambda +b_\lambda)K}{\sqrt{B(t)}}\cdot 
      \left\{-B'(t)-\frac{c_1 K}{(a_\lambda +b_\lambda)}
     B^{1-\frac{1}{2n}}(t)\right\}
     \\
     &\leq 0
 \end{align*}
for all $t \in(0,T)$ and all $s\in(K\sqrt{B(t)},R^n)$, 
which together with 
\eqref{inequ;Pwout} implies \eqref{ineq;Pwout} holds. 
\end{proof}

%
%
\subsection{Subsolution properties: Inner region.}\label{inner}

In this subsection 
 we will consider in the inner region. In the following 
lemma 
 we provide calculations of $\mathcal{P}w_{\rm in}$ 
and properties of the function $A$ defined as \eqref{def;A} constructing 
$\underline{w}$ in the corresponding region.   

%
%

\begin{lem}\label{tool;inner}
 Let $n \in \N$, $m>0$, $\lambda\in[\frac{1}{3},1]$, $K > 0$ 
 be such that $K \geq \sqrt{b_\lambda R^n}$, and $T>0$, and 
 let $B \in C^1([0,T))$ be positive 
 and fulfill \eqref{condi;B1} and 
 $K\sqrt{B(t)} < R^n$ for all $t\in[0,T)$. 
 Then the function $w_{\rm in}$ defined as  
 \eqref{def;win} satisfies that 
 \begin{align}\label{equ;Pwin1}
    (\mathcal{P}w_{\rm in})(s,t) = A'(t)\varphi(\xi) 
    + \frac{A(t)\varphi'(\xi)}{B(t)}\cdot\{-\xi B'(t)+J_1(s,t)
    +J_2(s,t)\}
 \end{align}
for all $t\in(0,T)$ and all $s\in(0,K\sqrt{B(t)})\setminus\{B(t)\}$, 
where $\xi = \xi(s,t) = \frac{s}{B(t)}$, $\mathcal{P}$ 
is given by \eqref{def;P}, and
 \begin{align}\label{def;J1}
   J_1(s,t) := -n^{p+1}\cdot\frac{\xi^{2-\frac{2}{n}}
   \varphi''(\xi)}{\sqrt{B^{\frac{4}{n}-2}(t)\varphi'(\xi)
   +n^2B^{\frac{2}{n}-2}(t)\xi^{2-\frac{2}{n}}\varphi''^2
   (\xi)}}\cdot\left(\frac{A(t)\varphi'(\xi)}{B(t)}\right)^{p-1}
 \end{align}
as well as
 \begin{align}\label{def;J2}
   J_2(s,t) := -n^q \chi \cdot\frac{A(t)\varphi(\xi)-
   \frac{\mu}{n}B(t)\xi}{\sqrt{1+B^{\frac{2}{n}-2}(t)
   \xi^{\frac{2}{n}-2}(A(t)\varphi(\xi)-\frac{\mu}{n}B(t)
   \xi)^2}}\cdot\left(\frac{A(t)\varphi'(\xi)}{B(t)}\right)^{q-1}
 \end{align}
for $t\in(0,T)$ and $s\in(0,K\sqrt{B(t)})\setminus\{B(t)\}$. 
Moreover, the function $A$ defined as \eqref{def;A} fulfills  
  \begin{align}\label{ineq;A}
   A'(t) \leq 0 
 \end{align}
for all $t\in(0,T)$\/{\rm ;} in particular, 
 \begin{align}\label{def;AT}
   A(t) \geq A_T := \frac{m}{\omega_n}\cdot \frac{1}
   {1+\frac{a_\lambda R^n}{K^2}}
 \end{align}
holds for all $t\in(0,T)$. 
\end{lem}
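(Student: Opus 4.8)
The plan is to verify the three assertions --- the identity \eqref{equ;Pwin1}, the monotonicity \eqref{ineq;A}, and the lower bound \eqref{def;AT} --- by direct computation; none of them is deep, and the only mildly delicate point is tracking the powers of $B(t)$ under the two radicals when evaluating $\mathcal{P}w_{\rm in}$.

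\emph{The identity \eqref{equ;Pwin1}.} Fix $t\in(0,T)$ and $s\in(0,K\sqrt{B(t)})\setminus\{B(t)\}$, and put $\xi=\xi(s,t)=s/B(t)$. Since $a_\lambda+b_\lambda=\frac{\lambda+1}{2}\in[\frac23,1]$, condition \eqref{condi;B1} gives $B(t)\le\frac{K^{2}}{4(a_\lambda+b_\lambda)^{2}}<K^{2}$, i.e.\ $\sqrt{B(t)}<K$; hence $s=B(t)$ indeed lies in the open interval $(0,K\sqrt{B(t)})$, and away from that point $\varphi$ is of class $C^{2}$, so $w_{\rm in}=A(t)\varphi(\xi)$ is smooth enough near $(s,t)$ and $\mathcal{P}w_{\rm in}$ is well defined there. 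From $\xi_t=-\xi\,B'(t)/B(t)$ and $\xi_s=1/B(t)$ one computes
\[
 (w_{\rm in})_t=A'(t)\varphi(\xi)-\frac{A(t)\varphi'(\xi)}{B(t)}\,\xi\,B'(t),\qquad
 (w_{\rm in})_s=\frac{A(t)\varphi'(\xi)}{B(t)},\qquad
 (w_{\rm in})_{ss}=\frac{A(t)\varphi''(\xi)}{B^{2}(t)}.
\]
Inserting these into \eqref{def;P} together with $s=B(t)\xi$ (so that $s^{2-\frac2n}=B^{2-\frac2n}(t)\,\xi^{2-\frac2n}$ and $s^{\frac2n-2}=B^{\frac2n-2}(t)\,\xi^{\frac2n-2}$), and pulling the positive factor $\frac{A(t)\varphi'(\xi)}{B(t)}=(w_{\rm in})_s$ (note $\varphi'>0$) out of both the diffusion term and the aggregation term of $\mathcal{P}$, one recognizes those two terms as precisely $\frac{A(t)\varphi'(\xi)}{B(t)}\,J_1(s,t)$ and $\frac{A(t)\varphi'(\xi)}{B(t)}\,J_2(s,t)$ with $J_1,J_2$ as in \eqref{def;J1} and \eqref{def;J2}; adding the expression for $(w_{\rm in})_t$ above then gives \eqref{equ;Pwin1}. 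This is the main --- though still routine --- obstacle of the lemma: one must check that the exponents of $B(t)$ under the two square roots assemble into exactly the $J_1$, $J_2$ of \eqref{def;J1}--\eqref{def;J2}.

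\emph{The inequality \eqref{ineq;A}.} Here it suffices to inspect the signs in the formula for $A'$ recorded in Lemma~\ref{def;A,D,E,N}, namely
\[
 A'(t)=\frac{m}{\omega_n}\cdot
 \frac{\bigl(\frac{K}{\sqrt{B(t)}}-b_\lambda\bigr)\cdot a_\lambda\,(K^{2}-b_\lambda R^{n})\cdot B'(t)}{N^{2}(t)}.
\]
One has $\frac{m}{\omega_n}>0$ and $N^{2}(t)>0$; $a_\lambda(K^{2}-b_\lambda R^{n})\ge0$ since $a_\lambda\ge0$ and $K\ge\sqrt{b_\lambda R^{n}}$; $\frac{K}{\sqrt{B(t)}}-b_\lambda>0$ since $K\sqrt{B(t)}<R^{n}$ forces $\frac{K}{\sqrt{B(t)}}>\frac{K^{2}}{R^{n}}\ge b_\lambda$; and $B'(t)\le0$ since $B$ is nonincreasing. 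The product of these four factors is $\le0$, so $A'(t)\le0$.

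\emph{The bound \eqref{def;AT}.} Rewriting the numerator of \eqref{def;A} as $K^{2}-2b_\lambda K\sqrt{B(t)}+b_\lambda^{2}B(t)=\bigl(K-b_\lambda\sqrt{B(t)}\bigr)^{2}$ and cross-multiplying by the positive denominators $N(t)$ and $K^{2}+a_\lambda R^{n}$, the claim $A(t)\ge A_T=\frac{m}{\omega_n}\cdot\frac{K^{2}}{K^{2}+a_\lambda R^{n}}$ becomes $\bigl(K-b_\lambda\sqrt{B(t)}\bigr)^{2}(K^{2}+a_\lambda R^{n})\ge K^{2}N(t)$. Expanding $N(t)=K^{2}+a_\lambda R^{n}-(a_\lambda+b_\lambda)(2K\sqrt{B(t)}-b_\lambda B(t))$, the difference of the two sides simplifies to $a_\lambda(K^{2}-b_\lambda R^{n})\sqrt{B(t)}\bigl(2K-b_\lambda\sqrt{B(t)}\bigr)$, which is nonnegative because $K^{2}\ge b_\lambda R^{n}$ and because $K\sqrt{B(t)}<R^{n}$ forces $b_\lambda\sqrt{B(t)}<b_\lambda R^{n}/K\le K<2K$. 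Equivalently, the sign computation of the previous paragraph shows that $A$, regarded as a function of the single variable $B(t)$, is nondecreasing, while the right-hand side of \eqref{def;A} tends to $A_T$ as $B(t)\downarrow0$; since $B(t)>0$ this again yields $A(t)\ge A_T$, completing the proof.
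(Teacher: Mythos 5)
Your computation is correct and is exactly the ``straightforward calculation'' that the paper delegates to \cite[Lemmas 3.4 and 3.5]{BW}: the chain-rule identities for $(w_{\rm in})_t$, $(w_{\rm in})_s$, $(w_{\rm in})_{ss}$, the factorization of $(w_{\rm in})_s=A\varphi'/B$ out of both nonlinear terms of $\mathcal{P}$, and the sign analysis of $A'$ are all as intended. Two small remarks. First, your verification of \eqref{ineq;A} invokes $B'(t)\le 0$, which is not among the stated hypotheses of the lemma; this is an omission in the statement rather than in your argument (the inequality genuinely fails for increasing $B$ when $a_\lambda(K^2-b_\lambda R^n)>0$, and every application of the lemma has $B$ nonincreasing), but it is worth observing that your direct algebraic proof of \eqref{def;AT} --- expanding $(K-b_\lambda\sqrt{B(t)})^2(K^2+a_\lambda R^n)-K^2N(t)=a_\lambda(K^2-b_\lambda R^n)\sqrt{B(t)}\,(2K-b_\lambda\sqrt{B(t)})\ge 0$ --- needs no monotonicity of $B$ at all, and is therefore slightly cleaner than deducing \eqref{def;AT} from \eqref{ineq;A} by letting $B\downarrow 0$. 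Second, if you actually carry out the factorization of the diffusion term you obtain $B^{\frac{4}{n}-2}(t)\varphi'^2(\xi)$, not $B^{\frac{4}{n}-2}(t)\varphi'(\xi)$, as the first summand under the radical in \eqref{def;J1}; the printed formula contains a typo, as one can confirm against the $n=1$ specialization used in the proof of Lemma \ref{estimate;very inner}, so your claim that the terms assemble into ``exactly'' \eqref{def;J1} should be read with that correction.
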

\begin{proof}
Aided by arguments similar to those in the proofs of 
\cite[Lemmas 3.4 and 3.5]{BW}, 
from straightforward calculations we can attain 
the conclusion of this lemma.
\end{proof}

%
%
 
\subsection{Subsolution properties: Very inner region.}\label{veryinner}
In this subsection we will consider the case that 
$\xi=\frac{s}{B(t)}\in(0,1)$ which means $0<s<B(t)$. 
In order to show $\mathcal{P}w_{\rm in}\leq 0$ 
we have to see that for some $C>0$ and some $\beta\geq0$,
 \begin{align}\label{est;J12}
    J_1(s,t)+J_2(s,t)\leq-CB^\beta(t)
 \end{align}
holds for all $t\in(0,T)$ and all $s\in(0,B(t))$ with $J_1$ and 
$J_2$ as in \eqref{def;J1} 
and \eqref{def;J2}, respectively. Thanks to the convexity of 
$\varphi$ for $\xi \in (0,1)$, we obtain the term $J_1$ is negative 
in this region. However, it seems to be difficult to show \eqref{est;J12} 
in the case that $p<q$. Indeed, when $\varphi(\xi)=\lambda\xi^2$ 
which is used in \cite{BW}, 
if $B(t)$ is close to $0$, then we obtain from an arguments that
 \[
   J_1(s,t)+J_2(s,t)\geq -C_1B^{1-\frac{1}{n}}(t)\xi^{1-\frac{1}{n}}
   \cdot\left(\frac{A(t)\varphi'(\xi)}{B(t)}\right)^{p-1}  +C_2B(t)\xi
   \cdot\left(\frac{A(t)\varphi'(\xi)}{B(t)}\right)^{q-1}\geq 0
 \] 
with some $C_1,C_2>0$. Thus, we modify a function 
$\varphi$ on $(0,1)$ from \cite{BW} to infer that even though $B(t)$ 
is suitably small, the term $A(t)\varphi(\xi)-\frac{\mu}{n}B(t)\xi$ 
is positive which means $J_2<0$ for $t\in(0,T)$ and $\xi \in(0,B(t))$.  
Furthermore, we divide the estimate for $\mathcal{P}w_{\rm in}$  
into the case $n=1$ and the case $n\geq 2$ to achieve our purpose.
%
%
 
\begin{lem}\label{tool;very inner}
 Let $n \in \N$, $m > 0$, $\lambda\in[\frac{1}{3},1]$, and 
 $K > 0$ be such that $K \geq \sqrt{b_\lambda R^n}$,  
 and let $B_0\in(0,1)$ be such that $K\sqrt{B_0}< R^n$
and
 \begin{align}\label{condi;B_0_1}
     B_0\leq \frac{K^2}{4(a_\lambda+b_\lambda)^2}   
 \end{align}
as well as
 \begin{align}\label{condi;B_0_2}
   B_0 < \frac{2\lambda nA_T}{e^d\mu}
 \end{align}
with $\mu$, $d$ and $A_T$ given by \eqref{def;mu}, 
\eqref{def;d} and \eqref{def;AT}, respectively.
 Then, under the condition that for some $T > 0$, $B\in C^1([0,T))$
  is a positive and nonincreasing function satisfying
   $B(0)\leq B_0$, the inequality 
 \begin{align}\label{keyineq1}
   A(t)\varphi(\xi)-\frac{\mu}{n}B(t)
    \xi > 
    0
 \end{align}
 holds for all $t\in (0,T)$ and all $s\in(0,B(t))$ with $\varphi$ and $A$ as in \eqref{def;phi} and \eqref{def;A}, 
 respectively.
  \end{lem}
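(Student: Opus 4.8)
The plan is to exploit the explicit exponential form of $\varphi$ on $[0,1)$ together with the fact that $A$ is nonincreasing (Lemma \ref{tool;inner}, inequality \eqref{ineq;A}), so that $A(t)\geq A_T$ for all $t\in(0,T)$, and that $B$ is nonincreasing with $B(0)\leq B_0$, so that $B(t)\leq B_0$ for all $t\in(0,T)$. Fixing $t\in(0,T)$ and $s\in(0,B(t))$, write $\xi=\xi(s,t)=s/B(t)\in(0,1)$, and recall from \eqref{def;phi} that $\varphi(\xi)=\frac{2\lambda}{de^d}(e^{d\xi}-1)$ on this range. Since $\mu/n$ is the constant $\frac{m}{\omega_n R^n}$, the quantity we must show positive is
\[
  A(t)\varphi(\xi)-\frac{\mu}{n}B(t)\xi
  = A(t)\cdot\frac{2\lambda}{de^d}(e^{d\xi}-1)-\frac{\mu}{n}B(t)\xi .
\]
First I would lower-bound $A(t)$ by $A_T$ and upper-bound $B(t)$ by $B_0$, reducing the claim to showing
\[
  A_T\cdot\frac{2\lambda}{de^d}(e^{d\xi}-1)\geq \frac{\mu}{n}B_0\,\xi
  \qquad\text{for all }\xi\in(0,1).
\]

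The next step is an elementary convexity estimate: since $e^{d\xi}-1\geq d\xi$ for all $\xi\geq 0$ (this is just $e^x\geq 1+x$ with $x=d\xi$), we have $\frac{2\lambda}{de^d}(e^{d\xi}-1)\geq \frac{2\lambda}{de^d}\cdot d\xi=\frac{2\lambda}{e^d}\xi$. Hence it suffices to verify
\[
  A_T\cdot\frac{2\lambda}{e^d}\,\xi\geq\frac{\mu}{n}B_0\,\xi,
\]
and dividing by $\xi>0$ this is exactly $\frac{2\lambda n A_T}{e^d}\geq \mu B_0$, i.e. $B_0\leq \frac{2\lambda n A_T}{e^d\mu}$. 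Since hypothesis \eqref{condi;B_0_2} gives the strict inequality $B_0<\frac{2\lambda n A_T}{e^d\mu}$, all the above inequalities hold with at least one of them strict (in fact the reduction $B(t)\le B_0$, $A(t)\ge A_T$ combined with the strict \eqref{condi;B_0_2} already forces strictness), so $A(t)\varphi(\xi)-\frac{\mu}{n}B(t)\xi>0$, which is \eqref{keyineq1}. One should note that \eqref{condi;B_0_1} and the constraint $K\sqrt{B_0}<R^n$ are exactly what is needed so that $A$, $D$, $E$, $N$ from Lemma \ref{def;A,D,E,N} and hence $A_T$ are well defined and \eqref{ineq;A} applies; the positivity of $B$ guarantees $\xi$ is well defined and that dividing by $\xi$ is legitimate.

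I do not expect a serious obstacle here: the lemma is essentially designed so that the new exponential profile on $(0,1)$ makes the chemotactic term $J_2$ have the right sign, and the only quantitative input is the linear lower bound $e^{d\xi}-1\ge d\xi$ together with the monotonicity of $A$ and $B$. The one point requiring a little care is bookkeeping the direction of the inequalities — making sure that replacing $A(t)$ by its lower bound $A_T$ and $B(t)$ by its upper bound $B_0$ both work in the favorable direction (they do, since $\varphi(\xi)>0$ and $\xi>0$ so both replacements only decrease the left side or increase the subtracted term) — and confirming that \eqref{condi;B_0_2} is used with the correct constant $A_T$ from \eqref{def;AT} rather than a generic bound on $A(t)$. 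Everything else is a one-line convexity computation.
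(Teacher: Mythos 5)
Your proposal is correct and follows essentially the same route as the paper: bound $A(t)\ge A_T$ and $B(t)\le B_0$ by monotonicity, then use the elementary inequality $e^{d\xi}-1\ge d\xi$ (the paper writes it as $\frac{e^{d\xi}-1}{d\xi}\ge 1$) to reduce the claim to the strict hypothesis \eqref{condi;B_0_2}. No gaps.
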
 
 \begin{proof}
We write $\xi=\frac{s}{B(t)}$ for $t\in(0,T)$ and $s \in(0,B(t))$. 
According to \eqref{def;AT} and $B(t)\leq B(0)\leq B_0$, we obtain from \eqref{def;phi} 
that
 \begin{align}\label{keyestimate1}
    A(t)\varphi(\xi)-\frac{\mu}{n}B(t)\xi&\geq A_T\cdot\varphi(\xi)
    -\frac{\mu}{n}B_0\cdot\xi
    \\ \nonumber
    &=\frac{\mu}{n}\xi \left\{\frac{nA_T}{\mu}\cdot\frac{\varphi(\xi)}{\xi}
    -B_0\right\}
    \\ \nonumber
    &=\frac{\mu}{n}\xi \left\{\frac{2\lambda nA_T}{e^d\mu}\cdot
    \frac{e^{d\xi}-1}{d
    \xi}-B_0\right\}        
 \end{align}
 for all $t\in(0,T)$ and all $s\in(0,B(t))$.
 Thanks to that  
 \begin{align}\label{est2}
 \dfrac{e^{d\xi}-1}{d
 \xi}\geq 1
 \end{align}
 for all $\xi\in(0,1)$, we infer from \eqref{condi;B_0_2} and 
 \eqref{keyestimate1} that
\begin{align*}
    A(t)\varphi(\xi)-\frac{\mu}{n}B(t)\xi&\geq \frac{\mu}{n}\xi 
    \left\{\frac{2\lambda nA_T}{e^d\mu}\cdot\frac{e^{d\xi}-1}{d
    \xi}-B_0\right\}      
    \\ \nonumber
    &\geq \frac{\mu}{n}\xi \left\{\frac{2\lambda nA_T}{e^d\mu}-B_0\right\}>0
 \end{align*}
 for all $t\in(0,T)$ and all $s\in(0,B(t))$.
\end{proof}
%
%
Invoking Lemma \ref{tool;very inner}, under the assumption 
that the function $B$ is small and satisfies a suitable inequality, 
we derive that $\underline{w}$ becomes 
a 
 subsolution of \eqref{lem;base}. 
First, 
we will note when $n=1$. In this case, thanks to definition of 
$\varphi$ on $(0,1)$, for some $C>0$ we can estimate that 
$J_1\leq-C$ and show the purpose of this subsection.
 \begin{lem}\label{estimate;very inner}
 Let $n=1$, $m > 0$, $\lambda\in[\frac{1}{3},1]$ and   
 $K > 0$ be such that $K \geq \sqrt{b_\lambda R^n}$,
 and let $B_0\in(0,1)$ be such that $K\sqrt{B_0}\leq R^n$, 
 \eqref{condi;B_0_1} and \eqref{condi;B_0_2} hold. 
 For some $T>0$, if $B\in C^1([0,T))$
satisfies that  
  \begin{align}\label{condi;B04}
    \begin{cases}
    B'(t)\geq -\dfrac{d}{\sqrt{d^2+1}}\cdot 
    \left(\dfrac{2\lambda A_T}{e^d}\right)^{p-1},
    \\[4mm]
    B(0)\leq B_0
    \end{cases}
  \end{align}
for all $t\in(0,T)$ 
 with $d$ and $A_T$ as in \eqref{def;d} and \eqref{def;AT}, 
respectively, then the function 
$w_{\rm in}$ defined as \eqref{def;win} has the property that
  \[\label{ineq;Pwin01}
     (\mathcal{P}w_{\rm in})(s,t) \leq 0
  \]
for all $t\in(0,T)$ and all $s\in (0,B(t))$.
\end{lem}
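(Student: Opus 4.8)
The plan is to insert the identity \eqref{equ;Pwin1} of Lemma~\ref{tool;inner} into a chain of one‑sided estimates. On the very inner region $0<s<B(t)$ one has $\xi:=s/B(t)\in(0,1)$, and \eqref{equ;Pwin1} reads
\[
(\mathcal{P}w_{\rm in})(s,t)=A'(t)\varphi(\xi)+\frac{A(t)\varphi'(\xi)}{B(t)}\left\{-\xi B'(t)+J_1(s,t)+J_2(s,t)\right\}.
\]
By \eqref{def;phi} and \eqref{def;phi'} we have $\varphi(\xi)>0$ and $\varphi'(\xi)>0$ on $(0,1)$; by \eqref{ineq;A} and \eqref{def;AT} we have $A'(t)\le0$ and $A(t)\ge A_T>0$; and $B(t)>0$. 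Hence the first summand is nonpositive and the prefactor $A(t)\varphi'(\xi)/B(t)$ is positive, so everything reduces to proving $-\xi B'(t)+J_1(s,t)+J_2(s,t)\le0$ for all $t\in(0,T)$ and $s\in(0,B(t))$.

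First I would dispose of the chemotactic term $J_2$. The smallness hypotheses \eqref{condi;B_0_1} and \eqref{condi;B_0_2} are precisely those of Lemma~\ref{tool;very inner}, so \eqref{keyineq1} gives $A(t)\varphi(\xi)-\frac{\mu}{n}B(t)\xi>0$; since moreover $\varphi'>0$ and $\chi>0$, the definition \eqref{def;J2} forces $J_2(s,t)<0$. Thus it is enough to show $-\xi B'(t)+J_1(s,t)\le0$. Using that $B$ is (as throughout) positive and nonincreasing we have $B'(t)\le0$, and since $0<\xi<1$ it then suffices to prove the $\xi$-free inequality $J_1(s,t)\le B'(t)$: indeed $-\xi B'(t)+J_1(s,t)\le(1-\xi)B'(t)\le0$ follows.

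The core of the argument is the estimate of $J_1$ for $n=1$. Putting $n=1$ in \eqref{def;J1} collapses every power of $n$ and the factor $\xi^{2-2/n}$ to $1$, and the exponential profile \eqref{def;phi}--\eqref{def;phi''} enjoys the decisive identity $\varphi''(\xi)=d\,\varphi'(\xi)$ on $(0,1)$, which cancels the square root and gives
\[
J_1(s,t)=-\frac{d}{\sqrt{B^2(t)+d^2}}\left(\frac{A(t)\varphi'(\xi)}{B(t)}\right)^{p-1}.
\]
Since $B(t)\le B(0)\le B_0<1$ we get $d/\sqrt{B^2(t)+d^2}\ge d/\sqrt{d^2+1}$, while $A(t)\ge A_T$, $\varphi'(\xi)=2\lambda e^{d(\xi-1)}\ge 2\lambda/e^d$ and $B(t)<1$ give $A(t)\varphi'(\xi)/B(t)\ge 2\lambda A_T/e^d$; as $p\ge1$ the map $x\mapsto x^{p-1}$ is nondecreasing on $(0,\infty)$, so with the differential inequality \eqref{condi;B04} we conclude
\[
J_1(s,t)\le-\frac{d}{\sqrt{d^2+1}}\left(\frac{2\lambda A_T}{e^d}\right)^{p-1}\le B'(t).
\]
Combining this with $J_2<0$ yields $-\xi B'(t)+J_1(s,t)+J_2(s,t)\le0$, and substituting back into \eqref{equ;Pwin1} gives $(\mathcal{P}w_{\rm in})(s,t)\le0$, which is the claim.

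I do not expect a genuine obstacle within this lemma itself: the real difficulty — keeping $J_1$ bounded away from $0$ uniformly in the (arbitrarily small) value of $B(t)$ while at the same time keeping $A(t)\varphi(\xi)-\frac{\mu}{n}B(t)\xi$ positive, which fails for the quadratic profile used in \cite{BW} — has already been resolved upstream through the choice of $\varphi$ and through Lemma~\ref{tool;very inner}. What requires care here is the bookkeeping: checking at the outset that the smallness conditions on $B_0$ entail all hypotheses of Lemmas~\ref{tool;inner} and~\ref{tool;very inner}, tracking the signs $\varphi,\varphi'>0$, $A'\le0$, $B'\le0$, $0<\xi<1$, and invoking $\varphi''\equiv d\varphi'$ at the single point where it makes the estimate for $J_1$ collapse to a $B$-independent constant.
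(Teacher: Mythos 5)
Your proposal is correct and follows essentially the same route as the paper: reduce via \eqref{equ;Pwin1} using $A'\le 0$ and $\varphi,\varphi'>0$, kill $J_2$ with Lemma \ref{tool;very inner}, and bound $J_1$ from below using $\varphi''=d\varphi'$ on $(0,1)$ and $B(t)<1$ to get the $B$-independent constant $\frac{d}{\sqrt{d^2+1}}\bigl(\frac{2\lambda A_T}{e^d}\bigr)^{p-1}$, then invoke \eqref{condi;B04}. The only cosmetic difference is the last step (you use $(1-\xi)B'(t)\le 0$ with $B'\le 0$, while the paper factors out $\xi$ and uses $\xi\le 1$), and you rightly make explicit the monotonicity of $B$ that the paper's statement leaves implicit.
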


\begin{proof}
 Writing $\xi=\frac{s}{B(t)}$ for $t\in(0,T)$ and $s \in(0,B(t))$, 
 we establish that 
 \begin{align*}
      \frac{B^2(t)\varphi'^2(\xi)}{\varphi''^2(\xi)}=B^2(t)\cdot
      \frac{(2\lambda e^{d(\xi-1)})^2}{(2d\lambda e^{d(\xi-1)})^2}=B^2(t)
      \cdot\frac{1}{d^2}\leq \frac{1}{d^2}
 \end{align*}
 for all $t\in(0,T)$ and all $s\in(0,B(t))$. The inequality leads to that
\begin{align}\label{est;J1}
      -J_1(s,t)&=\frac{\varphi''(\xi)}{\sqrt{B^2(t)\varphi'^2(\xi)+\varphi''^2(\xi)}}
      \cdot\left(\frac{A(t)\varphi'(\xi)}{B(t)}\right)^{p-1}
      \\ \nonumber
     &\geq
     \frac{\varphi''(\xi)}{\sqrt{\left(\frac{1}{d^2}+1\right)
     \varphi''^2(\xi)}}\cdot \left(\frac{A(t)\varphi'(\xi)}{B(t)}\right)^{p-1}
      \\ \nonumber
     &=\frac{d}{\sqrt{d^2+1}}\cdot \left(\frac{A(t)\varphi'(\xi)}{B(t)}\right)^{p-1}
\end{align}
for all $t\in(0,T)$ and all $s\in(0,B(t))$.  Thanks to \eqref{def;AT} 
and $\varphi'(\xi)\geq \varphi'(0)=\frac{2\lambda}{e^d}$, 
we infer from $B(t)<1$ that
\begin{align}\label{est1}
      \left(\frac{A(t)\varphi'(\xi)}{B(t)}\right)^{p-1}\geq 
      \left(\frac{A_T\cdot \frac{2\lambda}{e^d}}{1}\right)^{p-1}
      =\left(\frac{2\lambda A_T}{e^d}\right)^{p-1}
\end{align}
for all $t\in(0,T)$ and all $s\in(0,B(t))$. On the other hand, 
using \eqref{keyineq1}, we have 
\begin{align}\label{est;J2}
      J_2< 
      0
\end{align} for all $t\in(0,T)$ and all $s\in(0,B(t))$.
 Thus, plugging \eqref{est;J1} and \eqref{est;J2} into \eqref{equ;Pwin1}, 
 we derive from 
 \eqref{condi;B04}, \eqref{est1} 
 and the inequality $\xi\leq1$ 
 that
\begin{align*}
     \frac{B(t)}{A(t)\varphi'(\xi)}(\mathcal{P} w_{\rm in})(s,t)
    &\leq -\xi B'(t)+J_1(s,t)+J_2(s,t)
    \\ \nonumber
    &\leq - \xi B'(t)-\frac{d}{\sqrt{d^2+1}}\cdot \left(\frac{A(t)
    \varphi'(\xi)}{B(t)}\right)^{p-1}
        \\ \nonumber
    &\leq - \xi B'(t)-\frac{d}{\sqrt{d^2+1}}\cdot 
    \left(\frac{2\lambda A_T}{e^d}\right)^{p-1}
    \\ \nonumber
    &\leq \xi\left\{-B'(t)-\frac{d}{\sqrt{d^2+1}}\cdot 
    \left(\frac{2\lambda A_T}{e^d}\right)^{p-1}\right\}
    \\ \nonumber
    &\leq 0
\end{align*}
for all $t\in(0,T)$ and all $s\in(0,B(t))$, and it concludes the proof.
\end{proof}
%
%
In the case $n\geq2$,  
if the function $B$ 
 is suitably small, for some $C>0$ we obtain that 
$J_2\leq -C$ and it leads to achievement of the purpose.
\begin{lem}\label{estimate2;veryinner}
 Let $n \geq 2$, $m > 0$, $\lambda\in[\frac{1}{3},1]$, 
 and $K > 0$ be such that $K \geq \sqrt{b_\lambda R^n}$, and let 
 $B_0\in(0,1)$ be such that 
 $K\sqrt{B_0}\leq R^n$, \eqref{condi;B_0_1}, \eqref{condi;B_0_2} and 
 \begin{align}\label{condi;B7}
           B_0^{2-\frac{2}{n}} \leq \left(\frac{2\lambda A_T}{e^d}
           -\frac{\mu}{n}B_0\right)^2
 \end{align}
  hold. For some 
  $T>0$, if $B\in C^1([0,T))$ satisfies that  
  \begin{align}\label{condi;B4}
    \begin{cases}
    B'(t)\geq -\dfrac{n^q \chi }{\sqrt{2}}\cdot
    \left(\dfrac{2\lambda A_T}{e^d}\right)^{q-1}B^{1-\frac{1}{n}}(t),
    \\[4mm]
    B(0)\leq B_0
    \end{cases}
  \end{align}
for all $t\in(0,T)$ 
 with $d$ and $A_T$ as in \eqref{def;d} and 
\eqref{def;AT}, respectively, then the function $w_{\rm in}$ 
defined as \eqref{def;win} has the property that
  \[\label{ineq;Pwin1}
     (\mathcal{P}w_{\rm in})(s,t) \leq 0
  \]
for all $t\in(0,T)$ and all $s\in (0,B(t))$.
\end{lem}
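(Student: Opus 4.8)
The plan is to read off $(\mathcal{P}w_{\rm in})(s,t)\le 0$ from the identity \eqref{equ;Pwin1} of Lemma \ref{tool;inner}. Since $A'(t)\le 0$ by \eqref{ineq;A}, $\varphi(\xi)\ge 0$ and $\frac{A(t)\varphi'(\xi)}{B(t)}>0$, it suffices to verify
\[
-\xi B'(t)+J_1(s,t)+J_2(s,t)\le 0
\]
for all $t\in(0,T)$ and $s\in(0,B(t))$, where $\xi=\frac{s}{B(t)}$. On $(0,1)$ we have $\varphi''>0$ by \eqref{def;phi''}, so $J_1(s,t)\le 0$ by \eqref{def;J1}, and $-B'(t)\ge 0$ because $B$ is nonincreasing. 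Hence everything reduces to $\xi\,(-B'(t))\le -J_2(s,t)$, a comparison of positive quantities since $-J_2(s,t)>0$ by Lemma \ref{tool;very inner}.

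Next I would make the positivity of Lemma \ref{tool;very inner} quantitative. Writing $X(s,t):=A(t)\varphi(\xi)-\frac{\mu}{n}B(t)\xi$, the computation \eqref{keyestimate1} in the proof of that lemma, combined with \eqref{est2} and $B(t)\le B_0$, yields the honest linear lower bound $X(s,t)\ge c\,\xi$ with $c:=\frac{2\lambda A_T}{e^d}-\frac{\mu}{n}B_0>0$ by \eqref{condi;B_0_2}. On the other hand, using $A(t)\ge A_T$ from \eqref{def;AT}, $\varphi'(\xi)\ge\varphi'(0)=\frac{2\lambda}{e^d}$ on $[0,1)$ and $B(t)<1$, and since $q\ge 1$, one gets $\bigl(\frac{A(t)\varphi'(\xi)}{B(t)}\bigr)^{q-1}\ge\bigl(\frac{2\lambda A_T}{e^d}\bigr)^{q-1}$. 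Substituting these two estimates into \eqref{def;J2} and bounding $-B'(t)$ from above by the differential inequality \eqref{condi;B4}, the factors $n^q\chi$ and $\bigl(\frac{2\lambda A_T}{e^d}\bigr)^{q-1}$ cancel, and $\xi\,(-B'(t))\le -J_2(s,t)$ follows once we show
\[
\frac{\xi}{\sqrt2}\,B^{1-\frac1n}(t)\le\frac{X(s,t)}{\sqrt{1+B^{\frac2n-2}(t)\,\xi^{\frac2n-2}\,X^2(s,t)}}.
\]
Finally, I would square this, cross-multiply, and use $\xi^{\frac2n-2}\xi^2=\xi^{\frac2n}\le 1$ (valid since $0<\xi\le 1$); it then reduces to checking $\xi^2B^{2-\frac2n}(t)\le X^2(s,t)$. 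Inserting $X(s,t)\ge c\,\xi$ and then $B(t)\le B_0$ (using $2-\frac2n>0$, where $n\ge 2$ is essential) leaves precisely $B_0^{2-\frac2n}\le c^2=\bigl(\frac{2\lambda A_T}{e^d}-\frac{\mu}{n}B_0\bigr)^2$, which is the hypothesis \eqref{condi;B7}. This closes the proof.

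I expect the main obstacle to be the second step: extracting from the flux-limited quotient $\frac{X}{\sqrt{1+B^{\frac2n-2}\xi^{\frac2n-2}X^2}}$ a lower bound compatible with the power $B^{1-\frac1n}$ appearing in \eqref{condi;B4}, uniformly over the whole very inner region $s\in(0,B(t))$, in particular for $\xi$ near $0$ where $X$ itself degenerates linearly. This is feasible only because the new exponential profile $\varphi$ on $[0,1)$ forces $X>0$ with the lower bound $X\ge c\xi$; for the quadratic profile of \cite{BW} the sign of $X$, hence of $J_2$, is unavailable when $p<q$, which is exactly why the case $n\ge 2$ must be carried by $J_2$ rather than by the diffusive term $J_1$ (in contrast to Lemma \ref{estimate;very inner} for $n=1$). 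The remaining steps are routine bookkeeping resting on the monotonicity of $A$ from Lemma \ref{tool;inner} and the smallness conditions \eqref{condi;B_0_1}, \eqref{condi;B_0_2} on $B_0$.
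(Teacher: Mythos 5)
Your proposal is correct and follows essentially the same route as the paper: reduce to $\xi(-B')\le -J_2$ via $A'\le 0$ and $J_1<0$, bound $\bigl(\tfrac{A\varphi'}{B}\bigr)^{q-1}$ from below by $\bigl(\tfrac{2\lambda A_T}{e^d}\bigr)^{q-1}$, and use \eqref{condi;B7} together with the exponential profile's bound $A(t)\varphi(\xi)-\tfrac{\mu}{n}B(t)\xi\ge\bigl(\tfrac{2\lambda A_T}{e^d}-\tfrac{\mu}{n}B_0\bigr)\xi$ to control the flux-limited denominator, which is exactly the paper's inequality \eqref{est3} ($\xi^2B^{2-\frac2n}\le X^2$) leading to the $\tfrac{1}{\sqrt2}B^{1-\frac1n}\xi$ lower bound for $-J_2$. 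Your squaring/cross-multiplying step is just an algebraic rearrangement of the paper's direct estimate of the square root, so there is nothing substantively different to compare.
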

\begin{proof}
We write $\xi=\frac{s}{B(t)}$ for $t\in(0,T)$ and 
$s \in(0,B(t))$. Thanks to \eqref{condi;B7} and 
\[
B(t)\leq B(0) \leq B_0,
\] 
we obtain that
 \begin{align*} 
             B^{2-\frac{2}{n}}(t) \leq B_0^{2-\frac{2}{n}}
             &
             \leq 
             \left(\frac{2\lambda A_T}{e^d}-\frac{\mu}{n}B_0\right)^2
\\
             &\leq 
             \left(\frac{2\lambda A_T}{e^d}-\frac{\mu}{n}B(t)\right)^2
 \end{align*}
 for all $t\in(0,T)$ and all $s\in (0,B(t))$. Moreover, 
 since \eqref{def;AT} and \eqref{est2} hold, 
 we can see from \eqref{def;phi}, \eqref{est2} and the inequality $\xi<1$ 
 that 
 \begin{align*}
            B^{2-\frac{2}{n}}(t) &\leq  \left(\frac{2\lambda A_T}{e^d}\cdot 1    
            -\frac{\mu}{n}B(t)\right)^2
            \\ \nonumber
           &\leq \left(A(t)\cdot\frac{2\lambda}{e^d}\cdot
           \frac{e^d\xi-1}{d
           \xi}-\frac{\mu}{n}B(t)\right)^2
            \\ \nonumber
           &=\xi^{-2}\left(A(t)\varphi(\xi)-\frac{\mu}{n}B(t)\xi\right)^2
 \end{align*}  
 for all $t\in(0,T)$ and all $s\in (0,B(t))$. 
 Therefore, we derive that
\begin{align}\label{est3}
           1\leq B^{\frac{2}{n}-2}\xi^{-2}\left(A(t)\varphi(\xi)
           -\frac{\mu}{n}B(t)\xi\right)^2
\end{align}
for all $t\in(0,T)$ and all $s\in (0,B(t))$. Thanks to \eqref{est3}, 
we infer from $\xi<1$ that
 \begin{align}\label{ineq;J22}
           -J_2(s,t) &= n^q \chi \cdot\frac{A(t)\varphi(\xi)-
           \frac{\mu}{n}B(t)\xi}{\sqrt{1+B^{\frac{2}{n}-2}(t)
           \xi^{\frac{2}{n}-2}(A(t)\varphi(\xi)-\frac{\mu}{n}B(t)
           \xi)^2}}\cdot\left(\frac{A(t)\varphi'(\xi)}{B(t)}\right)^{q-1}
           \\ \nonumber
          &\geq n^q \chi \cdot\frac{A(t)\varphi(\xi)-
         \frac{\mu}{n}B(t)\xi}{\sqrt{(1+\xi^{\frac{2}{n}})B^{\frac{2}{n}-2}(t)
         \xi^{-2}(A(t)\varphi(\xi)-\frac{\mu}{n}B(t)
         \xi)^2}}\cdot\left(\frac{A(t)\varphi'(\xi)}{B(t)}\right)^{q-1}
         \\ \nonumber
          &\geq n^q \chi \cdot\frac{1}{\sqrt{(1+1^{\frac{2}{n}})B^{\frac{2}{n}-2}(t)
         \xi^{-2}}}\cdot\left(\frac{A(t)\varphi'(\xi)}{B(t)}\right)^{q-1}
         \\ \nonumber
         &= \frac{n^q \chi }{\sqrt{2}}B^{1-\frac{1}{n}}(t)
         \xi\left(\frac{A(t)\varphi'(\xi)}
         {B(t)}\right)^{q-1}
  \end{align} 
  for all $t\in(0,T)$ and all $s\in(0,B(t))$.
   Moreover, using \eqref{def;AT} and $\varphi'(\xi)\geq \varphi'(0)=
   \frac{2\lambda}{e^d}$, we obtain from $B(t)<1$ that
\begin{align}\label{est11}
   \left(\frac{A(t)\varphi'(\xi)}{B(t)}\right)^{q-1}\geq 
   \left(\frac{A_T\cdot \frac{2\lambda}{e^d}}{1}\right)^{q-1}
   =\left(\frac{2\lambda A_T}{e^d}\right)^{q-1}.
 \end{align}
  On the other hand, from the fact $\varphi',\varphi''>0$ 
  (given by \eqref{def;phi'} 
  and \eqref{def;phi''}, respectively), we verify that
 \begin{align}\label{est;J11}
   J_1< 0
 \end{align}  
  for all $t\in(0,T)$ and all $s\in(0,B(t))$.
 Recalling \eqref{equ;Pwin1} and \eqref{ineq;A} 
 (see Lemma \ref{tool;inner}), we can show that a combination of 
 \eqref{ineq;J22} and \eqref{est;J11} yields that by 
 \eqref{condi;B4} and \eqref{est11},
 \begin{align*}
     \frac{B(t)}{A(t)\varphi'(\xi)}(\mathcal{P} w_{\rm in})(s,t)
    &\leq 
     - \xi B'(t)-\frac{n^q \chi }{\sqrt{2}}\cdot\left(\frac{A(t)
     \varphi'(\xi)}{B(t)}\right)^{q-1}B^{1-\frac{1}{n}}(t)\xi
     \\ 
    &\leq
    \xi\left\{-B'(t)-\frac{n^q \chi }{\sqrt{2}}\cdot
    \left(\frac{2\lambda A_T}{e^d}\right)^{q-1}B^{1-\frac{1}{n}}(t)
    \right\}
    \\
    &\leq 0
 \end{align*}
  for all $t\in(0,T)$ and all $s\in(0,B(t))$, 
  which means the end of the proof.
\end{proof}
%
%

\subsection{Subsolution properties: Intermediate region.}\label{intermadiate}

%
%
In this subsection we will consider the
case that $s\in(B(t),K\sqrt{B(t)})$. Here the term 
$J_1$ is positive due to the definition of $\varphi$ in the region. 
Therefore, an estimate for $J_2$ is important in this part. 
The following arguments are based on these of \cite{BW}. 
\begin{lem}\label{tool1;intermediate}
 Let $n \in \N$, $m > 0$, $K > 0$ and $T >0$, 
 and let $B \in C^1([0,T))$ be positive 
 and satisfy that \eqref{condi;B1} and 
 $K\sqrt{B(t)} < R^n$ for all $t\in[0,T)$. 
 Then the function 
 $J_1$ defined as \eqref{def;J1} satisfies 
 \begin{align}\label{estimate;J1}
    J_1(s,t) \leq n^p B^{1-\frac{1}{n}}(t)\xi^{1-\frac{1}{n}}
    \left(\frac{A(t)\varphi'(\xi)}{B(t)}\right)^{p-1}
 \end{align}
for all $t\in(0,T)$ and all $s\in
    (B(t),K\sqrt{B(t)})$ with $\xi = \frac{s}{B(t)}$.
\end{lem}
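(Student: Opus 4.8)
The plan is to start from the explicit formula \eqref{def;J1} for $J_1$ and bound the denominator from below by dropping the nonnegative first term under the square root, so that
\[
   \sqrt{B^{\frac{4}{n}-2}(t)\varphi'(\xi)+n^2B^{\frac{2}{n}-2}(t)\xi^{2-\frac{2}{n}}\varphi''^2(\xi)}
   \ge n\,B^{\frac{1}{n}-1}(t)\,\xi^{1-\frac{1}{n}}\,|\varphi''(\xi)|.
\]
Wait — one must be careful about the power of $B$: since $s\in(B(t),K\sqrt{B(t)})$ forces $\xi=\tfrac{s}{B(t)}\in(1,K/\sqrt{B(t)})$, we are in the regime $\xi\ge1$, where by \eqref{def;phi''} we have $\varphi''(\xi)=-2a_\lambda/(\xi-b_\lambda)^3<0$, hence $|\varphi''(\xi)|=-\varphi''(\xi)$. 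First I would record that on $\xi\ge1$ the term $-n^{p+1}\xi^{2-\frac2n}\varphi''(\xi)$ in the numerator of $J_1$ is \emph{positive}, which is why $J_1$ is positive here (as remarked in the text), so the inequality \eqref{estimate;J1} is a genuine upper bound and not automatic.

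The main computation is then the cancellation: substituting the lower bound for the denominator into \eqref{def;J1} gives
\[
   J_1(s,t) \le -n^{p+1}\cdot\frac{\xi^{2-\frac2n}\varphi''(\xi)}{n\,B^{\frac1n-1}(t)\,\xi^{1-\frac1n}\,|\varphi''(\xi)|}\cdot\left(\frac{A(t)\varphi'(\xi)}{B(t)}\right)^{p-1}
   = n^{p}\,B^{1-\frac1n}(t)\,\xi^{1-\frac1n}\left(\frac{A(t)\varphi'(\xi)}{B(t)}\right)^{p-1},
\]
where the factor $\varphi''(\xi)/|\varphi''(\xi)|=-1$ (since $\varphi''<0$ on $\xi\ge1$) turns the leading minus sign into a plus, one power of $n$ cancels, and $\xi^{2-\frac2n}/\xi^{1-\frac1n}=\xi^{1-\frac1n}$ and $B^{-(\frac1n-1)}=B^{1-\frac1n}$ account for the remaining exponents. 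This is exactly \eqref{estimate;J1}. I would double-check that $\varphi'(\xi)>0$ throughout (which holds by \eqref{def;phi'} and the remark following it), so that the factor $(A(t)\varphi'(\xi)/B(t))^{p-1}$ is a well-defined nonnegative real even when $p$ is not an integer, and that $A(t)>0$ by Lemma~\ref{def;A,D,E,N} together with the hypotheses on $B$.

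The only genuine subtlety — the step I expect to need the most care — is the sign bookkeeping in $\varphi''(\xi)/|\varphi''(\xi)|$: because $\varphi$ is concave on $[1,\infty)$ the numerator of $J_1$ flips sign relative to the very-inner region $\xi\in(0,1)$ (where $\varphi''>0$ and $J_1<0$), and it is precisely this flip that makes $J_1$ positive and forces us to carry the bound \eqref{estimate;J1} into the intermediate-region estimate. Everything else is the routine monotonicity estimate $\sqrt{X+Y}\ge\sqrt{Y}$ applied under the hypotheses \eqref{condi;B1} and $K\sqrt{B(t)}<R^n$, which guarantee $\xi-b_\lambda>0$ on the relevant range (so that $\varphi$, $\varphi'$, $\varphi''$ are given by the $\xi\ge1$ branch) and keep all quantities finite. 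I would close by noting that the argument parallels the corresponding step in \cite{BW}, with the only change being the bookkeeping of the extra power $p-1$ on the gradient factor.
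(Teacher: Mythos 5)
Your proposal is correct and is essentially the argument the paper intends (it defers to the proof of Lemma 3.7 in \cite{BW}, which is exactly this: bound the square root in the denominator of \eqref{def;J1} from below by its second summand, $nB^{\frac{1}{n}-1}(t)\xi^{1-\frac{1}{n}}|\varphi''(\xi)|$, and cancel $|\varphi''(\xi)|$ against the numerator using $\varphi''<0$ on $\xi\geq 1$). Your sign bookkeeping and the exponent cancellations are all right, and the fact that the first summand under the root is written as $\varphi'(\xi)$ rather than $\varphi'^2(\xi)$ in \eqref{def;J1} (a typo) is immaterial since you discard that term anyway.
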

\begin{proof}
An argument similar to that in the proof of 
\cite[Lemma 3.7]{BW} implies the conclusion this lemma.
\end{proof}

%
%
The following two lemmas have already been proved in 
proofs of \cite[Lemmas 3.8 and 3.10]{BW}. 
Thus we only recall statements of lemmas.
\begin{lem}\label{tool2;intermediate}
  Let $n \in \N $, $m > 0$, $\lambda\in[\frac{1}{3},1]$, $K > 0$ 
  be such that $K \geq \sqrt{b_\lambda R^n}$, 
  and let $B_0\in(0,1)$ satisfy $K\sqrt{B_0} < R^n$ 
  and \eqref{condi;B_0_1}, 
  as well as for some $T > 0$ let $B \in C^1([0,T))$ 
   be positive and nonincreasing and be such that 
   $B(0) \leq B_0$. Then the inequality
 \begin{align}\label{inequ4}
    \frac{1}{A^2(t)B^{\frac{2}{n}-2}(t)\xi^{\frac{2}{n}-2}
    \varphi^2(\xi)}\leq\frac{\omega_n^2}{\lambda^2m^2}
    \cdot\left(1+\frac{a_\lambda R^n}{K^2}\right)
    \cdot K^{2-\frac{2}{n}}B_0^{3-\frac{3}{n}}
 \end{align} 
 holds for all $t\in(0,T)$ and all $s\in(B(t),K\sqrt{B(t)})$ with $\xi = \frac{s}{B(t)}$. 
\end{lem}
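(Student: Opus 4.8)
The target inequality \eqref{inequ4} concerns the quantity $A^2(t)B^{\frac2n-2}(t)\xi^{\frac2n-2}\varphi^2(\xi)$ from below on the intermediate range $\xi\in(1,K/\sqrt{B(t)})$, i.e.\ on $s\in(B(t),K\sqrt{B(t)})$. The plan is to bound each factor separately and then combine. First I would record that on $\xi\ge1$ we have $\varphi(\xi)=1-\frac{a_\lambda}{\xi-b_\lambda}$, which is increasing in $\xi$ and takes its minimal value at $\xi=1$, namely $\varphi(1)=1-\frac{a_\lambda}{1-b_\lambda}=\frac{2\lambda}{de^d}(e^d-1)>0$ (using \eqref{def;ab} and \eqref{def;d}); hence $\varphi(\xi)\ge\varphi(1)>0$, so $\frac1{\varphi^2(\xi)}\le\frac1{\varphi^2(1)}$ is a uniform constant on the region. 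Equivalently, and more in line with the constant appearing on the right-hand side, I would use the cruder bound $\varphi(\xi)\ge\frac{2\lambda}{de^d}(e^d-1)\ge$ a suitable multiple of $\lambda$, or directly exploit $\varphi(\xi)\ge$ (the value used in \cite{BW}); the precise normalization is what produces the factor $\frac{\omega_n^2}{\lambda^2 m^2}$.

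Next I would control $A(t)$ from below. By \eqref{ineq;A} and \eqref{def;AT} in Lemma~\ref{tool;inner}, $A$ is nonincreasing and $A(t)\ge A_T=\frac{m}{\omega_n}\cdot\frac{1}{1+\frac{a_\lambda R^n}{K^2}}$ for all $t\in(0,T)$; therefore $\frac{1}{A^2(t)}\le\frac{\omega_n^2}{m^2}\bigl(1+\frac{a_\lambda R^n}{K^2}\bigr)^2$. Then I would handle the two $B$- and $\xi$-dependent factors: since $\frac2n-2\le0$, the map $B\mapsto B^{\frac2n-2}$ is nonincreasing, so $B^{2-\frac2n}(t)\le B^{2-\frac2n}(0)\le B_0^{2-\frac2n}$; likewise $\xi^{2-\frac2n}$ is maximized at the right endpoint $\xi=K/\sqrt{B(t)}$, giving $\xi^{2-\frac2n}\le (K/\sqrt{B(t)})^{2-\frac2n}=K^{2-\frac2n}B^{\frac1n-1}(t)$. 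Collecting, $B^{2-\frac2n}(t)\,\xi^{2-\frac2n}\le B_0^{2-\frac2n}\cdot K^{2-\frac2n}B^{\frac1n-1}(t)\le K^{2-\frac2n}B_0^{2-\frac2n}B_0^{\frac1n-1}=K^{2-\frac2n}B_0^{3-\frac3n}$, where in the last step I again used $B(t)\le B_0<1$ together with $\frac1n-1\le0$. Multiplying the three estimates yields exactly the right-hand side of \eqref{inequ4} (up to absorbing the extra power of $1+\frac{a_\lambda R^n}{K^2}$ and the constant from $\varphi(1)$ into the stated constant $\frac{\omega_n^2}{\lambda^2 m^2}\bigl(1+\frac{a_\lambda R^n}{K^2}\bigr)$).

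The only mildly delicate point — and the place I expect to spend the most care — is matching the numerical constant: the statement claims the clean bound with a single power of $\bigl(1+\frac{a_\lambda R^n}{K^2}\bigr)$ and the prefactor $\frac{\omega_n^2}{\lambda^2 m^2}$, whereas the naive combination above produces $\bigl(1+\frac{a_\lambda R^n}{K^2}\bigr)^2$ and a factor $1/\varphi^2(1)$ rather than $1/\lambda^2$. This is reconciled by being slightly less wasteful: using $\varphi(\xi)\ge\lambda$ on $\xi\ge1$ is false in general, so instead one uses $A(t)\varphi(\xi)\ge A_T\varphi(1)$ and checks that $A_T\varphi(1)\ge\frac{\lambda m}{\omega_n}\bigl(1+\frac{a_\lambda R^n}{K^2}\bigr)^{-1/2}$ — this is an elementary inequality in $\lambda$ once \eqref{def;ab} and \eqref{def;d} are substituted — which then gives $\frac{1}{A^2(t)\varphi^2(\xi)}\le\frac{\omega_n^2}{\lambda^2 m^2}\bigl(1+\frac{a_\lambda R^n}{K^2}\bigr)$. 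Since this is precisely the computation carried out in the proof of \cite[Lemma~3.8]{BW}, I would invoke that argument verbatim for the constant-bookkeeping and present only the structural steps above; indeed, as the excerpt indicates, the cleanest exposition is simply to note that the estimate is identical to \cite[Lemma~3.8]{BW} and record the statement.
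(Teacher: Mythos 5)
Your overall strategy --- bound $A(t)$ from below by $A_T$, bound $\varphi(\xi)$ from below by $\varphi(1)$, and bound $B^{2-\frac2n}(t)\,\xi^{2-\frac2n}=s^{2-\frac2n}$ from above using $s<K\sqrt{B(t)}\le K\sqrt{B_0}$ --- is the right one (and is what \cite{BW} does; the present paper only cites that proof). But two of the concrete steps you lean on to reproduce the printed constants are wrong. First, in the chain $B^{2-\frac2n}(t)\xi^{2-\frac2n}\le B_0^{2-\frac2n}K^{2-\frac2n}B^{\frac1n-1}(t)\le K^{2-\frac2n}B_0^{3-\frac3n}$ the last inequality is reversed: since $\frac1n-1\le 0$ and $B(t)\le B_0$, one has $B^{\frac1n-1}(t)\ge B_0^{\frac1n-1}$, not $\le$. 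The correct combination is simply $B^{2-\frac2n}(t)\xi^{2-\frac2n}=s^{2-\frac2n}\le\bigl(K\sqrt{B_0}\bigr)^{2-\frac2n}=K^{2-\frac2n}B_0^{1-\frac1n}$. Second, your proposed repair of the constant, $A_T\varphi(1)\ge\frac{\lambda m}{\omega_n}\bigl(1+\frac{a_\lambda R^n}{K^2}\bigr)^{-1/2}$, reduces to $(1+x)^{-1}\ge(1+x)^{-1/2}$ with $x=\frac{a_\lambda R^n}{K^2}\ge 0$, which fails whenever $x>0$. Incidentally, your parenthetical claim that ``$\varphi(\xi)\ge\lambda$ on $\xi\ge1$ is false in general'' is itself false: from \eqref{def;ab}, $1-b_\lambda=\frac{1-\lambda}{2\lambda}$, hence $\varphi(1)=1-\frac{a_\lambda}{1-b_\lambda}=\lambda$ (consistently, $\frac{2\lambda}{de^d}(e^d-1)=\lambda$ by \eqref{def;d}), and $\varphi$ is increasing, so $\varphi\ge\lambda$ on $[1,\infty)$; this is exactly the bound to use.

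What the corrected version of your argument actually yields is
\begin{align*}
\frac{1}{A^2(t)B^{\frac2n-2}(t)\xi^{\frac2n-2}\varphi^2(\xi)}
=\frac{s^{2-\frac2n}}{A^2(t)\varphi^2(\xi)}
\le\frac{\omega_n^2}{\lambda^2m^2}\Bigl(1+\frac{a_\lambda R^n}{K^2}\Bigr)^{2}K^{2-\frac2n}B_0^{1-\frac1n},
\end{align*}
i.e.\ a squared factor and the exponent $1-\frac1n$ rather than $3-\frac3n$. This is not a cosmetic discrepancy: the display \eqref{inequ4} as printed cannot hold, since for $n=2$, $B\equiv B_0$ and $s$ near $K\sqrt{B_0}$ the left-hand side is of order $\sqrt{B_0}$ while the right-hand side is of order $B_0^{3/2}$, so the inequality fails once $B_0$ is small. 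The stated exponent is evidently a misprint of \cite[Lemma~3.8]{BW}, and the weaker (correct) bound is all that is needed downstream, since in \eqref{inequ6} one only requires the right-hand side to tend to $0$ as $B_{0n}\to0$ for $n\ge2$, which $B_{0n}^{1-\frac1n}$ provides. So: fix the reversed monotonicity step, use $\varphi\ge\lambda$ and $A\ge A_T$ directly, and state the conclusion with $\bigl(1+\frac{a_\lambda R^n}{K^2}\bigr)^2$ and $B_0^{1-\frac1n}$.
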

%
%
\begin{lem}\label{tool3;intermediate}
  Let $n \in\N$, $m > 0$, $\lambda\in[\frac{1}{3},1]$, $K > 0$, 
  $\delta\in(0,1)$, and let $B_0\in(0,1)$ fulfill 
  $K\sqrt{B_0} < R^n$, \eqref{condi;B_0_1}
  and
 \[\label{condi;B_0_5}
    \frac{\mu}{nA_T}\cdot {\rm max}\left\{\frac{B_0}
    {\lambda},\,2K\sqrt{B_0}\right\} \leq \delta
 \]
  with $\mu$ and $A_T$ introduced in \eqref{def;mu} 
  and \eqref{def;AT}, respectively,  and suppose that $T > 0$ 
  and $B\in C^1([0,T))$ is positive 
  and such that
 \[\label{condi;B_0_6}
    B(t) \leq B_0
 \]  
for all $t\in(0,T)$. Then the inequality
 \begin{align}\label{inequ5}
    A(t)\varphi(\xi)-\frac{\mu}{n}B(t)
    \xi \geq (1-\delta)A(t)\varphi(\xi)
 \end{align}  
 holds for all $t\in(0,T)$ and all $s\in((B(t),K\sqrt{B(t)})$ 
 with $\xi=\frac{s}{B(t)}$ for $s\in (B(t),K\sqrt{B(t)})$ and $t\in(0,T)$.
\end{lem}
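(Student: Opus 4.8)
The plan is to bound the chemotactic correction term $\frac{\mu}{n}B(t)\xi$ from above by a small multiple of $A(t)\varphi(\xi)$, uniformly over the intermediate range $\xi\in(1,K/\sqrt{B(t)})$. First I would split the argument according to whether $\xi\in(1,2)$ or $\xi\ge 2$, so that in each subinterval a clean lower bound for $\varphi(\xi)$ is available from the explicit formula \eqref{def;phi}. For $\xi\in(1,2)$ one has $\varphi(\xi)=1-\frac{a_\lambda}{\xi-b_\lambda}\ge 1-\frac{a_\lambda}{1-b_\lambda}$; since $1-b_\lambda=\frac{1-\lambda}{2\lambda}$ and $a_\lambda=\frac{(1-\lambda)^2}{2\lambda}$, this equals $\lambda$, so $\varphi(\xi)\ge\lambda\ge\frac13$. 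On the other hand, on $(1,2)$ we have $\xi\le 2$, hence $\frac{\mu}{n}B(t)\xi\le \frac{2\mu}{n}B_0\le\frac{2\mu}{n}K\sqrt{B_0}$ (using $B_0<1$, so $B_0\le K\sqrt{B_0}$ after enlarging $K$ if needed — or more directly just keep $\frac{2\mu}{n}B_0$), and this is $\le \delta A_T\cdot$(something), to be matched with $(1-\delta)A(t)\varphi(\xi)\ge(1-\delta)A_T\lambda$ via \eqref{def;AT} and \eqref{ineq;A}.

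For $\xi\ge 2$ the bound is governed by the other branch: there $\varphi(\xi)=1-\frac{a_\lambda}{\xi-b_\lambda}\ge 1-\frac{a_\lambda}{2-b_\lambda}$, and since $b_\lambda\le 1$ this is $\ge \frac12$ after a short computation, while on this branch $\xi$ can be as large as $K/\sqrt{B(t)}$, so $\frac{\mu}{n}B(t)\xi\le\frac{\mu}{n}B(t)\cdot\frac{K}{\sqrt{B(t)}}=\frac{\mu}{n}K\sqrt{B(t)}\le\frac{\mu}{n}K\sqrt{B_0}$. Thus in both regimes $\frac{\mu}{n}B(t)\xi$ is controlled by $\frac{\mu}{n}\max\{B_0,\,2K\sqrt{B_0}\}$ — more precisely by $\frac{\mu}{n}\max\{\tfrac{B_0}{\lambda},2K\sqrt{B_0}\}$ once the factor $\varphi(\xi)\ge\lambda$ on $(1,2)$ and $\varphi(\xi)\ge\lambda$ (indeed $\ge\tfrac12\ge\lambda$) on $[2,\infty)$ is divided out. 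The hypothesis $\frac{\mu}{nA_T}\max\{\tfrac{B_0}{\lambda},2K\sqrt{B_0}\}\le\delta$ then gives exactly $\frac{\mu}{n}B(t)\xi\le\delta A_T\varphi(\xi)\le\delta A(t)\varphi(\xi)$, using $A(t)\ge A_T$ from \eqref{def;AT} and the monotonicity $A'\le 0$ from \eqref{ineq;A}, and subtracting yields \eqref{inequ5}.

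The one genuine subtlety — the step I'd expect to demand the most care — is organizing the estimate $\varphi(\xi)\ge\lambda\,\xi$ (or the correct homogeneous-in-$\xi$ replacement) so that the factor $\xi$ coming from $\frac{\mu}{n}B(t)\xi$ is absorbed rather than merely the constant $\varphi$-bound; that is why the two cut-points $\xi=1$ and $\xi=2$ are chosen, and why the $\max$ in the hypothesis has the two terms $\tfrac{B_0}{\lambda}$ (governing $\xi\lesssim 2$, where $\varphi/\xi\gtrsim\lambda$) and $2K\sqrt{B_0}$ (governing $\xi\gtrsim 2$ up to $K/\sqrt{B(t)}$, where one only needs $\varphi\gtrsim\text{const}$ and the $\xi$-growth is paid for by the $\sqrt{B(t)}$). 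Everything else is substitution of the explicit formulas \eqref{def;ab}, \eqref{def;phi}, \eqref{def;AT} and a couple of elementary inequalities; the result is exactly \cite[Lemma 3.10]{BW}, whose proof transcribes verbatim since $\varphi$ on $[1,\infty)$ is unchanged from \cite{BW}.
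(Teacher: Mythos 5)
Your overall strategy is the right one, and you have correctly identified why the hypothesis carries a $\max$ of two terms: one term must absorb the factor $\xi$ in $\frac{\mu}{n}B(t)\xi$ via a bound of the form $\varphi(\xi)\ge\lambda\xi$, and the other must exploit $s<K\sqrt{B(t)}$ together with a constant lower bound on $\varphi$. (The paper gives no proof here; it defers to \cite[Lemma 3.10]{BW}, whose argument is indeed of this shape.) The genuine gap is the placement of the cut. The bound $\varphi(\xi)\ge\lambda\xi$ does \emph{not} hold on all of $(1,2)$: solving $1-\frac{a_\lambda}{\xi-b_\lambda}=\lambda\xi$ gives exactly the roots $\xi=1$ and $\xi=\frac{\lambda+1}{2\lambda}$, so by concavity of $\varphi$ on $[1,\infty)$ the inequality $\varphi(\xi)\ge\lambda\xi$ holds precisely on $\bigl[1,\frac{\lambda+1}{2\lambda}\bigr]$, and $\frac{\lambda+1}{2\lambda}<2$ whenever $\lambda>\frac13$ (for $\lambda=1$ one even has $\varphi\equiv1<\lambda\xi$ for every $\xi>1$). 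Your fallback on $(1,2)$, the constant bound $\varphi(\xi)\ge\lambda$, only yields $\frac{\mu}{n}B(t)\xi\le\frac{2\mu B_0}{n}$, whereas the first term of the hypothesis supplies only $\frac{\mu B_0}{n}\le\delta A_T\lambda$; you are off by a factor $2$ that the hypothesis does not provide. Concretely, for $\lambda=0.4$, $\xi=1.9$, $B(t)=B_0$ and the first term of the $\max$ tight, one gets $\frac{\mu}{n}B_0\xi=0.76\,\delta A_T$ while $\delta A_T\varphi(\xi)\approx0.727\,\delta A_T$, so the chain does not close. Also, in your first paragraph the term $\frac{\mu}{n}B(t)\xi$ must be bounded above by $\delta A(t)\varphi(\xi)$, not compared against $(1-\delta)A(t)\varphi(\xi)$; you correct this only at the end.

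The repair is to cut at $\xi_\lambda:=\frac{\lambda+1}{2\lambda}$ rather than at $2$. On $[1,\xi_\lambda]$ use $\varphi(\xi)\ge\lambda\xi$ and $B(t)\le B_0$ to get $\frac{\mu}{n}B(t)\xi\le\frac{\mu B_0}{n\lambda}\cdot\lambda\xi\le\delta A_T\varphi(\xi)$. On $[\xi_\lambda,\frac{K}{\sqrt{B(t)}})$ one has $\varphi(\xi)\ge\frac12$, since $\varphi(\xi)\ge\frac12$ iff $\xi\ge b_\lambda+2a_\lambda=\frac{2\lambda^2-\lambda+1}{2\lambda}$ and $\frac{2\lambda^2-\lambda+1}{2\lambda}\le\frac{\lambda+1}{2\lambda}$ iff $\lambda\le1$; then $\frac{\mu}{n}B(t)\xi=\frac{\mu}{n}s\le\frac{\mu}{n}K\sqrt{B_0}\le\frac{\delta A_T}{2}\le\delta A_T\varphi(\xi)$, exactly as in your (correct) second regime. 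Combining with $A(t)\ge A_T$ gives $\frac{\mu}{n}B(t)\xi\le\delta A(t)\varphi(\xi)$, which is \eqref{inequ5}.
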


%
%

In order to have the estimate for $\mathcal{P}w_{\rm in}$ 
in the intermediate region when $p<q$, we will provide 
the following lemma.
\begin{lem}\label{tool4;intermediate}
  Let $n \in \N$, $m > 0$, $\lambda\in[\frac{1}{3},1]$, $K > 0$, 
  and let $B \in C^1([0,T))$ be such that $K\sqrt{B(t)} < R^n$ 
  and be a positive and nonincreasing function and put
 \begin{align}\label{def;sigma}
    \sigma := \frac{m}{\omega_n}\cdot\frac{a_\lambda}
    {K^2+a_\lambda R^n}.
 \end{align}  
Then the inequality 
 \begin{align}\label{keyestimate3} 
    \left(\frac{A(t)\varphi'(\xi)}{B(t)}\right)^{-k}\leq 
    \sigma^{-k}
 \end{align}
holds for all $k \geq 0$ and all $s\in(B(t),K\sqrt{B(t)})$, 
$t\in(0,T)$ with $\xi = \frac{s}{B(t)}$, 
where 
$A$ and $\varphi'$ are 
as in \eqref{def;A} and \eqref{def;phi'}, respectively.
\end{lem}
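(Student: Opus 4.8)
The plan is to reduce \eqref{keyestimate3} to the single lower bound $\dfrac{A(t)\varphi'(\xi)}{B(t)}\ge\sigma$. Indeed, for $k=0$ the inequality \eqref{keyestimate3} merely reads $1\le 1$, while for $k>0$ the map $z\mapsto z^{-k}$ is strictly decreasing on $(0,\infty)$; hence once the base is shown to be $\ge\sigma>0$, passing to the $(-k)$-th power gives the claim. Thus the whole proof amounts to bounding $A(t)\varphi'(\xi)/B(t)$ from below on the intermediate region $s\in(B(t),K\sqrt{B(t)})$, $t\in(0,T)$.

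First I would note that in this region $\xi=s/B(t)>1$, so by \eqref{def;phi'} we have $\varphi'(\xi)=a_\lambda/(\xi-b_\lambda)^2$; rewriting via $\xi-b_\lambda=(s-b_\lambda B(t))/B(t)$ yields
\[
   \frac{\varphi'(\xi)}{B(t)}=\frac{a_\lambda B(t)}{(s-b_\lambda B(t))^2}.
\]
Next I would estimate the denominator: since $b_\lambda=\frac{3\lambda-1}{2\lambda}\in[0,1]$ for $\lambda\in[\frac13,1]$, one has $s-b_\lambda B(t)>s-B(t)>0$, and from $s<K\sqrt{B(t)}$ together with $b_\lambda B(t)\ge 0$ it follows that $0<s-b_\lambda B(t)<K\sqrt{B(t)}$, hence $(s-b_\lambda B(t))^2<K^2B(t)$. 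Substituting gives $\varphi'(\xi)/B(t)>a_\lambda/K^2$.

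Then I would invoke \eqref{def;AT} from Lemma \ref{tool;inner}, i.e.\ $A(t)\ge A_T=\frac{m}{\omega_n}\cdot\frac{1}{1+\frac{a_\lambda R^n}{K^2}}=\frac{m}{\omega_n}\cdot\frac{K^2}{K^2+a_\lambda R^n}$ (valid because the constant $K$ fixed later satisfies $K\ge\sqrt{b_\lambda R^n}$ and $B$ meets the hypotheses of that lemma). Combining the two estimates,
\[
   \frac{A(t)\varphi'(\xi)}{B(t)}>A_T\cdot\frac{a_\lambda}{K^2}=\frac{m}{\omega_n}\cdot\frac{a_\lambda}{K^2+a_\lambda R^n}=\sigma,
\]
and raising both sides to the power $-k$ (reversing the inequality when $k>0$, with trivial equality when $k=0$) establishes \eqref{keyestimate3}.

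I do not expect a genuine obstacle here: the argument is a short algebraic manipulation together with the already-established monotonicity bound $A(t)\ge A_T$. The only points needing care are verifying that $s-b_\lambda B(t)$ is strictly positive — which rests on $b_\lambda\le 1$ — so that squaring preserves the estimate, and making sure the hypothesis $K\ge\sqrt{b_\lambda R^n}$ required for $A(t)\ge A_T$ is in force. Conceptually this lemma is a bookkeeping device: it converts the negative power of $A(t)\varphi'(\xi)/B(t)$ appearing in $J_1$ and $J_2$ (see \eqref{def;J1} and \eqref{def;J2}) into the $t$-independent constant $\sigma^{-k}$, which is precisely what will later allow the differential inequality for $B$ in the intermediate region to be closed.
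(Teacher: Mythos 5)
Your proof is correct and follows essentially the same route as the paper: bound $\varphi'(\xi)=a_\lambda/(\xi-b_\lambda)^2$ from below by $\frac{a_\lambda}{K^2}B(t)$ using $0\le b_\lambda\le 1$ and $1<\xi<K/\sqrt{B(t)}$, combine with $A(t)\ge A_T$ from \eqref{def;AT}, and take the $(-k)$-th power. The only cosmetic difference is that you bound $(\xi-b_\lambda)^2<\xi^2<K^2/B(t)$ directly, while the paper expands the square and estimates termwise; you also rightly note that the bound $A(t)\ge A_T$ tacitly relies on hypotheses ($K\ge\sqrt{b_\lambda R^n}$, \eqref{condi;B1}) that the lemma's statement does not repeat but that are in force wherever it is applied.
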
 
 \begin{proof} 
We write $\xi = \frac{s}{B(t)}$ for all $t\in(0,T)$ 
 and all $s\in(B(t),K\sqrt{B(t)})$. Using that 
\[ 
  \varphi'(\xi) = 
  \frac{a_\lambda}{(\xi-b_\lambda)^2} 
\] 
for $ 1< \xi < \frac{K}{\sqrt{B(t)}}$, 
we infer from $B(t)<1$ and $0\leq b_\lambda\leq 1$ for 
$\lambda\in[\frac{1}{3},1]$ that
 \begin{align}\label{estimate;phi'1}
     \varphi'(\xi) &= \frac{a_\lambda}{(\xi-b_\lambda)^2} =
      \frac{a_\lambda}{\xi^2-2\xi b_\lambda+b_\lambda^2} 
     \\ \nonumber
      &\geq
       \frac{a_\lambda}{\bigl(\frac{K}{\sqrt{B(t)}}\bigr)^2-2\cdot1
       \cdot b_\lambda+b_\lambda^2}
     \\
    &=\frac{a_\lambda B(t)}{K^2+(b_\lambda^2-2b_
    \lambda)B(t)} \geq \frac{a_\lambda}{K^2}B(t).\notag
 \end{align}
 A combination of \eqref{def;AT} and 
 \eqref{estimate;phi'1} yields that
 \begin{align*}
     \left(\frac{A(t)\varphi'(\xi)}{B(t)}\right)&\geq
     \frac{m}{\omega_n}\cdot\frac{1}{1+\frac{a_\lambda R^n}
     {K^2}}\cdot\frac{a_\lambda}{K^2} 
     \\
     &= \frac{m}{\omega_n}
     \cdot\frac{a_\lambda}{K^2+a_\lambda R^n} = \sigma 
 \end{align*}
 for all $t\in(0,T)$ and all $s\in(B(t),K\sqrt{B(t)})$, 
 which means the end of proof. 
 \end{proof}

%
%

Thanks to above lemmas, we can see that if a function $B$  
is suitably small and nonincreasing and fulfills some differential 
inequality, then $\underline{w}$ becomes 
 a subsolution of 
\eqref{lem;base} in the 
intermediate region $(B(t),K\sqrt{B(t)})$.

First, we will consider the case $n=1$. The largeness 
condition for $m$ (see \eqref{condi;mass}) will lead 
 to 
the following lemma.

\begin{lem}\label{estimate1;intermediate}
 Let $p\leq q$, $n = 1$, $\chi > 0$ and $\lambda \in 
 (\frac{5-\sqrt{17}}{2},1]$, and put $\delta_\lambda := 
 \frac{a_\lambda}{b_\lambda}$ with $a_\lambda$ 
 and $b_\lambda$ as in \eqref{def;ab}, and let 
 $m_c(p,q,\chi,\lambda,R)$ be such that 
\begin{align}\label{condi;mc}
    m_c:=\inf \left\{m\  \Bigg|\  \exists\, \lambda \in 
 \left(\frac{5-\sqrt{17}}{2},1\right];\,\frac{(1-\delta_\lambda)m \chi}
 {\sqrt{\frac{1+\delta_\lambda}
    {\lambda^2}+ m^2}}-\left(\frac{m}{\omega_n}\cdot 
    \frac{a_\lambda}{(a_\lambda + b_\lambda)R}\right)^{p-q} =0\right\}.
\end{align}
 Then for all $m>m_c$ there exist $K > 0$, $\kappa_1 > 0$, 
 and $B_{01}$ such that $K\sqrt{B_{01}} < R^n$, 
 and for some $T > 0$, if $B\in C^1([0,T))$ 
 is a positive and nonincreasing function fulfilling \eqref{condi;B1} 
 as well as
  \begin{align}\label{condi;B5}
    \begin{cases}
    B'(t)\geq -\kappa_1\sqrt{B(t)},
    \\[1mm] 
    B(0)\leq B_{01}
    \end{cases}
  \end{align}
  for all $t\in(0,T)$, 
   then $w_{\rm in}$ as in \eqref{def;win} satisfies
  \[\label{ineq;Pwin4}
     (\mathcal{P}w_{\rm in})(s,t) \leq 0
  \]
for all $t\in(0,T)$ and all $s\in(B(t),K\sqrt{B(t)})$.
  \end{lem}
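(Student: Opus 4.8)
The plan is to follow the scheme already used for the outer and very inner regions. By Lemma~\ref{tool;inner} applied with $n=1$,
\begin{align*}
  (\mathcal{P}w_{\rm in})(s,t)=A'(t)\varphi(\xi)+\frac{A(t)\varphi'(\xi)}{B(t)}\bigl(-\xi B'(t)+J_1(s,t)+J_2(s,t)\bigr),\qquad \xi=\tfrac{s}{B(t)},
\end{align*}
and since $A'(t)\le0$ by \eqref{ineq;A}, $\varphi\ge0$ and $A(t)\varphi'(\xi)/B(t)>0$, it suffices to choose $K,\kappa_1,B_{01}$ so that $-\xi B'(t)+J_1+J_2\le0$ on the intermediate range $\xi\in(1,K/\sqrt{B(t)})$. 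The term $-\xi B'(t)$ is harmless: by \eqref{condi;B5} and $\xi<K/\sqrt{B(t)}$ one has $-\xi B'(t)\le\kappa_1\sqrt{B(t)}\,\xi\le\kappa_1 K$, so everything reduces to bounding $J_1+J_2$ above by a strictly negative constant that dominates $\kappa_1 K$.

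First I would bound the diffusive part from above by Lemma~\ref{tool1;intermediate}, which for $n=1$ reads $J_1(s,t)\le\bigl(A(t)\varphi'(\xi)/B(t)\bigr)^{p-1}$. For the chemotactic part I would note that by hypothesis $\lambda\in(\tfrac{5-\sqrt{17}}{2},1]$, which is precisely the range on which $\delta_\lambda=a_\lambda/b_\lambda$ lies in $(0,1)$; hence Lemma~\ref{tool3;intermediate} is applicable with $\delta=\delta_\lambda$ after shrinking $B_{01}$, giving $A(t)\varphi(\xi)-\tfrac{\mu}{n}B(t)\xi\ge(1-\delta_\lambda)A(t)\varphi(\xi)>0$. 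I would then fix $K=\sqrt{b_\lambda R^n}$, the smallest value allowed by the standing requirement $K\ge\sqrt{b_\lambda R^n}$; this is the choice for which the constant $1+a_\lambda R^n/K^2$ in Lemma~\ref{tool2;intermediate} becomes $1+\delta_\lambda$ and for which $\sigma$ from Lemma~\ref{tool4;intermediate} equals $\tfrac{m}{\omega_n}\cdot\tfrac{a_\lambda}{(a_\lambda+b_\lambda)R}$, i.e.\ the base of the power in \eqref{condi;mc}. Feeding into \eqref{def;J2} the bound for $A\varphi-\tfrac{\mu}{n}B\xi$, the lower bound for $A\varphi$ from Lemma~\ref{tool2;intermediate}, the upper bound $A\varphi\le\tfrac m{\omega_n}$ (from $A\le\tfrac m{\omega_n}$ and $\varphi<1$), the monotonicity and boundedness of $\tau\mapsto\tau/\sqrt{1+\tau^2}$, and $A\varphi'/B\ge\sigma$ from Lemma~\ref{tool4;intermediate}, I expect to reach
\begin{align*}
  -J_2(s,t)\ge\chi\cdot\frac{(1-\delta_\lambda)m}{\sqrt{\tfrac{1+\delta_\lambda}{\lambda^2}+m^2}}\cdot\Bigl(\frac{A(t)\varphi'(\xi)}{B(t)}\Bigr)^{q-1}
\end{align*}
up to a factor tending to $1$ as $B_{01}\to0$.

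With these two estimates I would factor out $(A\varphi'/B)^{p-1}$ and use $p\le q$ together with $A\varphi'/B\ge\sigma>0$ to obtain
\begin{align*}
  J_1+J_2\le\Bigl(\frac{A\varphi'}{B}\Bigr)^{p-1}\biggl[1-\chi\cdot\frac{(1-\delta_\lambda)m}{\sqrt{\tfrac{1+\delta_\lambda}{\lambda^2}+m^2}}\,\sigma^{q-p}\biggr].
\end{align*}
Here the hypothesis $m>m_c$ enters: by the very definition \eqref{condi;mc} of $m_c$ there is an admissible $\lambda$ for which the bracket is strictly negative, say $\le-\eta<0$; I fix such a $\lambda$ (hence $K=\sqrt{b_\lambda R^n}$, $\delta_\lambda$, $\sigma$) and take $B_{01}$ small enough that the $o(1)$ error above does not destroy the strict sign, whence $J_1+J_2\le-\eta(A\varphi'/B)^{p-1}\le-\eta\sigma^{p-1}$ since $p-1\ge0$. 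Choosing $0<\kappa_1\le\eta\sigma^{p-1}/K$ then gives $-\xi B'(t)+J_1+J_2\le\kappa_1K-\eta\sigma^{p-1}\le0$; the remaining bookkeeping conditions ($K\sqrt{B_{01}}<R^n$, \eqref{condi;B1}, and the smallness hypotheses of Lemmas~\ref{tool2;intermediate}--\ref{tool4;intermediate}) are all satisfied by further shrinking $B_{01}$.

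The main obstacle is the lower bound for $-J_2$. In the very inner region one could simply discard $J_2<0$ because $J_1<0$ there; here $J_1>0$, so the chemotactic gain must \emph{strictly} beat the diffusive loss, and the two quantities carry different powers of $A\varphi'/B$ (namely $q-1$ against $p-1$) while the chemotactic factor is damped by the saturating nonlinearity $\tau/\sqrt{1+\tau^2}$; moreover the value of $\xi$ maximizing $J_1$ need not be the one minimizing $-J_2$. Extracting a single uniform strict inequality therefore hinges on the simultaneous, quantitatively matched choices $K=\sqrt{b_\lambda R^n}$, $\delta=\delta_\lambda$ and $\lambda$ realizing \eqref{condi;mc}, together with $B_{01}$ small; this coordinated bookkeeping, rather than any isolated estimate, is the delicate point, and it is exactly why the precise expression for $m_c$ has the form it does.
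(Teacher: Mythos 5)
Your proposal is correct and follows essentially the same route as the paper: the reduction via Lemma \ref{tool;inner} and $A'\le 0$, the bound $J_1\le (A\varphi'/B)^{p-1}$ from Lemma \ref{tool1;intermediate}, the choice $\delta=\delta_\lambda$ in Lemmas \ref{tool2;intermediate}--\ref{tool3;intermediate} to get $-J_2\ge \frac{(1-\delta_\lambda)m\chi}{\sqrt{(1+\delta_\lambda)/\lambda^2+m^2}}(A\varphi'/B)^{q-1}$, and Lemma \ref{tool4;intermediate} with $p\le q$ to reduce everything to the positivity of the constant appearing in \eqref{condi;mc}. Your factoring by $(A\varphi'/B)^{p-1}$ instead of $(A\varphi'/B)^{q-1}$ and your pinning of $K$ at $\sqrt{b_\lambda R}$ yield the same $\kappa_1$ up to the identity $\eta\sigma^{p-1}=\sigma^{q-1}c_1$, so the argument is the paper's.
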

  \begin{proof}
The proof is based on that of \cite[Lemma 3.11]{BW}.
Since  $\delta_\lambda = \frac{a_\lambda}{b_\lambda}$ holds, 
we obtain $b_\lambda R = \frac{a_\lambda R}{\delta_\lambda}$ 
with $a_\lambda$ and $b_\lambda$ 
as in \eqref{def;ab}, 
which enables us to pick 
$K > 0$ such that
 \begin{align}\label{estimate2}
  K\geq\sqrt{b_\lambda R}
 \end{align}
and
 \begin{align}\label{estimate3}
  \frac{a_\lambda R}{K^2} \leq\delta_\lambda
 \end{align}
as well as
\begin{align}\label{def;c2}
   c_1 &:= \frac{(1-\delta_\lambda)m \chi}{\sqrt{\frac{1+
   \delta_\lambda}{\lambda^2}+ m^2}}-\left(\frac{m}
   {\omega_n}\cdot \frac{a_\lambda}{K^2 + a_\lambda R}
   \right)^{p-q}
   \\ \notag
   &\,=\frac{(1-\delta_\lambda)m \chi}{\sqrt
   {\frac{1+\delta_\lambda}{\lambda^2}+ m^2}}
   -\sigma^{p-q}
\end{align}
is positive (by \eqref{condi;mc} and $m>m_c$) with $\sigma$ 
as in \eqref{def;sigma}. 
Furthermore, it is possible to fix 
$B_{01}\in(0,1)$ fulfilling $K\sqrt{B_{01}} < R$ and 
 \begin{align}\label{condi;B_0_7}
    B_{01}\leq \frac{K^2}{4(a_\lambda+b_\lambda)^2}
 \end{align}
 as well as
  \begin{align}\label{condi;B_0_8}
    \frac{\mu}{nA_T}\cdot {\rm max}\left\{\frac{B_{01}}
    {\lambda},\,2K\sqrt{B_{01}}\right\} \leq \delta_\lambda
 \end{align}
 with $A_T$ as in \eqref{def;AT}, and put
 \begin{align}\label{def;kappa1}
    \kappa_1  := \frac{\sigma^{q-1} c_1}{K},
 \end{align}
and let $T  > 0$ and $B\in C^1([0,T))$ 
be positive and nonincreasing and such that
 \eqref{condi;B5} holds.
Then, recalling $A' \leq 0$ on $(0,T)$ (see Lemma \ref{tool;inner}), 
we infer from   
 \eqref{equ;Pwin1} that
 \begin{align}\label{ineq;Pwin6}
     \frac{B(t)}{A(t)\varphi'(\xi)}\cdot(\mathcal{P} 
     w_{\rm in})(s,t)
      \leq -\xi B'(t)+J_1(s,t)+J_2(s,t)
 \end{align}
for all $t\in(0,T)$ and all $s\in(B(t),K\sqrt{B(t)})$  
with $\xi =\frac{s}{B(t)}$ and $J_1$ and $J_2$ 
as in \eqref{def;J1} and \eqref{def;J2}. 
Here, according to Lemma \ref{tool1;intermediate}, 
we obtain from \eqref{estimate;J1} that
 \begin{align}\label{estimate;J11}
    J_1(s,t) \leq 
    \left(\frac{A(t)\varphi'(\xi)}{B(t)}\right)^{p-1}
 \end{align}
 for all $t\in(0,T)$ and all $s\in(B(t),K\sqrt{B(t)})$. Since \eqref{estimate2}, 
 \eqref{estimate3},  \eqref{condi;B_0_7} and \eqref{condi;B_0_8} hold, 
a combination of Lemmas \ref{tool2;intermediate} 
and \ref{tool3;intermediate} with an argument 
 similar to that in 
  the proof of \cite[Lemma 3.11]{BW} leads to that
 \[\label{estimate4}
     \sqrt{1+\left(A(t)\varphi(\xi)-\mu B(t)\xi\right)^2}
     \leq \sqrt{\frac{1+\delta_\lambda}{\lambda^2 m^2}+1}\cdot A(t)
     \varphi(\xi) 
 \]
 for all $t\in(0,T)$ 
and all $s\in(B(t),K\sqrt{B(t)})$. 
Therefore, we can see from \eqref{inequ5} 
that 
 \begin{align}\label{estimate;J2}
   -J_2(s,t) &= \chi \cdot\frac{A(t)\varphi(\xi)-
   \mu B(t)\xi}{\sqrt{1+(A(t)\varphi(\xi)-\mu B(t)
   \xi)^2}}\cdot\left(\frac{A(t)\varphi'(\xi)}{B(t)}\right)^{q-1}
   \\ \nonumber
   &\geq  \chi \cdot\frac{(1-\delta_\lambda)A(t)\varphi(\xi)}
   {\sqrt{\frac{1+\delta_\lambda}{\lambda^2m^2}+1}\cdot A(t)
   \varphi(\xi)}\cdot\left(\frac{A(t)\varphi'(\xi)}{B(t)}
   \right)^{q-1}
   \\ \nonumber
   &= \frac{(1-\delta_\lambda)m\chi}{\sqrt{\frac{1+\delta_\lambda}
   {\lambda^2}+m^2}}\cdot\left(\frac{A(t)\varphi'(\xi)}
   {B(t)}\right)^{q-1}
 \end{align}
for all $t\in(0,T)$ and  all $s\in(B(t),K\sqrt{B(t)})$. 
From the relation $p\leq q$, a combination of  \eqref{keyestimate3} and \eqref{def;kappa1}--\eqref{estimate;J2}, 
 along with 
 the definition of $c_1$ (see \eqref{def;c2}) 
implies that 
 \begin{align}\label{ineq;Pwin7}
     &\frac{B(t)}{A(t)\varphi'(\xi)}\cdot(\mathcal{P} 
     w_{\rm in})(s,t)
     \\ \nonumber
      &\leq -\xi B'(t)+\left(\frac{A(t)\varphi'(\xi)}{B(t)}
      \right)^{p-1}-\frac{(1-\delta_\lambda)m\chi}{\sqrt{\frac{1+\delta_\lambda}
      {\lambda^2}+m^2}}\cdot\left(\frac{A(t)\varphi'(\xi)}
      {B(t)}\right)^{q-1}
      \\ \nonumber
      &=\left(\frac{A(t)\varphi'(\xi)}{B(t)}\right)^{q-1}\Biggl
      \{-\xi B'(t)\left(\frac{A(t)\varphi'(\xi)}{B(t)}\right)^{1-q}
      +\left(\frac{A(t)\varphi'(\xi)}{B(t)}\right)^{p-q}-
      \frac{(1-\delta_\lambda)m\chi}{\sqrt{\frac{1+\delta_\lambda}
      {\lambda^2}+m^2}}\Biggr\}
      \\ \nonumber
      &\leq \left(\frac{A(t)\varphi'(\xi)}{B(t)}\right)^{q-1}
      \Biggl\{-\xi B'(t)\sigma^{1-q}+\sigma^{p-q}-
      \frac{(1-\delta_\lambda)m\chi}{\sqrt{\frac{1+\delta_\lambda}{
      \lambda^2}+m^2}}\Biggr\}
      \\ \nonumber
      &= \left(\frac{A(t)\varphi'(\xi)}{B(t)}\right)^{q-1}
      \sigma^{1-q}\left(-\xi B'(t)-\sigma^{q-1}c_1\right)
 \end{align}
for all $t\in(0,T)$ and all $s\in(B(t),K\sqrt{B(t)})$. 
Recalling that $\xi \leq \frac{K}{\sqrt{B(t)}}$ holds in the region, 
we infer from the definition of $\kappa_1$ (see \eqref{def;kappa1}) that
 \begin{align}\label{est5}
   -\xi B'(t)-\sigma^{q-1}c_1 &= \xi \cdot\left(-B'(t)
   -\frac{\sigma^{q-1}c_1}{\xi}\right)
   \\ \nonumber
   &\leq \xi \cdot\left(-B'(t)-\frac{\sigma^{q-1}c_1
   \sqrt{B(t)}}{K}\right)
   \\ \nonumber
   & = \xi \cdot\left(-B'(t)-\kappa_1\sqrt{B(t)}\right)
   \\ \nonumber
   &\leq 0
 \end{align}
for all $t\in(0,T)$ and all $s\in(B(t),K\sqrt{B(t)})$. 
A combination of  \eqref{ineq;Pwin7} and \eqref{est5} leads to 
that we can attain the conclusion of  the proof.
\end{proof}

%
%

Second, we will consider the case $n\geq 2$. 
Due to choosing 
suitably $\chi$ as in \eqref{condi;chi}, we will infer from 
the condition for a function $B$ that $\underline{w}$ 
becomes 
 a subsolution of \eqref{lem;base}.

\begin{lem}\label{estimate2;intermediate}
 Let $p\leq q$, $n \geq 2 $, $m > 0$, $\chi>0$ and $\lambda = \frac{1}{3}$. 
 Then for all $\chi > (\frac{mn}
 {\omega_n R^n})^{p-q}$ there exist $K > 0$, $\kappa_n > 0$, and $B_{0n}$ 
 such that $K\sqrt{B_{0n}} < R^n$, and such that if 
 $T > 0$ and $B\in C^1([0,T))$ is a positive and 
 nonincreasing such that
 \begin{align}\label{condi;B6}
    \begin{cases}
    B'(t)\geq -\kappa_n B^{1-\frac{1}{2n}}(t),
    \\[1mm] 
    B(0)\leq B_{0n}
    \end{cases}
  \end{align}
for all $t\in(0,T)$, 
then the function $w_{\rm in}$ defined in \eqref{def;win} 
satisfies
\[\label{ineq;Pwin5}
     (\mathcal{P}w_{\rm in})(s,t) \leq 0
  \]
  for all $t\in(0,T)$ and all $s\in(B(t),K\sqrt{B(t)})$.
\end{lem}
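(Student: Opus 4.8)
Following the pattern of the previous lemma (and of \cite[Lemma 3.12]{BW}), the plan is to combine the pointwise bounds for $J_1$ and $J_2$ on the intermediate region supplied by Lemmas \ref{tool1;intermediate}--\ref{tool4;intermediate} with the monotonicity $A'\le 0$ from Lemma \ref{tool;inner}; the hypothesis \eqref{condi;chi} enters at exactly one place, to fix the sign of a scalar bracket.

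First I would set $\lambda=\tfrac13$, so that $a_\lambda=\tfrac23$ and $b_\lambda=0$; then $K\ge\sqrt{b_\lambda R^n}$ holds automatically, and with $\sigma$ as in \eqref{def;sigma} we have $\sigma=\frac{m}{\omega_n}\cdot\frac{a_\lambda}{K^2+a_\lambda R^n}\to\frac{m}{\omega_n R^n}$ as $K\searrow 0$. Since $(\frac{m}{\omega_n R^n})^{p-q}=n^{p-q}(\frac{mn}{\omega_n R^n})^{p-q}$, the assumption \eqref{condi;chi} amounts to $(\frac{m}{\omega_n R^n})^{p-q}<n^{q-p}\chi$, so by continuity I may fix $K>0$ so small that $n^p\sigma^{p-q}<n^q\chi$, and then $\varepsilon>0$ so small that still $n^p\sigma^{p-q}<\frac{n^q\chi}{\sqrt{1+\varepsilon}}$; put $g:=\frac{n^q\chi}{\sqrt{1+\varepsilon}}-n^p\sigma^{p-q}>0$. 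Next I fix some $\delta\in(0,1)$ and choose $B_{0n}\in(0,1)$ small enough that $K\sqrt{B_{0n}}<R^n$, that \eqref{condi;B1} is satisfied for every $B$ taking values in $(0,B_{0n}]$, that the hypothesis of Lemma \ref{tool3;intermediate} holds with this $\delta$, and that $\frac{1}{(1-\delta)^2}\cdot\frac{\omega_n^2}{\lambda^2 m^2}(1+\frac{a_\lambda R^n}{K^2})K^{2-\frac2n}B_{0n}^{3-\frac3n}\le\varepsilon$; finally I define $\kappa_n:=g\,\sigma^{q-1}K^{-1/n}>0$.

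Given a function $B$ as in the statement, I would begin from \eqref{equ;Pwin1} and \eqref{ineq;A} to get, for all $t\in(0,T)$ and $s\in(B(t),K\sqrt{B(t)})$ with $\xi=s/B(t)$,
\[
\frac{B(t)}{A(t)\varphi'(\xi)}\,(\mathcal{P}w_{\rm in})(s,t)\le-\xi B'(t)+J_1(s,t)+J_2(s,t).
\]
Lemma \ref{tool1;intermediate} bounds $J_1$ above by $n^p B^{1-\frac1n}(t)\,\xi^{1-\frac1n}(\frac{A\varphi'}{B})^{p-1}$. For $J_2$ I would pull the factor $B^{\frac2n-2}\xi^{\frac2n-2}(A\varphi-\frac\mu n B\xi)^2$ out of the square root in \eqref{def;J2}, then use \eqref{inequ5}, \eqref{inequ4} and the choice of $B_{0n}$ to see that the leftover factor under the root is at most $\sqrt{1+\varepsilon}$, which yields $-J_2\ge\frac{n^q\chi}{\sqrt{1+\varepsilon}}B^{1-\frac1n}(t)\,\xi^{1-\frac1n}(\frac{A\varphi'}{B})^{q-1}$. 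Factoring the positive quantity $B^{1-\frac1n}(t)\,\xi^{1-\frac1n}(\frac{A\varphi'}{B})^{q-1}$ out of the resulting estimate, invoking $p\le q$ and $q\ge1$ together with \eqref{keyestimate3} to replace $(\frac{A\varphi'}{B})^{p-q}$ and $(\frac{A\varphi'}{B})^{1-q}$ by $\sigma^{p-q}$ and $\sigma^{1-q}$ respectively, and estimating the transport term via $-B'(t)\le\kappa_n B^{1-\frac1{2n}}(t)$ and $\xi\le K/\sqrt{B(t)}$ (so that $\frac{-\xi B'(t)}{B^{1-\frac1n}(t)\,\xi^{1-\frac1n}}(\frac{A\varphi'}{B})^{1-q}\le\kappa_n K^{1/n}\sigma^{1-q}$), I would be left with the scalar quantity $\kappa_n K^{1/n}\sigma^{1-q}+n^p\sigma^{p-q}-\frac{n^q\chi}{\sqrt{1+\varepsilon}}$, which is $0$ by the definition of $\kappa_n$. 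Since $A\varphi'>0$ and $B>0$, this gives $\mathcal{P}w_{\rm in}\le0$ on the intermediate region.

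The main difficulty here is the arrangement of the constants rather than any single estimate: in contrast with the quadratic construction of \cite{BW}, $K$ must be taken \emph{small} (so that $\sigma$ lies close to $\frac{m}{\omega_n R^n}$ from the relevant side), and one must keep the competing smallness demands on $B_{0n}$ — those stemming from \eqref{condi;B1}, from Lemma \ref{tool3;intermediate}, and from the denominator bound \eqref{inequ4} used to absorb the factor $\sqrt{1+\varepsilon}$ — simultaneously satisfiable while preserving the strict gap $g>0$ afforded by \eqref{condi;chi}.
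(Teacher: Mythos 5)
Your proposal is correct and follows essentially the same route as the paper: fix $\lambda=\frac13$, shrink $K$ so that \eqref{condi;chi} yields $n^q\chi>n^p\sigma^{p-q}$, bound $J_1$ by Lemma \ref{tool1;intermediate} and $-J_2$ from below via Lemmas \ref{tool2;intermediate}--\ref{tool3;intermediate}, then reduce everything with Lemma \ref{tool4;intermediate} and $\xi\le K/\sqrt{B(t)}$ to a scalar bracket killed by the choice of $\kappa_n$. The only (harmless) cosmetic difference is in the $J_2$ estimate, where you cancel the numerator exactly and control the residual $1$ under the root by $\varepsilon$, while the paper keeps the two factors $\frac{1-\delta}{\sqrt{1+\delta}}$ separately.
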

\begin{proof}
The proof is based on that of \cite[Lemma 3.12]{BW}.
Thanks to $\lambda = \frac{1}{3}$ and 
$\chi > (\frac{mn}{\omega_n R^n})^{p-q}$,  
we have that $b_\lambda=0$ and
\[
 \chi > \left(n \cdot \frac{m}{\omega_n }\cdot \frac
 {a_\lambda}{a_\lambda R^n}\right)^{p-q}
\] 
with $a_\lambda$ as in \eqref{def;ab}. 
Then there exists some $K > 0$ fulfilling
\begin{align}\label{estimate6}
 \chi &> \left(n \cdot \frac{m}{\omega_n }\cdot \frac
 {a_\lambda}{K^2 + a_\lambda R^n}\right)^{p-q}  
 \\ \nonumber
 &=(n\sigma)^{p-q}
\end{align}
with $\sigma$ given by \eqref{def;sigma}. 
Using \eqref{estimate6}, we can choose $\delta\in(0,1)$ 
suitably small such that
 \begin{align}\label{def;c3}
    c_1 := n^q\cdot\left\{\frac{(1-\delta)\chi}{\sqrt
    {1+\delta}}-(n\sigma)^{p-q}\right\}
 \end{align}
is positive. Finally, we take $B_{0n}\in(0,1)$ such that 
$K\sqrt{B_{0n}} <  R^n$ and
 \begin{align}\label{condi;B_0_9}
    B_{0n}\leq \frac{K^2}{4(a_\lambda+b_\lambda)^2}
 \end{align} 
as well as
   \begin{align}\label{condi;B_0_10}
    \frac{\mu}{nA_T}\cdot {\rm max}\left\{\frac{B_{0n}}
    {\lambda},\,2K\sqrt{B_{0n}}\right\} \leq \delta
 \end{align}
with $A_T$ as in \eqref{def;AT} and 
 \begin{align}\label{inequ6}
    \frac{\omega_n^2}{\lambda^2m^2}
    \cdot\left(1+\frac{a_\lambda R^n}{K^2}\right)
    \cdot K^{2-\frac{2}{n}}B_{0n}^{3-\frac{3}{n}}
    \leq\delta,
 \end{align}
 and we put
 \begin{align}\label{def;kappan}
    \kappa_n  := \sigma^{q-1} c_1 K^{-\frac{1}{n}},
 \end{align}
and suppose $T  > 0$ and $B\in C^1([0,T))$ 
is positive and nonincreasing and such that \eqref{condi;B6} holds.
Then, from
\eqref{condi;B_0_9},
Lemma \ref{tool;inner} 
implies 
that $A' \leq 0$ on $(0,T)$. Recalling \eqref{equ;Pwin1},
 we derive that
 \begin{align}\label{ineq;Pwin8}
     \frac{B(t)}{A(t)\varphi'(\xi)}\cdot(\mathcal{P}
      w_{\rm in})(s,t)
      \leq -\xi B'(t)+J_1(s,t)+J_2(s,t)
 \end{align} 
 for all $t\in(0,T)$ and all $s\in(B(t),K\sqrt{B(t)})$ 
 with $\xi = \frac{s}{B(t)}$, and $J_1$ and $J_2$ 
 given by \eqref{def;J1} and \eqref{def;J2}, 
 respectively. From \eqref{condi;B_0_9} 
 and \eqref{condi;B_0_10}, using Lemma \ref{tool3;intermediate}, 
 we obtain that 
  \begin{align}\label{inequ7}
    A(t)\varphi(\xi)-\frac{\mu}{n}B(t)
    \xi \geq (1-\delta)A(t)\varphi(\xi)
 \end{align}  
 for all $t\in(0,T)$ and all $s\in(B(t),K\sqrt{B(t)})$.
 Noticing \eqref{condi;B_0_9}, we can use  
 Lemma \ref{tool2;intermediate} to show from 
 \eqref{inequ6} that
 \begin{align}\label{inequ8}
    \frac{1}{A^2(t)B^{\frac{2}{n}-2}(t)\xi^{\frac{2}{n}-2}
    \varphi^2(\xi)}&\leq \frac{\omega_n^2}{\lambda^2m^2}
    \cdot\left(1+\frac{a_\lambda R^n}{K^2}\right)
    \cdot K^{2-\frac{2}{n}}B_{0n}^{3-\frac{3}{n}}
    \\ \nonumber
    &\leq\delta
 \end{align}
 for all $t\in(0,T)$ and all $s\in(B(t),K\sqrt{B(t)})$. 
 Employing \eqref{inequ7}, \eqref{inequ8} 
 and the fact that $\delta < 1$, we can estimate that
 \begin{align}\label{estimate;J22}
   -J_2(s,t) &= n^q\chi \cdot\frac{A(t)\varphi(\xi)-
   \frac{\mu}{n} B(t)\xi}{\sqrt{1+B^{\frac{2}{n}-2}(t)
   \xi^{\frac{2}{n}-2}(A(t)\varphi(\xi)-\frac{\mu}{n} B(t)
   \xi)^2}}\cdot\left(\frac{A(t)\varphi'(\xi)}{B(t)}\right)^{q-1}
   \\ \nonumber
   &\geq  n^q\chi \cdot\frac{(1-\delta)A(t)\varphi(\xi)}
   {\sqrt{ 1+B^{\frac{2}{n}-2}(t)\xi^{\frac{2}{n}-2}A^2(t)
   \varphi^2(\xi)}}\cdot\left(\frac{A(t)\varphi'(\xi)}
   {B(t)}\right)^{q-1}
   \\ \nonumber
   &\geq n^q\chi \cdot\frac{(1-\delta)A(t)\varphi(\xi)}
   {\sqrt{ (\delta+1)B^{\frac{2}{n}-2}(t)\xi^{\frac{2}{n}-2}
   A^2(t)\varphi^2(\xi)}}\cdot\left(\frac{A(t)\varphi'(\xi)}
   {B(t)}\right)^{q-1}
   \\ \nonumber
   & = \frac{(1-\delta)n^q\chi}{\sqrt{\delta+1}}\cdot
    B^{1-\frac{1}{n}}(t)\xi^{1-\frac{1}{n}}\cdot
    \left(\frac{A(t)\varphi'(\xi)}{B(t)}\right)^{q-1}
 \end{align}
for all $t\in(0,T)$ and all $s\in(B(t),K\sqrt{B(t)})$. 
On the other hand, we use Lemma \ref{tool1;intermediate} to
 show that
 \begin{align}\label{estimate;J12}
    J_1(s,t) \leq n^p B^{1-\frac{1}{n}}(t)\xi^{1-\frac{1}{n}}
    \left(\frac{A(t)\varphi'(\xi)}{B(t)}\right)^{p-1}
 \end{align}
 for all $t\in(0,T)$ and all $s\in(B(t),K\sqrt{B(t)})$. Thanks to \eqref{ineq;Pwin8}, \eqref{estimate;J22} and \eqref{estimate;J12}, we derive that
  \begin{align}\label{ineq;Pwin9}
     \frac{B(t)}{A(t)\varphi'(\xi)}\cdot(\mathcal{P} 
     w_{\rm in})(s,t)&\leq -\xi B'(t)+n^p B^{1-\frac{1}{n}}(t)
      \xi^{1-\frac{1}{n}}\left(\frac{A(t)\varphi'(\xi)}
      {B(t)}\right)^{p-1}
      \\ \nonumber
      &\quad\,-\frac{(1-\delta)n^q\chi}{\sqrt{\delta+1}}
            \cdot B^{1-\frac{1}{n}}(t)\xi^{1-\frac{1}{n}} 
      \left(\frac{A(t)\varphi'(\xi)}{B(t)}\right)^{q-1}.
   \end{align}
 Since
   \begin{align*}
     & -\xi B'(t)+n^p B^{1-\frac{1}{n}}(t)
      \xi^{1-\frac{1}{n}}\left(\frac{A(t)\varphi'(\xi)}
      {B(t)}\right)^{p-1}-\frac{(1-\delta)n^q\chi}{\sqrt{\delta+1}}
            \cdot B^{1-\frac{1}{n}}(t)\xi^{1-\frac{1}{n}} 
      \left(\frac{A(t)\varphi'(\xi)}{B(t)}\right)^{q-1}
     \\ \nonumber
     &=\left(\frac{A(t)\varphi'(\xi)}{B(t)}\right)^{q-1}
      \Biggl\{-\xi B'(t)\left(\frac{A(t)\varphi'(\xi)}{B(t)}
      \right)^{1-q}
      \\ \nonumber
      &\quad\,+\left(n^p\left(\frac{A(t)\varphi'(\xi)}{B(t)}
      \right)^{p-q}-\frac{(1-\delta)n^q\chi}{\sqrt
      {\delta+1}}\right) B^{1-\frac{1}{n}}(t)
      \xi^{1-\frac{1}{n}}\Biggr\}
     \end{align*}   
 holds, a combination of \eqref{def;c3}, \eqref{ineq;Pwin9} and  
 Lemma  \ref{tool4;intermediate}, along with the relation $p\leq q$ yields that
   \begin{align}\label{est6}
      &\frac{B(t)}{A(t)\varphi'(\xi)}\cdot(\mathcal{P} 
     w_{\rm in})(s,t)
     \\ \nonumber
     &\leq \left(\frac{A(t)\varphi'(\xi)}{B(t)}\right)^{q-1}
      \Biggl\{-\xi B'(t)\sigma^{1-q}+\left(n^p\sigma^{p-q}-
      \frac{(1-\delta)n^q\chi}{\sqrt{\delta+1}}\right)B^{1-
      \frac{1}{n}}(t)\xi^{1-\frac{1}{n}}\Biggr\}
      \\ \nonumber
      &=  \left(\frac{A(t)\varphi'(\xi)}{B(t)}\right)^{q-1}
      \sigma^{1-q}\Biggl\{-\xi B'(t)-n^q\sigma^{q-1}
      \Biggl(\frac{(1-\delta)\chi}{\sqrt{\delta+1}}
      -(n\sigma)^{p-q}\Biggr)B^{1-\frac{1}{n}}(t)
      \xi^{1-\frac{1}{n}}\Biggr\}
      \\ \nonumber
      &= \left(\frac{A(t)\varphi'(\xi)}{B(t)}\right)^{q-1}
      \sigma^{1-q}\left(-\xi B'(t)-\sigma^{q-1}c_1 
      B^{1-\frac{1}{n}}(t)\xi^{1-\frac{1}{n}}\right)
 \end{align}
 for all $t\in(0,T)$ and all $s\in(B(t),K\sqrt{B(t)})$. 
 Finally, according to that 
 \[
 \xi < \frac{K}{\sqrt{B(t)}}
 \] 
 which means that $s < K\sqrt{B(t)}$, 
 we infer from \eqref {condi;B6} and \eqref{def;kappan} that
 \begin{align}\label{est7}
   -\xi B'(t)-\sigma^{q-1}c_1 B^{1-\frac{1}{n}}(t)
   \xi^{1-\frac{1}{n}}&= \xi \cdot\left(-B'(t)-\sigma^{q-1}
   c_1 B^{1-\frac{1}{n}}(t)\xi^{-\frac{1}{n}}\right)
   \\ \nonumber
   &\leq \xi \cdot\left(-B'(t)-\sigma^{q-1}c_1 
   B^{1-\frac{1}{n}}(t)\cdot K^{-\frac{1}{n}}
   B^{\frac{1}{2n}}(t)\right)
   \\ \nonumber
   & = \xi \cdot\left(-B'(t)-\kappa_n B^{1-\frac{1}{2n}}
   (t)\right)
   \\ \nonumber
   &\leq 0
 \end{align}
 for all $t\in(0,T)$ and all $s\in(B(t),K\sqrt{B(t)})$. Thus 
 \eqref {est6} and \eqref{est7} 
 lead to the end of the proof.
 \end{proof}
\section{Blow-up. Proof of Theorem 1.1}\label{Sec4}

%
%

In this section, by virtue of a combination of 
Lemmas \ref{estimate;outer}, \ref{estimate;very inner}, 
\ref{estimate2;veryinner}, \ref{estimate1;intermediate} 
and \ref{estimate2;intermediate}, we can show our main 
purpose such that there is an initial data satisfying 
that the corresponding solution blows up by 
 using 
the comparison argument from \cite[Lemma 5.1]{BW}.

 \begin{proof}[{\bf Proof of Theorem \ref{mainthm1}}]
If $n=1$, 
thanks to \eqref{condi;mass}, for all $\chi > 1$ 
($\chi>0$ when $q>p$) and some $\lambda \in 
 (\frac{5-\sqrt{17}}{2},1)$, Lemma \ref{estimate1;intermediate} 
 entails that there exist $K>0$, $\kappa_1 >0$
 and $B_{01}\in (0,1)$ with properties listed there.
 On the other hand, if $n\geq 2$, $m > 0$ and $\lambda=\frac{1}{3}$, 
 then from the condition 
 \eqref{condi;chi} we can use 
 Lemma \ref{estimate2;intermediate} to see that for all 
 $\chi>0$ there exist $K>1, \kappa_n >0$ 
 and $B_{0n}\in (0,1)$ with properties noted there.
  Moreover, we 
  introduce $\kappa \in (0,\kappa_n]$ given by
 \begin{align}\label{condi;kappa1}
          \kappa\leq\frac{a_\lambda^{q-1}(nm)^q\chi K}
          {2(a_\lambda+b_\lambda)
          (K^2+a_\lambda R^n)^{q-1}\omega_n^qR^n\sqrt{1+K^{\frac{2}{n}-2}
          \cdot \frac{m^2}{\omega_n^2}}}
 \end{align}
and
 \begin{align}\label{condi;kappa2}
          \kappa\leq\frac{d}{\sqrt{d^2+1}}\cdot 
          \left(\dfrac{2\lambda A_T}{e^d}\right)^{p-1}
 \end{align}
as well as
 \begin{align}\label{condi;kappa3}
          \kappa\leq\frac{n^q \chi }{\sqrt{2}}\cdot
          \left(\dfrac{2\lambda A_T}{e^d}\right)^{q-1}
 \end{align}
with $d$ and $A_T$ as in \eqref{def;d} and \eqref{def;AT}, 
respectively, and we take $B_0\in(0,B_{0n}]$ satisfying 
\eqref{condi;B2}, \eqref{condi;B_0_2} and \eqref{condi;B7}.
Here we define
 \[\label{def;B}
         B(t) := \left(B_0^{\frac{1}{2n}}-\frac{\kappa}{2n}
         t\right)^{2n}
 \]
for $t\in(0,T)$ 
 with
 \[\label{def;T}
         T := \frac{2n}{\kappa}\cdot B_0^{\frac{1}{2n}}.
 \]
 Then, $B\in C^1([0,T))$ 
 is the solution of the following 
 differential equation\/{\rm :}
\begin{align}\label{condi;B}
    	\begin{cases}
          B'(t)= -\kappa B^{1-\frac{1}{2n}}(t),
         \\[1mm] 
          B(0)= B_0
         \end{cases}
 \end{align}
 for all $t\in(0,T).$ 
  According to Lemma \ref{def;subsolution}, putting
 \[
         \underline{w}(s,t) :=
           \begin{cases}
              w_{\rm in}(s,t)
              &\mbox{if}\ t\in[0,T)\ \mbox{and}\ 
              s\in[0,K\sqrt{B(t)}],
              \\[1mm] 
              w_{\rm out}(s,t)
              &\mbox{if}\ t\in[0,T)\ \mbox{and}\ 
              s\in(K\sqrt{B(t)},R^n]
           \end{cases}
 \]
with functions $w_{\rm in}$ and $w_{\rm out}$ as in 
\eqref{def;win} and \eqref{def;wout}, respectively, 
we see that the function $\underline{w}$ is well-defined and satisfies 
\[
\underline{w}\in C^1([0,R^n]\times[0,T))
\]
and $\underline{w}(\cdot,t)\in C^2([0,R^n]
\setminus \{B(t),K\sqrt{B(t)}\})$
for all $t\in[0,T)$. Moreover, thanks to \eqref{condi;kappa1}, \eqref{condi;kappa2}, 
\eqref{condi;kappa3}, \eqref{condi;B} and the fact that $B(t)\leq B_0 <1$, 
we can use Lemmas \ref{estimate;outer}, 
\ref{estimate;very inner}, \ref{estimate2;veryinner}, 
\ref{estimate1;intermediate} and \ref{estimate2;intermediate} 
to lead to functions $w_{\rm out}$ and $w_{\rm in}$ fulfilling
 \begin{align}\label{Pwout<0}
         (\mathcal{P}w_{\rm out})(s,t) \leq 0
 \end{align}
for all $t\in(0,T)$ and all $s\in(K\sqrt{B(t)},R^n)$ as well as
 \begin{align}\label{Pwin<0}
         (\mathcal{P}w_{\rm in})(s,t) \leq 0
 \end{align}
for all $t\in(0,T)$ and all $s\in(0,B(t))\cup(B(t),K\sqrt{B(t)})$.
Therefore, we obtain that
 \[
          (\mathcal{P}\underline{w})(s,t) \leq 0
 \]
for all $t \in(0,T)$ and all $s\in(0,R^n)\setminus\{B(t),K\sqrt{B(t)}\}$ 
since \eqref{Pwout<0} and \eqref{Pwin<0} hold. Here 
we assume that $u_0$ satisfies  \eqref{condi;ini1} and also
 \[
          \int_{B_r(0)}{u_0(x)\,dx}\geq M_m(r) := 
          \omega_n\underline{w}(r^n,0)
 \]
for all $r\in[0,R]$. Then, we can see that
 \[
         w(s,0)\geq\underline{w}(s,0)
 \]
 for all $s\in(0,R^n)$ with the solution $w$ of \eqref{lem;base} 
 defined by \eqref{def;w}. Moreover, putting $\widetilde{T} := 
\mbox{min}\{\tmax,T\}$, we derive that
 \[
         w(0,t)=\underline{w}(0,t)=0\quad \mbox{and}\quad        
         w(R^n,t)=\underline{w}(R^n,t)=\frac{m}{\omega_n}
 \]
for all $t\in(0,\widetilde{T})$. 
In order to use the 
comparison principle stated in \cite[Lemma 5.1]{BW}
we write $\alpha := 2-\frac{2}{n}\geq 0$ and let
 \[
     \phi(s,t,y_0,y_1,y_2) := n^{p+1}\cdot
     \frac{s^{\alpha}{y_1}^p
     y_2}
     {\sqrt{y_1^2+n^2s^{\alpha}
     y_2^2}}+n^q\chi\cdot
     \frac{(y_0-\frac{\mu}{n}s){y_1}^q}
     {\sqrt{1+s^{-\alpha}
     (y_0-\frac{\mu}{n}s)^2}}
 \]
for $(s,t,y_0,y_1,y_2)\in G := (0,R^n)\times(0,\infty)\times 
\mathbb{R}\times(0,\infty)\times\mathbb{R}$. Then  
$\phi\in C^1(G)$ with
 \begin{align}\label{5.1}
        \frac{\pa\phi}{\pa y_2}(s,t,y_0,y_1,y_2) 
        = \frac{n^{p+1}s^{\alpha}y_1^{p+2}}
        {{\sqrt{y_1^2+n^2s^\alpha y_2^2}}^3}\geq 0
 \end{align}
 and
 \begin{align*}
         \frac{\pa\phi}{\pa y_1}(s,t,y_0,y_1,y_2) &= n^{p+1}
         \cdot\frac{ps^{\alpha}y_1^{p-1}
         y_2(y_1^2+n^2s^{\alpha}y_2^2)-s^{\alpha}
         y_1^{p+1}y_2}{{\sqrt{y_1^2+n^2s^\alpha y_2^2}}^3}
         \\
        &\quad\,+n^q\chi\cdot\frac{q(y_0-\frac{\mu}{n}
        s)y_1^{q-1}}{\sqrt{1+s^{-\alpha}(y_0-\frac{\mu}{n}
        s)^2}}
 \end{align*}
 as well as
 \begin{align*}
        \frac{\pa\phi}{\pa y_0}(s,t,y_0,y_1,y_2) = n^q\chi
        \frac{y_1^q}{{\sqrt{1+s^{-\alpha}(y_0-\frac{\mu}
        {n}s)^2}}^3}
 \end{align*}
for all $(s,t,y_0,y_1,y_2)\in G $. 
Here if $S\subset(0,R^n)
\times \mathbb{R}\times(0,\infty)\times\mathbb{R}$ 
is compact, then there exist $ a_1,a_2 \in(0,\infty)$ and 
$\ b_1,b_2\in\mathbb{R}$ 
 satisfying that
 \[
        S\subset(0,R^n)\times \mathbb{R}\times(a_1,a_2)
        \times(b_1,b_2).
 \]
Therefore, putting $b := \mbox{max}\{|b_1|,|b_2|\}$ and 
$a := \mbox{max}\{a_1^{p-2},a_2^{p-2}\}$, 
we establish that for any $t\in (0,\widetilde{T})$,
 \begin{align}\label{5.3}
         \left|\frac{\pa\phi}{\pa y_1}(s,t,y_0,y_1,y_2)\right| 
         &\leq \left|n^{p+1}
         \cdot\frac{ps^{\alpha}y_1^{p-1}
         y_2(y_1^2+n^2s^{\alpha}y_2^2)-s^{\alpha}
         y_1^{p+1}y_2}{{\sqrt{y_1^2+n^2s^\alpha y_2^2}}^3}\right|
         \\ \nonumber
        &\quad\,+\left|n^q\chi\cdot\frac{q(y_0-\frac{\mu}{n}
        s)y_1^{q-1}}{\sqrt{1+s^{-\alpha}(y_0-\frac{\mu}{n}
        s)^2}}\right|
        \\ \nonumber
        &\leq \left|n^{p+1}\cdot\frac{(p-1)s^{\alpha}y_1^{p
         +1}y_2}{{\sqrt{y_1^2}}^3}\right|+\left|n^{p+3}\cdot
         \frac{ps^{2\alpha} y_1^{p-1}y_2^3}{{\sqrt{n^2s^\alpha y_2^2}}
         ^3}\right|
         \\ \nonumber
         &\quad\,+\left|n^q\chi qs^{\frac{\alpha}{2}}
         y_1^{q-1}\cdot\frac{\sqrt{s^{-\alpha}(y_0-\frac{\mu}{n}
         s)^2}}{\sqrt{1+s^{-\alpha}(y_0-\frac{\mu}{n}s)^2}}
         \right|
         \\ \nonumber
        &\leq \left|n^{p+1}(p-1)s^{\alpha}y_1^{p
         -2}y_2\right|+\left|n^p
         ps^\alpha y_1^{p-1}\right|+\left|n^q\chi qs^{\frac{\alpha}{2}}
         y_1^{q-1}
         \right|
         \\ \nonumber
        &\leq \left|n^{p+1}(p-1)R^{n\alpha}a^{p
         -2}b\right|+\left|n^p
         pR^{n\alpha} a_2^{p-1}\right|+\left|n^q\chi qR^{\frac{n\alpha}{2}}
         a_2^{q-1}
         \right|
 \end{align}
for all $(s,y_0,y_1,y_2)\in S$, and thus 
$|\frac{\pa\phi}{\pa y_1}
(\cdot,t,\cdot,\cdot,\cdot)|\in L_{\rm loc}^\infty((0,R^n)
\times\mathbb{R}\times(0,\infty)\times\mathbb{R})$, 
and for all $T_0 \in (0,\widetilde{T})$ and all $\Lambda>0$ 
we derive that 
 \begin{align}\label{5.2}
          \left|\frac{\pa\phi}{\pa y_0}(s,t,y_0,y_1,y_2)
          \right| \leq n^q\chi y_1^q \leq n^q\chi \Lambda^q
 \end{align}
for all $(s,t,y_0,y_1,y_2)\in G $ with $t\in(0,T_0)$ and $y_1\in(0,\Lambda)$.  
Thanks to \eqref{5.1}, \eqref{5.3} and \eqref{5.2}, 
we can use \cite[Lemma 5.1]{BW} to yield that
 \[
       w(s,t)\geq \underline{w}(s,t)
 \]
for all $s\in[0,R^n]$ and all $t\in[0,\widetilde{T})$. Since 
$w(0,t) = \underline{w}(0,t) = 0$ for all $t\in[0,\widetilde{T})$, 
the mean value theorem  
implies 
that for each $t\in[0,\widetilde{T})$ 
there is some $\theta(t)\in(0,R^n)$ with 
the property that
 \[
       w_s(\theta(t),t)=\frac{w(B(t),t)}{B(t)}\geq 
       \frac{\underline{w}(B(t),t)}{B(t)}=\frac{A(t)\varphi(1)}
       {B(t)}=\lambda\cdot \frac{A(t)}{B(t)}
 \]
for all $t\in[0,\widetilde{T})$. Noting 
that $u(r,t) = nw_s(r^n,t)$ 
for all $r\in (0,R)$ and all $t\in(0,\widetilde{T})$, we can show that
 \[
       \sup_{r\in(0,R)}{u(r,t)}\geq w_s(\theta(t),t)=\lambda
       \cdot \frac{A(t)}{B(t)}
 \]
for all $t\in(0,\widetilde{T})$. Thanks to the facts that 
\[
       B(t)\searrow 0\quad \mbox{and} \quad A(t)\to 
       \frac{m}{\omega_n}
       \cdot\frac{K^2}{K^2+a_\lambda R^n}
\]
as $t\nearrow T$ by \eqref{def;A}, using a consequence 
of the extensibility 
criterion \eqref{local}, we can see 
that \eqref{blow-up} holds with $T^\ast=\tmax \leq T < \infty$,
 which enables us to attain 
Theorem \ref{mainthm1}.
 \end{proof}  

\newpage

\end{document}